\DeclarePairedDelimiter\floor{\lfloor}{\rfloor}
\theoremstyle{definition}
\newtheorem{Def}{Definition}[section]
\newtheorem{ex}[Def]{Example}
\newtheorem{cns}[Def]{Construction}
\newtheorem{rem}[Def]{Remark}
\theoremstyle{plain}
\newtheorem{prop}[Def]{Proposition}
\newtheorem{thm}[Def]{Theorem}
\newtheorem*{thm*}{Theorem}
\newtheorem{lem}[Def]{Lemma}
\newtheorem{cor}[Def]{Corollary}
\newtheorem*{cor*}{Corollary}
\newtheorem*{con*}{Conjecture}
\newtheorem*{frag*}{Question}
\newtheorem*{verm*}{Vermutung}
\newcommand{\Sym}{\operatorname{Sym}}
\newcommand{\Hom}{\operatorname{Hom}} 
\newcommand{\End}{\operatorname{End}}
\newcommand{\GL}{\operatorname{{\mathbf GL}}}
\newcommand{\tr}{\operatorname{tr}}
\newcommand{\diag}{\operatorname{diag}}
\newcommand{\De}{\operatorname{D}}
\newcommand{\Or}{\operatorname{O}}
\newcommand{\Ma}{\operatorname{Ma}}
\newcommand{\Min}{\operatorname{Min}}
\newcommand{\cL}{{\mathcal L}}
\newcommand{\cO}{{\mathcal O}}
\newcommand{\fS}{{\mathfrak S}}
\newcommand{\R}{{\mathbb R}}
\newcommand\xqed[1]{%
  \leavevmode\unskip\penalty9999 \hbox{}\nobreak\hfill
  \quad\hbox{#1}}
\newcommand\demo{\xqed{$\triangle$}}
\DeclareMathOperator*{\Co}{C}
\title[Spectral LMI]{Spectral linear matrix inequalities}
\author{Mario Kummer}
\address{Technische Universit\"at Dresden, Fakult\"at Mathematik, Institut f\"ur Geometrie, Zellescher Weg 12-14, 01062 Dresden, Germany}
\email{mario.kummer@tu-dresden.de}
\thanks{The author has been supported by the DFG under Grant No.421473641.}
\newcommand{\comment}[1]{}
\begin{document}

\subjclass[2010]{Primary: 15A39, 90C22; Secondary: 20C30, 14P10}

\begin{abstract}
 We prove, under a certain representation theoretic assumption, that the set of real symmetric matrices, whose eigenvalues satisfy a linear matrix inequality, is itself a spectrahedron. The main application is that derivative relaxations of the positive semidefinite cone are spectrahedra. From this we further deduce statements on their Wronskians. These imply that Newton's inequalities, as well as a strengthening of the correlation inequalities for hyperbolic polynomials, can be expressed as sums of squares.
\end{abstract}
\maketitle

\section{Introduction}
A homogeneous polynomial $h \in \R[x_1,\ldots,x_n]$ is said to be \textit{hyperbolic} with respect to
$e \in \R^{n}$, if $h(e)>0$ and if for every 
$a \in \R^{n}$ the univariate polynomial $h(t e -a)$ in $t$ has only real roots.
The \textit{hyperbolicity cone} $\Co (h,e)$ of $h$ at $e$ is the set of all $a \in \R^{n}$ such all zeros of 
$h(t e -a)$ are nonnegative. Hyperbolicity cones are closed convex cones by \cite{Gar}. An instructive example of a polynomial that is hyperbolic with respect to $e$ is given by $\det A(x)$ where $$A(x):=x_1A_1+\ldots+x_nA_n$$ for real symmetric matrices $A_i$ with the property that $A(e)$ is positive definite. In this case, the hyperbolicity cone is defined by a {linear matrix inequality} (LMI): $$\Co(\det A(x),e)=\{a\in\R^n:\,A(a)\textrm{ is positive semidefinite}\}.$$ Such sets are called \emph{spectrahedral cones}. A major open problem in this context is:

\begin{con*}[Generalized Lax Conjecture]\label{con:glc}
 Hyperbolicity cones are spectrahedral.
\end{con*}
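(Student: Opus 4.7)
The Generalized Lax Conjecture is a major open problem, so rather than claim a proof I sketch the approach most consistent with the tools developed in the paper and identify the point where the argument is blocked.

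The plan is to reduce the problem to finding, for each hyperbolic polynomial $h$ with respect to $e$, a \emph{multiplier} $q$ --- itself hyperbolic with $\Co(q,e) \supseteq \Co(h,e)$ --- such that the product $q\cdot h$ admits a definite determinantal representation $q\cdot h = \det A(x)$ with $A(e) \succ 0$. Granted such a $q$, one would have
$$\Co(h,e) \;=\; \Co(q,e) \,\cap\, \{a\in\R^n : A(a)\succeq 0\},$$
reducing the problem for $h$ to the problem for $q$, whose hyperbolicity cone is strictly larger and therefore ``closer'' to a halfspace. The base case $n \leq 3$ is settled by Helton--Vinnik, so the intended induction is not on the number of variables $n$ but on some complexity invariant of the cone (e.g.\ the codimension of the boundary singular locus, or the degree of the minimal defining polynomial of a face).

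The strategy most in the spirit of this paper is to produce $q$ as a \emph{spectral} object: starting from a linear map $x \mapsto M(x)$ into symmetric matrices, one would take $q(x) = \det p(M(x))$ for an auxiliary hyperbolic $p$, so that the condition ``$q(x)\geq 0$'' becomes ``the eigenvalues of $M(x)$ satisfy an LMI.'' The main theorem of the paper then delivers spectrahedrality of the cone cut out by $q$, and one would hope to arrange divisibility $h \mid q$ and the cone inclusion $\Co(q,e)\supseteq\Co(h,e)$ by suitable invariant-theoretic choices of $M$ and $p$. The derivative relaxations of the PSD cone, treated in this paper, fit this template with $M(x)$ the embedding of $\R^n$ as the diagonal and $p$ an elementary symmetric polynomial.

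The main obstacle is precisely the construction of the multiplier $q$ in general: no universal recipe is known, and even for the elementary symmetric polynomials $e_k(x_1,\ldots,x_n)$ with $k<n$, spectrahedrality was obtained only through the derivative relaxation route indicated in the abstract. A complete proof would apparently require either a genuine extension of Helton--Vinnik beyond three variables (giving determinantal representations of powers $h^N$, say, with control on $\Co(h^N,e)=\Co(h,e)$), or a new operator-theoretic realization of an arbitrary hyperbolicity cone as a spectral LMI set --- at which point the main theorem of this paper would close the argument.
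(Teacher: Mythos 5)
You are correct that this statement is a conjecture, not a theorem, and the paper treats it exactly the same way: it states the Generalized Lax Conjecture as open and proves only special cases of it (most notably that the hyperbolicity cones of Renegar derivatives $\De_e^k \det A(x)$ are spectrahedral). Your acknowledgement that no proof is available is therefore the right answer, and your identification of where the argument is blocked is accurate.

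Two remarks on the sketch itself. First, the multiplier reformulation you state --- finding a hyperbolic $q$ with $\Co(q,e)\supseteq\Co(h,e)$ and $qh=\det A(x)$, so that $\Co(h,e)=\Co(q,e)\cap\{a:A(a)\succeq 0\}$ --- is indeed a standard, well-known equivalent form of the conjecture and matches how the paper's own \Cref{thm:branden} (Br\"and\'en's LMI for $\sigma_{d,n}$) is used: there $\det B(x)$ is $\sigma_{d,n}$ times a hyperbolic cofactor whose cone contains $\Co(\sigma_{d,n},e)$. Second, however, the inductive scheme you propose --- inducting on a ``complexity invariant of the cone'' such as the codimension of the singular locus of the boundary --- does not correspond to any known terminating induction; there is no guarantee that $\Co(q,e)$ being strictly larger forces a decrease in any such invariant, and in particular the Helton--Vinnikov base case gives no handle on proceeding beyond three variables. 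It is cleaner to say simply that the conjecture is open and that the paper's contribution is the representation-theoretic criterion (\Cref{thm:main}) plus its application to the symmetric LMIs of \cite{BranEle}, rather than to suggest an induction that is not actually available.
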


There is positive \cite{hv,detbez,matching} and negative \cite{Bra11,amini,lowerbounds} evidence for this conjecture. A direct application of Rolle's theorem shows that $$\De^k_eh:=\left(\sum_{i=1}^n e_i\cdot \frac{\partial }{\partial x_i}\right)^k h$$ is hyperbolic with respect to $e$ for all $k\leq\deg(h)$ if $h$ is. These hyperbolic polynomials are often called \emph{Renegar derivatives} as their geometric properties were first studied by Renegar \cite{renegar}. The Generalized Lax Conjecture would imply in particular that the hyperbolicity cone $\Co(\De^k_e\det A(x),e)$ is spectrahedral. In the case when $A(x)$ is a diagonal matrix, this was shown by Br\"and\'en \cite{BranEle} after Sanyal \cite{pol1} proved the case $k=1$ relying on results from \cite{matroids}. The latter was used by Saunderson \cite{firstderi} to solve the case $k=1$ for possibly nondiagonal $A(x)$. We will generalize this result to arbitrary $k$.

\begin{thm*}
 The hyperbolicity cone $\Co(\De^k_e\det A(x),e)$ is spectrahedral. The size of this spectrahedral representation is $\cO(d^{2k+2})$ when the size $d$ of $A(x)$ grows.
\end{thm*}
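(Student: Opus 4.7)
The plan is to reduce the statement to the universal case of the determinant pencil on $\Sym_d(\R)$ and then invoke the main spectral LMI theorem announced in the abstract. After rescaling we may assume $A(e) = I$, so that
\[
\De^k_e \det A(te-a) \;=\; \frac{d^k}{dt^k}\det(tI - A(a)).
\]
The linear map $\phi \colon a \mapsto A(a)$ from $\R^n$ into $\Sym_d(\R)$ then identifies $\Co(\De^k_e \det A(x), e)$ with the preimage of $\Co(\De^k_I \det X, I)$, and since preimages of spectrahedra under linear maps are again spectrahedra, it suffices to prove that the $k$-th derivative relaxation of the positive semidefinite cone is spectrahedral of the claimed size.

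A symmetric matrix $M$ lies in $\Co(\De^k_I \det X, I)$ precisely when the vector of its eigenvalues lies in the permutation-symmetric cone
\[
C_k \;:=\; \Bigl\{\lambda \in \R^d : \frac{d^k}{dt^k}\prod_{i=1}^d (t-\lambda_i) \text{ has only nonnegative roots}\Bigr\}.
\]
Two steps remain. First, I would realise $C_k$ as a spectrahedron via an LMI of size polynomial in $d$ that is equivariant under the coordinate-permuting action of $S_d$: Br\"and\'en's spectrahedral representation of the diagonal case cited in the introduction provides exactly this, and it is naturally $S_d$-equivariant by construction. Second, I would apply the main spectral LMI theorem: under the representation-theoretic assumption, the set of real symmetric $d \times d$ matrices whose spectrum lies in a cone cut out by such an equivariant LMI is itself a spectrahedron, with size controlled by that of the original LMI on $\R^d$ multiplied by a factor linear in $d$.

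The main obstacle is the second step, which is the new contribution of the paper: one must verify that Br\"and\'en's $S_d$-equivariant LMI for $C_k$ actually satisfies the representation-theoretic hypothesis, by identifying the $S_d$-representation carried by the underlying matrix pencil and checking that it belongs to the class admitted by the spectral construction. Once this compatibility is in place, the asymptotic bound $\cO(d^{2k+2})$ follows by tracking sizes through the reduction: an $\cO(d^{2k+1})$ representation of $C_k$ combined with the factor of $d$ introduced by the spectral lifting to $\Sym_d(\R)$.
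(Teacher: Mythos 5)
Your reduction to the universal case is exactly the paper's route: after normalizing $A(e)=I$, one identifies $\Co(\De^k_e\det A(x),e)$ with the pullback of $\Lambda(\Co(\sigma_{d-k,d},e))$ along $a\mapsto A(a)$, and then invokes the spectral LMI theorem. However, there is a genuine gap in the middle step, and it is precisely where the paper does its real work.

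You write that Br\"and\'en's LMI for the cone $C_k$ ``is naturally $S_d$-equivariant by construction'' and that it remains to ``check that it belongs to the class admitted by the spectral construction.'' It does not. Br\"and\'en's matrix $B(x)$ is indexed by words $w_1\cdots w_l$ with distinct letters from $[n]$ and $0\le l\le d-1$; the $\fS_n$-module this spans is a direct sum of permutation modules $\R[\fS_n/\fS_{n-l}]$, which for $l\ge 2$ already contains irreducible constituents $V_\lambda$ with $\lambda$ of length greater than two (for instance sign-type components). So Br\"and\'en's representation is not \emph{short} in the sense of \Cref{thm:main}, and the main theorem cannot be applied to it directly. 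The missing idea is the compression in \Cref{thm:kleinebranden}: using the explicit kernel vector $m(x)$ from \Cref{lem:brandenkern} satisfying $B(x)\cdot m(x)=\delta_\emptyset\cdot d!\cdot\sigma_d([n])$, one compresses $B(x)$ to an $\fS_n$-linear pencil $\psi\colon\R^n\to\Sym_2(\Ma_{d-1,n})$, and $\Ma_{d-1,n}$ \emph{is} short by \Cref{cor:mashort}. Only this compressed pencil satisfies the hypothesis of \Cref{thm:main}.

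This also invalidates your size accounting. For $\De^k_e$, the relevant elementary symmetric polynomial is $\sigma_{d-k,d}$ in $d$ variables, so Br\"and\'en's matrix is indexed by words of length up to $d-k-1$; its size $\sum_{l=0}^{d-k-1} d!/(d-l)!$ grows super-polynomially in $d$ for fixed $k$, not like $\cO(d^{2k+1})$. The compressed pencil instead has size $\dim\Ma_{d-k-1,d}=\binom{d}{k+1}=\cO(d^{k+1})$, and the spectral lift — further compressed to $\Min_{d-k-1,d}$ via \Cref{rem:minors} and the symmetry $\dim\Min_{j,d}=\dim\Min_{d-j,d}$ — has size $\dim\Min_{k+1,d}=\cO(d^{2k+2})$, which is where the stated bound actually comes from. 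The lift is not a ``factor of $d$'' multiplied onto the base LMI; it roughly squares the degree of the polynomial bound.
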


Note that it was already shown in \cite{projspec} that $\Co(\De^k_e\det A(x),e)$ has a representation as a \emph{spectrahedral shadow}, i.e., the image of a spectrahedral cone under a linear map.

Our above mentioned result will be a special case of a more general statement that we want to describe in the following. Let $S\subset\R^n$ be a convex symmetric set, i.e., a set that is invariant under every permutation of the variables. The associated \emph{spectral convex set} is defined as$$\Lambda(S)=\{A\in\Sym_2(\R^n):\, \lambda(A)\in S\}$$and was recently introduced and studied by Sanyal and Saunderson \cite{sanyal2020spectral}. Here $\lambda(A)$ denotes the vector of eigenvalues of a real symmetric matrix $A$. Among others, they show that if $S$ is a spectrahedral shadow, then $\Lambda(S)$ is a spectrahedral shadow as well \cite[Thm.~4.1]{sanyal2020spectral}. Furthermore, if $S$ is a polytope, then $\Lambda(S)$ is even a spectrahedron \cite[Thm.~3.3]{sanyal2020spectral}. Now let $h\in\R[x_1,\ldots,x_n]$ be hyperbolic with respect to $e=(1,\ldots,1)$ and assume that $h$ is symmetric. Then its hyperbolicity cone $\Co(h,e)$ is symmetric and the associated spectral convex set $\Lambda(\Co(h,e))$ is a hyperbolicity cone as well by \cite[Thm.~3.1]{Bauschke}. Thus the Generalized Lax Conjecture asserts in particular that $\Lambda(S)$ is a spectrahedral cone whenever $S\subset\R^n$ is a symmetric spectrahedral cone. Although we are not able to prove this statement in its full generality, we establish a sufficient representation theoretic criterion on the LMI representation of $S$ for $\Lambda(S)$ being a spectrahedral cone. This criterion applies to the LMI description of the hyperbolicity cone of elementary symmetric polynomials that was constructed in \cite{BranEle}. From this we then obtain the above result on the hyperbolicity cones of Renegar derivatives.

Hyperbolic polynomials satisfy several types of inequalities. One of those can be expressed in terms of the \emph{Wronskian polynomial}: For any $a,b\in\R^n$ the Wronskian polynomial $\Delta_{a,b}(h)$ of $h\in\R[x_1,\ldots,x_n]$ is defined as $$\Delta_{a,b}(h)=\De_ah\cdot\De_bh-h\cdot\De_a\De_bh.$$ If $h$ is hyperbolic with respect to $e$ and $a,b\in\Co(h,e)$, then the Wronskian $\Delta_{a,b}(h)$ is globally nonnegative on $\R^n$. This follows from \cite[Thm.~5.6]{petterwron} or \cite[Thm.~3.1]{multiaffine} and is sometimes called the \emph{correlation inequality}. In fact, one can sharpen this inequality to the following inequality which holds on all of $\R^n$:
 $$\Delta_{a,b}(h)\geq \frac{h(b)}{\De_ah(b)}\cdot(\De_ah)^2.$$
Using our spectrahedral representations, we prove that for the Renegar derivatives $\De^k_e\det A(x)$ this inequality can be expressed as a sum of squares. Choosing $h$ to be the elementary symmetric polynomial $\sigma_{d+1,n}\in\R[x_1,\ldots,x_n]$ of degree $d+1$ and $a=b$ to be the all-ones vector, this recovers exactly Newton's inequalities:

\begin{thm*}
 The polynomial $$\left(\frac{\sigma_{d,n}}{\binom{n}{d}}\right)^2-\left(\frac{\sigma_{d+1,n}}{\binom{n}{d+1}}\right)\cdot \left(\frac{\sigma_{d-1,n}}{\binom{n}{d-1}}\right)$$ is a sum of squares of polynomials.
\end{thm*}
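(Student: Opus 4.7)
The plan is to obtain the theorem by specializing the sum-of-squares certificate for the sharpened correlation inequality---which the paper establishes for Renegar derivatives of the form $\De_e^k\det A(x)$---to the diagonal linear pencil with $a=b=e:=(1,\ldots,1)$, and then matching the resulting polynomial (up to a positive scalar) with the expression in the statement.

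The first step is to realize $\sigma_{d+1,n}$ as such a Renegar derivative. Take $A(x)=\diag(x_1,\ldots,x_n)$, so $\det A(x)=\sigma_{n,n}$, and verify the recursion $\De_e\sigma_{k,n}=(n-k+1)\sigma_{k-1,n}$ by a direct computation on monomials. Iterating yields
\[
\De_e^{n-d-1}\det A(x) \;=\; (n-d-1)!\,\sigma_{d+1,n}.
\]
Since sum-of-squares membership is preserved under multiplication by positive scalars (and both $\Delta_{a,b}(\cdot)$ and the scalar factor $h(b)/\De_a h(b)$ rescale compatibly in $h$), the paper's SoS certificate applies to $h:=\sigma_{d+1,n}$, giving a sum-of-squares representation of
\[
\Delta_{a,b}(h) \;-\; \frac{h(b)}{\De_a h(b)}\,(\De_a h)^2.
\]

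The second step is to specialize to $a=b=e$ and simplify. Using $\De_e h=(n-d)\sigma_{d,n}$, $\De_e^2 h=(n-d)(n-d+1)\sigma_{d-1,n}$, together with $h(e)=\binom{n}{d+1}$ and $\De_e h(e)=(n-d)\binom{n}{d}$, the identity $\binom{n}{d+1}/\binom{n}{d}=(n-d)/(d+1)$ makes the scalar factor collapse to $h(e)/\De_e h(e)=1/(d+1)$. Substituting and multiplying through by the positive constant $(d+1)/(n-d)$ turns the certificate into the assertion that
\[
d(n-d)\,\sigma_{d,n}^2 \;-\; (d+1)(n-d+1)\,\sigma_{d+1,n}\sigma_{d-1,n}
\]
is a sum of squares of polynomials.

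The third step is purely bookkeeping: the binomial identity
\[
\binom{n}{d+1}\binom{n}{d-1} \;=\; \tfrac{d(n-d)}{(d+1)(n-d+1)}\binom{n}{d}^2
\]
shows that multiplying the polynomial $P$ in the statement by the positive constant $d(n-d)\binom{n}{d}^2$ produces exactly the displayed expression above, so $P$ itself is a sum of squares. The only real difficulty of the argument lies upstream, in the paper's SoS certificate for the sharpened correlation inequality of Renegar derivatives; once that is granted, the passage to Newton's inequalities reduces to the above specialization and the tracking of binomial factors.
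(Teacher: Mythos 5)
Your proof is correct and follows essentially the same route as the paper: both specialize the sum-of-squares certificate for the sharpened correlation inequality (\Cref{thm:wron}) to the diagonal situation with $a=b=e$, and then track the binomial constants. The paper's body carries this out via \Cref{cor:newtonmatrix} (i.e.\ \Cref{thm:wron} at $A=I$) followed by restriction to diagonal matrices, whereas you route through \Cref{cor:sos} with the diagonal pencil $A(x)=\diag(x)$ and $k=n-d-1$; these differ only in the order in which the two specializations are performed, and your Step~2/Step~3 computations (including the identity $\binom{n}{d+1}\binom{n}{d-1}=\frac{d(n-d)}{(d+1)(n-d+1)}\binom{n}{d}^2$) correctly supply the arithmetic that the paper leaves implicit.
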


This implies a previous result by Gao and Wagner \cite{wronsos} stating that$${\sigma_{d,n}}^2-{\sigma_{d+1,n}}\cdot {\sigma_{d-1,n}}$$is a sum of squares.

\section{Outline}
Consider a representation $V$ of the symmetric group $\fS_n$ and an $\fS_n$-linear map $\varphi:\R^n\to\Sym_2(V)$. The preimage of the positive semidefinite cone in $\Sym_2(V)$ under $\varphi$ is a spectrahedral cone which is invariant under the action of the symmetric group $\fS_n$ on $\R^n$. Conversely, every spectrahedral cone $S\subset\R^n$ that is invariant under the action of $\fS_n$ arises in that way. Indeed, if $A(x)=A(x_1,\ldots,x_n)$ is a linear matrix polynomial that describes $S$, then the block-diagonal matrix consisting of all blocks $\sigma(A(x))=A(x_{\sigma(1)},\ldots,x_{\sigma(n)})$ for $\sigma\in\fS_n$ is of the desired form.

For $S\subset\R^n$ a symmetric spectrahedral cone as above let $\Lambda(S)\subset\Sym_2(\R^n)$ be the set of all symmetric $n\times n$ matrices whose spectrum lies in $S$. By \cite[Thm.~3.1]{Bauschke} the set $\Lambda(S)$ is a hyperbolicity cone. We give a sufficient criterion under which $\Lambda(S)$ is even a spectrahedral cone. To this end, since $\Lambda(S)$ is invariant under the action of $\Or(n)$ on $\Sym_2(\R^n)$, we want to replace our $\fS_n$-linear map $\varphi$ by a suitable $\Or(n)$-linear map. In order to formulate the precise criterion we make the following definition; 
the terminology related to partitions used in this article will be introduced in \Cref{sec:symn}. 

\begin{Def}
 A representation of $\fS_n$ is \emph{short} if it consists only of such irreducible representations that correspond to partitions of length at most $2$.
\end{Def}

In \Cref{sec:preps} we will explicitly characterize all $\fS_n$-linear maps $\R^n\to\Sym_2(V)$ for short representations $V$ of $\fS_n$. Using this characterization, we will prove the following result  in \Cref{sec:mainres}:

\begin{thm}\label{thm:main}
 Let $V$ be a short representation of $\fS_n$ and $\varphi:\R^n\to\Sym_2(V)$ an $\fS_n$-linear map. Let $S\subset\R^n$ be the preimage of the positive semidefinite cone in $\Sym_2(V)$ under $\varphi$. Then $\Lambda(S)\subset\Sym_2(\R^n)$ is a spectrahedral cone.
\end{thm}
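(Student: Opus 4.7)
The plan is to realize $\Lambda(S)$ as the preimage of the positive semidefinite cone under an $\Or(n)$-linear map $\tilde\varphi:\Sym_2(\R^n)\to\Sym_2(\tilde V)$, for a suitable orthogonal representation $\tilde V$ of $\Or(n)$. Any such representation will immediately exhibit $\Lambda(S)$ as a spectrahedral cone.

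First, I would apply the explicit description of $\fS_n$-linear maps $\R^n\to\Sym_2(V)$ to be established in \Cref{sec:preps}. Decomposing $V$ into its isotypic components indexed by short irreducibles $S^{(n-k,k)}$, this allows one to write $\varphi$ as a sum of standard building blocks, one for each occurrence of an $S^{(n-k,k)}$. The shortness hypothesis should ensure that the complete list of building blocks is explicit enough that each of them can be analyzed individually.

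The second step is to upgrade each such building block from an $\fS_n$-equivariant piece to an $\Or(n)$-equivariant piece. The guiding idea is that each short irreducible $S^{(n-k,k)}$ embeds into an $\Or(n)$-representation $W_k$, with the natural candidate being a space of spherical harmonics of degree $k$ on $\R^n$, in such a way that restricting the $\Or(n)$-action to the subgroup of permutation matrices recovers $S^{(n-k,k)}$ as a subrepresentation. Replacing each $S^{(n-k,k)}$ by the corresponding $W_k$ and lifting the building blocks yields an $\Or(n)$-linear map $\tilde\varphi:\Sym_2(\R^n)\to\Sym_2(\tilde V)$ whose restriction to the diagonal subspace $\R^n\hookrightarrow\Sym_2(\R^n)$ recovers $\varphi$ in a controlled way. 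The verification that $\tilde\varphi^{-1}(\mathrm{PSD})=\Lambda(S)$ then proceeds by spectral decomposition: every symmetric matrix can be written as $Q^{\top}\diag(\lambda(A))Q$ with $Q\in\Or(n)$, so $\Or(n)$-equivariance reduces the positive semidefiniteness of $\tilde\varphi(A)$ to that of $\tilde\varphi(\diag(\lambda(A)))$, which by construction is PSD exactly when $\varphi(\lambda(A))$ is, i.e.\ precisely when $\lambda(A)\in S$.

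The principal obstacle is the extension step: the $\Or(n)$-linear lift must exist, and it must faithfully encode the PSD condition imposed by $\varphi$, without introducing spurious constraints on the complementary $\fS_n$-isotypic pieces of $W_k$. This is exactly where the shortness assumption enters; partitions of length at most $2$ are precisely those for which the corresponding $\fS_n$-representations fit naturally inside simple $\Or(n)$-modules such as the harmonic polynomial spaces, so that the passage from the $\fS_n$-equivariant setting to the $\Or(n)$-equivariant setting is unobstructed and compatible with the symmetric-square structure. Making this explicit — identifying the correct $W_k$, the correct lift of each building block, and ruling out extra PSD constraints — will be the technical heart of the argument.
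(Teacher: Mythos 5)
Your plan follows the same overall strategy as the paper: decompose $\varphi$ into $\fS_n$-equivariant building blocks, lift each block to an $\Or(n)$-equivariant one to obtain $\Phi\colon\Sym_2(\R^n)\to\Sym_2(W)$, and then reduce positive semidefiniteness of $\Phi(A)$ to that of $\Phi(\diag(\lambda(A)))$ by diagonalization. However, there are two genuine gaps.

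First, the proposed lift targets the wrong $\Or(n)$-modules. You suggest using spherical harmonics of degree $k$ as the $\Or(n)$-analogue $W_k$ of $V_{n-k,k}$. The paper instead uses $W_{n-k,k}$, defined as the kernel of the contraction $\Delta_I\colon\Sym_2(\wedge^k\R^n)\to\Sym_2(\wedge^{k-1}\R^n)$; its irreducible core is the representation $E^{(k,k)'}$ with highest weight $(2,\dots,2)$ ($k$ twos), not the harmonic space with highest weight $(k,0,\dots,0)$. These coincide at $k=0$ but not beyond, and the building-block lemmas (\Cref{lem:resab}, \Cref{lem:resc}) rely essentially on the exterior-algebra construction, in particular the additive compound bilinear form $b_{\cL_k(X)}$ and the derivative $\Delta_A$. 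It is not apparent that analogous compatibility holds for harmonic polynomial spaces, and without it the commuting diagram relating $\varphi$ and $\Phi$ is unavailable.

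Second, and more substantially, you assert the equivalence ``$\Phi(\diag(\lambda))$ PSD $\iff$ $\varphi(\lambda)$ PSD'' holds ``by construction'' and then acknowledge that ruling out spurious constraints is the technical heart — but that heart is exactly what is missing. The forward direction is trivial (compression to the $\fS_n$-subspace $V\subset W$ preserves PSD). The converse requires the paper's decomposition $W_{n-k,k}=\bigoplus W_{n-k,k}(J_1,J_2)$ into pieces that are pairwise orthogonal with respect to $\Phi(\Lambda)$ for every diagonal $\Lambda$ (\Cref{lem:wareorth}, built on \Cref{lem:orth}, \Cref{cor:orthbzglab}, \Cref{cor:orthbzglc}), together with the maps $\rho_{d,n}$ (\Cref{lem:min1}, \Cref{lem:min2}) identifying each restriction of $\Phi(\Lambda)$ with a positive scalar multiple of a restriction of $\varphi(\lambda)$. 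None of this appears in your sketch, and it is precisely this block-orthogonality-plus-comparison argument that makes the converse work; one cannot expect it to hold automatically for an arbitrary $\Or(n)$-lift.
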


More precisely, we will associate to each short representation $V$ of $\fS_n$ a representation $W$ of $\Or(n)$ together with an $\fS_n$-linear surjective map $P:W\to V$. For every $\fS_n$-linear map $\varphi:\R^n\to\Sym_2(V)$ we then construct an $\Or(n)$-linear map $\Phi:\Sym_2(\R^n)\to\Sym_2(W)$ such that the diagram \[\begin{tikzcd}
   \R^n \arrow{r}{\varphi} \arrow{d}{\textrm{diag}} & \Sym_2(V) \\
   \Sym_2(\R^n) \arrow{r}{\Phi} & \Sym_2(W) \arrow{u}{\textrm{S}_2P}
  \end{tikzcd}\] commutes. Here $\textrm{diag}(a)$ denotes the diagonal matrix with diagonal $a\in\R^n$. We further show for all $a\in\R^n$ that $\Phi(\diag(a))$ is positive semidefinite if and only if $\varphi(a)=(\textrm{S}_2P)(\Phi(\diag(a)))$ is positive semidefinite. This implies \Cref{thm:main} since each real symmetric matrix can be diagonalized by an orthogonal transformation.

In \Cref{sec:deris} we will apply \Cref{thm:main} to the spectrahedral representation of elementary symmetric polynomials $\sigma_{d,n}$ from \cite{BranEle} and construct a spectrahedral representation of all derivative relaxations of the positive semidefinite cone. In \Cref{ex:firstderi} we note that applying \Cref{thm:main} to the spectrahedral description of $\sigma_{n-1,n}$ constructed in \cite{pol1} exactly gives us the construction from \cite{firstderi}.

Our very explicit approach makes it possible to deduce consequences for Wronskians and sums of squares in \Cref{sec:newton}.

\bigskip
\noindent \textbf{Acknowledgements.}
The question on spectrahedral representations of derivative relaxations of the positive semidefinite cone was posed at the second Problem Solving Day that took place at the Simons Institute for the Theory of Computing in the course of the program on the ``Geometry of Polynomials'' in spring 2019. I would like to thank Kuikui Liu, Claus Scheiderer, Nikhil Srivastava and especially Levent Tun\c{c}el for stimulating discussions during this Problem Solving Day and thereafter. Further I would like to thank Peter B\"urgisser and Philipp Reichenbach for pointing to literature regarding the representation theory of the orthogonal group and Petter Br\"and\'en for comments on the inequality in \Cref{cor:ineq1}. Finally, I thank the anonymous referee for many helpful comments that improved the quality of this manuscript.

\section{Some representation theory}\label{sec:preps}
For any natural number $n$ we let $[n]=\{1,\ldots,n\}$. For any set $S$ we denote by $\binom{S}{d}$ the set of $d$-element subsets of $S$. For all natural numbers $d,n$ with $0\leq d\leq n$ we consider the real vector space $\Ma_{d,n}$ of all multiaffine homogeneous polynomials of degree $d$ in $n$ variables, i.e., the subspace of $\R[x_1,\ldots,x_n]_d$ that is spanned by square-free monomials. For any subset $I\subset[n]=\{1,\ldots,n\}$ we let $\sigma_{d}(I)$ be the elementary symmetric polynomial of degree $d$ in the variables indexed by $I$. We always have $\sigma_{d}(I)\in\Ma_{d,n}$. We further denote by $\delta_i$ the $i$th unit vector in $\R^n$.

\subsection{Some representation theory of $\fS_n$}\label{sec:symn}
Let $\fS_n$ be the group of all permutations of $[n]$. Recall for example from \cite[\S4.1]{fultonharris} that irreducible representations of $\fS_n$ are in bijection to \emph{partitions} of $n$, i.e. tuples $\lambda=(\lambda_1,\ldots,\lambda_r)$ of positive integers $\lambda_i$ such that $\lambda_1\geq\cdots\geq\lambda_r$ and $n=\lambda_1+\cdots+\lambda_r$. The integer $r$ is called the \emph{length} of $\lambda$. Given a partition $\lambda=(\lambda_1,\ldots,\lambda_r)$ of $n$, the \emph{conjugate partition} $\lambda'$ is defined as $(\mu_1,\ldots,\mu_{\lambda_1})$ where $\mu_k$ denotes the number of indices $i\in[r]$ such that $\lambda_i\geq k$.
We denote the irreducible $\fS_n$-module corresponding to the partition $\lambda=(\lambda_1,\ldots,\lambda_r)$ of $n$ by $V_\lambda=V_{\lambda_1,\ldots,\lambda_r}$ as in \cite{fultonharris}. However, unlike in \cite{fultonharris}, we consider \emph{real} representations of $\fS_n$ rather than complex representations. Since each irreducible representation of $\fS_n$ can in fact be defined over the rational numbers \cite[p.~46]{fultonharris}, this will not cause any problems. It implies that on the real vector space $V_\lambda$ there is an invariant scalar product and the elements of $\fS_n$ act on $V_\lambda$ as orthogonal transformations. For representations $V$ and $W$ of $\fS_n$ we denote $$(V,W)_{\fS_n}=\dim(\Hom_{\fS_n}(V,W))$$ the dimension of all $\fS_n$-linear maps from $V$ to $W$.

Now consider the natural action of $\fS_n$ on $\Ma_{d,n}$ that is given by permuting the variables. There is a unique scalar product on the vector space $\Ma_{d,n}$ that has the monomials as orthonormal basis. Clearly, this scalar product is invariant under the action of $\fS_n$. We will always identify $\Ma_{d,n}$ with its dual representation via this scalar product. As a first step we decompose $\Ma_{d,n}$ into irreducible representations.

\begin{lem}\label{lem:decma}
 We have $\Ma_{d,n}\cong\oplus_{i=0}^{\min(d,n-d)} V_{n-i,i}$.
\end{lem}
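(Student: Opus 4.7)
The plan is to recognize $\Ma_{d,n}$ as a classical Young permutation module and apply Young's rule. The square-free monomials $x_I=\prod_{i\in I}x_i$ indexed by $I\in\binom{[n]}{d}$ form an orthonormal basis of $\Ma_{d,n}$ on which $\fS_n$ acts via its natural action on $\binom{[n]}{d}$. The stabilizer of a fixed $d$-subset is a Young subgroup isomorphic to $\fS_d\times\fS_{n-d}$, so $\Ma_{d,n}$ is isomorphic to the induced module $\mathrm{Ind}_{\fS_d\times\fS_{n-d}}^{\fS_n}\mathbf{1}$, often denoted $M^{(n-d,d)}$.

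By Young's rule (see \cite{fultonharris}), this module decomposes as $\bigoplus_\lambda K_{\lambda,\mu}\,V_\lambda$, where $\mu=(n-d,d)$ and $K_{\lambda,\mu}$ is the Kostka number counting semistandard Young tableaux of shape $\lambda$ and content $\mu$. The lemma thus reduces to the combinatorial assertion that $K_{(n-i,i),(n-d,d)}=1$ for $0\le i\le\min(d,n-d)$ and $K_{\lambda,(n-d,d)}=0$ for every other partition $\lambda$ of $n$.

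For this Kostka count one observes that in any SSYT with content $(n-d,d)$ the second row can contain only $2$'s, since a $1$ there would demand a strictly smaller entry above it. A third nonempty row would require an entry $\ge 3$, so $\lambda$ has at most two parts. Writing $\lambda=(n-i,i)$, the second row has $i$ cells filled with $2$'s, forcing $i\le d$ and $i\le n-d$, while the first row is uniquely filled with $n-d$ copies of $1$ followed by $d-i$ copies of $2$. I do not anticipate a serious obstacle here; the only mildly delicate point is keeping track of the order-of-parts convention when comparing $M^{(n-d,d)}$ and $M^{(d,n-d)}$. If a more hands-on argument is preferred, the summands can be peeled off inductively via the $\fS_n$-equivariant lowering operator $D=\sum_{j=1}^n\partial/\partial x_j\colon\Ma_{d,n}\to\Ma_{d-1,n}$, using that $\ker(D|_{\Ma_{i,n}})$ has dimension $\binom{n}{i}-\binom{n}{i-1}=\dim V_{n-i,i}$ and is an $\fS_n$-submodule of the permutation module.
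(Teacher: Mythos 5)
Your proof is correct and takes essentially the same approach as the paper: identifying $\Ma_{d,n}$ with the permutation module $\mathrm{Ind}_{\fS_d\times\fS_{n-d}}^{\fS_n}\mathbf{1}$ and invoking Young's rule. The Kostka-number computation you include (and the alternative via the lowering operator) merely makes explicit what the paper delegates to the cited reference.
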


\begin{proof}
 It is straightforward to see that $\Ma_{d,n}$ is the representation of $\fS_n$ induced by the trivial representation of $\fS_d\times\fS_{n-d}$. Then the claim follows directly from Young's rule \cite[Cor.~4.39]{fultonharris} as pointed out in \cite[p.~57]{fultonharris}.
\end{proof}

\begin{cor}\label{cor:mashort}
 $\Ma_{d,n}$ is a short representation of $\fS_n$.
\end{cor}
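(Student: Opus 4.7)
The plan is to combine the decomposition given in \Cref{lem:decma} with the definition of a short representation; essentially no additional work is required. By \Cref{lem:decma} we have the irreducible decomposition
\[
 \Ma_{d,n}\cong\bigoplus_{i=0}^{\min(d,n-d)} V_{n-i,i},
\]
so it suffices to check that every partition $(n-i,i)$ appearing on the right-hand side has length at most $2$.

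For $i=0$ the summand corresponds to the trivial partition $(n)$ of length $1$. For $1\leq i\leq\min(d,n-d)$, the condition $i\leq n-d$ gives $n-i\geq d\geq i\geq 1$, so $(n-i,i)$ is a genuine partition of $n$ of length exactly $2$. Hence every irreducible constituent of $\Ma_{d,n}$ is indexed by a partition of length at most $2$, which by definition means that $\Ma_{d,n}$ is short.

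There is no substantive obstacle; the only thing one needs to verify beyond \Cref{lem:decma} is that the inequalities defining the summation range force the two parts of $(n-i,i)$ to be nonincreasing and positive (when $i>0$), which is immediate from $i\leq\min(d,n-d)\leq n/2$.
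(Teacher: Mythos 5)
Your proof is correct and matches the paper's (implicit) reasoning: the paper states this corollary without proof, treating it as an immediate consequence of \Cref{lem:decma} and the definition of a short representation, which is exactly the argument you spell out.
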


\begin{ex}\label{ex:rninma}
 \Cref{lem:decma} says in partiular that we can embed $\R^n\cong V_n\oplus V_{n-1,1}$ $\fS_n$-linearly to $\Ma_{d,n}$ if $0<d<n$. We claim that such an embedding is given by $$\iota_d:\R^n\to\Ma_{d,n},\, \delta_i\mapsto x_i\cdot \sigma_{d-1}([n]\setminus\{i\}).$$ Indeed, this map is clearly $\fS_n$-linear. In order to show that it is injective, it suffices to find one vector in each irreducible component of $\R^n$ that is not sent to zero. To this end note that the all-ones vector $e=\sum_{i=1}^n\delta_i\in V_n$ is mapped to $d\cdot\sigma_d([n])\neq0$. Further $\delta_1-\delta_2\in V_{n-1,1}$ gets sent to $x_1\cdot \sigma_{d-1}([n]\setminus\{1\})-x_2\cdot \sigma_{d-1}([n]\setminus\{2\})$. Assume that this is zero, i.e., that $$x_1\cdot \sigma_{d-1}([n]\setminus\{1\})=x_2\cdot \sigma_{d-1}([n]\setminus\{2\}).$$ This implies that $x_2$ divides $\sigma_{d-1}([n]\setminus\{1\})$ which is only possible if $d=n$.
 \demo \end{ex}

\begin{ex}\label{ex:ma14dec}
 In bases the decomposition $\Ma_{1,4}=V_4\oplus V_{3,1}$ is given by $$V_{3,1}=\textrm{Span}(x_1-x_2,x_1-x_3,x_1-x_4)$$ and its orthogonal complement $V_4$ spanned by $x_1+x_2+x_3+x_4$.
\demo \end{ex}

For any $a\in\R^n$ and $0\leq d<n$ we consider the map $$\De_a: \Ma_{d+1,n}\to \Ma_{d,n},\, f\mapsto \textrm{D}_af:=\sum_{i=1}^n a_i\frac{\partial f}{\partial x_i}.$$ The map $\De_e$ for $e=(1,\ldots,1)$ is clearly a homomorphism of $\fS_n$-modules. In what follows next, we study the properties of the map $\De_e$ more closely and show how it can be used to make the decomposition of $\Ma_{d,n}$ into irreducibles explicit.

\begin{figure}[h]
  \begin{center}
    \includegraphics[width=4cm]{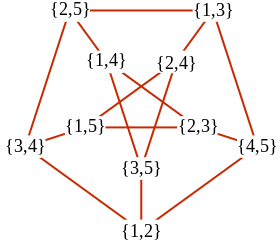}
  \end{center}
  \caption{The Kneser graph $K(5,2)$.}
  \label{fig:kneser}
\end{figure}

\begin{lem}\label{lem:kneser}
 If $2d\leq n$, then $\De^{n-2d}_e: \Ma_{n-d,n}\to \Ma_{d,n}$ is an isomorphism.
\end{lem}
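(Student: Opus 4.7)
The plan is to prove injectivity of $\De^{n-2d}_e$ --- which suffices since $\dim \Ma_{n-d,n} = \binom{n}{n-d} = \binom{n}{d} = \dim \Ma_{d,n}$ --- by equipping the direct sum $V := \bigoplus_{k=0}^{n} \Ma_{k,n}$ with the structure of a finite-dimensional $\mathfrak{sl}_2(\R)$-module in which $\De_e$ is one of the two nilpotent generators, and then invoking the classical fact that a suitable power of that generator restricts to an isomorphism between opposite weight spaces.

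The first step is to introduce the companion ``up'' operator
\[
U : \Ma_{k,n} \to \Ma_{k+1,n}, \qquad x_I \mapsto \sum_{j \in [n] \setminus I} x_{I \cup \{j\}},
\]
and to establish the commutation relation
\[
\De_e\circ U \;-\; U\circ \De_e \;=\; (n - 2k)\cdot \mathrm{Id} \qquad\text{on }\Ma_{k,n}.
\]
This is a short monomial-level computation: applied to $x_I$ with $|I| = k$, the two composites produce the same ``off-diagonal'' sum $\sum_{i \in I,\, j \notin I} x_{(I \setminus \{i\}) \cup \{j\}}$, while $\De_e\circ U$ contributes an additional $(n-k)\,x_I$ and $U\circ \De_e$ an additional $k\,x_I$, giving the coefficient $n - 2k$.

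Setting $E := \De_e$, $F := U$, and letting $H$ act on $\Ma_{k,n}$ as multiplication by $n - 2k$, the relations $[H,E] = 2E$, $[H,F] = -2F$, $[E,F] = H$ then hold automatically, so $V$ becomes a finite-dimensional $\mathfrak{sl}_2(\R)$-module whose weight space for weight $n - 2k$ is exactly $\Ma_{k,n}$. I would then invoke the classical result that on any finite-dimensional $\mathfrak{sl}_2$-module and for every integer $j \geq 0$, the operator $E^j$ restricts to a linear isomorphism between the weight $-j$ and weight $j$ subspaces. Specializing to $j = n - 2d \geq 0$ identifies $\De^{n-2d}_e = E^{n-2d}$ as the claimed isomorphism from $\Ma_{n-d,n}$ (weight $-(n-2d)$) onto $\Ma_{d,n}$ (weight $n-2d$).

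The only genuine obstacle is verifying the commutation relation in the second paragraph, which is routine; everything else is formal representation theory. In essence, the lemma is a restatement of the Lefschetz-type property of the Boolean lattice with respect to the natural up and down operators.
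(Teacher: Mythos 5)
Your proof is correct, but it takes a genuinely different route from the one in the paper. The paper composes $\De_e^{n-2d}$ with the elementary complementation isomorphism $\Ma_{d,n}\to\Ma_{n-d,n}$, observes that the resulting endomorphism of $\Ma_{d,n}$ is represented (in the monomial basis) by the adjacency matrix of the Kneser graph $K(n,n-d)$, and cites the known fact that this matrix has full rank. You instead package the operators $\De_e$ and the complementary ``up'' map $U$ into an $\mathfrak{sl}_2(\R)$-action on $\bigoplus_k \Ma_{k,n}$ via the commutator identity $[\De_e,U]=(n-2k)\,\mathrm{Id}$ on $\Ma_{k,n}$, and then invoke the standard fact that on any finite-dimensional $\mathfrak{sl}_2$-module a power $E^j$ of the raising operator restricts to an isomorphism between the weight $-j$ and weight $j$ subspaces. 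Both arguments are classical and ultimately equivalent (the hard Lefschetz property of the Boolean lattice is one well-known proof of the nonsingularity of the Kneser adjacency matrix), so the trade-off is mostly one of packaging: the paper's version is shorter because it outsources everything to a single cited graph-theoretic fact, while yours is more self-contained in that the key commutator computation is carried out explicitly and the only black box is textbook $\mathfrak{sl}_2$ theory. Your version also has the side benefit of exhibiting $\De_e$ as part of a Lefschetz triple, which makes transparent \emph{why} it has full rank in both directions (\Cref{cor:fullrankma}) rather than just that it does. Either proof is acceptable; just make sure, if you adopt yours, to note that the identity $[\De_e,U]=(n-2k)\,\mathrm{Id}$ holds in the degenerate boundary cases $k=0$ and $k=n$ as well, where one of the two composites vanishes identically.
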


\begin{proof}
 Consider the isomorphism $$\psi:\Ma_{d,n}\to\Ma_{n-d,n},\, \prod_{i\in T} x_i\mapsto\frac{1}{(n-2d)!}\prod_{i\not\in T} x_i \textrm{ for } T\in\binom{[n]}{d}.$$It suffices show that $\Psi=\psi\circ\De^{n-2d}_e$ is an isomorphism.
 For $S\in\binom{[n]}{n-d}$ we have  $$\De^{n-2d}_e \prod_{i\in S} x_i=(n-2d)!\cdot \sum_{T\in\binom{S}{d}}\prod_{i\in T} x_i.$$ Therefore, we have $$\Psi\left(\prod_{i\in S} x_i\right)=\sum_{T\in\binom{[n]}{n-d},\, S\cap T=\emptyset}\prod_{i\in T} x_i.$$
 So the representing matrix of $\Psi$ with respect to the monomial basis is the adjacency matrix of the \emph{Kneser graph} $K(n,n-d)$: This is the graph which has $\binom{[n]}{n-d}$ as its set of vertices, and two subsets of $[n]$ are adjacent if and only if they are disjoint. This matrix is known to have full rank, see e.g. \cite[Cor.~6.6.1]{knesergraph}.
\end{proof}

\begin{cor}\label{cor:fullrankma}
 Let $1\leq d\leq n$. The map $\De_e: \Ma_{d,n}\to \Ma_{d-1,n}$ has full rank: It is injective if $2d>n$ and surjective if $2d-2<n$.
\end{cor}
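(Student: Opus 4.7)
The plan is to derive both halves of the corollary as direct consequences of \Cref{lem:kneser}, by exhibiting $\De_e : \Ma_{d,n} \to \Ma_{d-1,n}$ as one factor in a composition of iterated $\De_e$'s that \Cref{lem:kneser} already identifies as an isomorphism. In each case, once the right iterated derivative is picked, it only remains to observe that if a composition is injective (resp.\ surjective), then so is its first (resp.\ last) factor.

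First I would handle surjectivity under the assumption $2d-2<n$. The idea is to apply \Cref{lem:kneser} with $d$ replaced by $d-1$: the hypothesis $2(d-1)\leq n$ is then satisfied, so $\De_e^{n-2d+2}\colon \Ma_{n-d+1,n}\to \Ma_{d-1,n}$ is an isomorphism. Since $n-2d+2\geq 1$, I can split off the last application of $\De_e$ and factor this iterated map as $\Ma_{n-d+1,n}\xrightarrow{\De_e^{n-2d+1}}\Ma_{d,n}\xrightarrow{\De_e}\Ma_{d-1,n}$; surjectivity of $\De_e\colon \Ma_{d,n}\to \Ma_{d-1,n}$ then follows because the last factor of a surjective composition is surjective.

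For injectivity under the hypothesis $2d>n$ I would instead apply \Cref{lem:kneser} with $d$ replaced by $n-d$. The required inequality $2(n-d)\leq n$ is equivalent to $2d\geq n$ and so is implied by $2d>n$; the conclusion is that $\De_e^{2d-n}\colon \Ma_{d,n}\to \Ma_{n-d,n}$ is an isomorphism. Because $2d-n\geq 1$, I can now peel off the first $\De_e$ and factor this map as $\Ma_{d,n}\xrightarrow{\De_e}\Ma_{d-1,n}\xrightarrow{\De_e^{2d-n-1}}\Ma_{n-d,n}$, which exhibits $\De_e\colon \Ma_{d,n}\to \Ma_{d-1,n}$ as the first factor of an injective composition, hence itself injective. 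I do not anticipate any genuine obstacle beyond the bookkeeping of these partition inequalities, which is needed to confirm both that the hypothesis of \Cref{lem:kneser} holds after reindexing and that the relevant exponents on $\De_e$ are nonnegative.
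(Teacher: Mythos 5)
Your proposal is correct and follows essentially the same route as the paper: both prove injectivity for $2d>n$ by factoring the isomorphism $\De_e^{2d-n}\colon\Ma_{d,n}\to\Ma_{n-d,n}$ from \Cref{lem:kneser} as $\De_e^{2d-n-1}\circ\De_e$ and extracting the first factor, and the paper dispatches the surjectivity case with ``follows analogously'', which is exactly the bookkeeping you spell out.
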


\begin{proof}
 We have that $$\dim(\Ma_{d,n})\leq\dim(\Ma_{d-1,n})\Leftrightarrow\binom{n}{d}\leq\binom{n}{d-1}\Leftrightarrow2d>n.$$ In this case we therefore have to show that $\De_e$ is injective. By \Cref{lem:kneser} the map $$\De_e^{2d-n-1}\circ\De_e:\Ma_{d,n}\to\Ma_{n-d,n}$$ is injective and thus is $\De_e:\Ma_{d,n}\to\Ma_{d-1,n}$. The other case follows analogously.
\end{proof}

\begin{cor}\label{cor:kern}
 Let $0\leq 2d\leq n$. The kernel of $\De_e: \Ma_{d,n}\to \Ma_{d-1,n}$ is isomorphic to the $\fS_n$-module $V_{n-d,d}$.
\end{cor}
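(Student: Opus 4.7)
The plan is to combine the multiplicity-free decomposition from \Cref{lem:decma}, the surjectivity part of \Cref{cor:fullrankma}, and Schur's lemma. The key observation is that the isotypic decompositions of $\Ma_{d,n}$ and $\Ma_{d-1,n}$ agree in every irreducible except for the top one $V_{n-d,d}$, which lies only in the source.

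First, using $2d\le n$, \Cref{lem:decma} gives
$$\Ma_{d,n}\;\cong\;\bigoplus_{i=0}^{d}V_{n-i,i},\qquad \Ma_{d-1,n}\;\cong\;\bigoplus_{i=0}^{d-1}V_{n-i,i},$$
each summand appearing with multiplicity exactly one (the condition $2(d-1)\le n$ guarantees that the decomposition of $\Ma_{d-1,n}$ has this form). Thus $V_{n-d,d}$ is precisely the irreducible that appears in $\Ma_{d,n}$ but not in $\Ma_{d-1,n}$, while all other irreducibles $V_{n-i,i}$ with $0\le i\le d-1$ appear on both sides.

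Second, because the decompositions are multiplicity-free, Schur's lemma pins down every $\fS_n$-linear map $\Ma_{d,n}\to\Ma_{d-1,n}$ component by component. Fix any isomorphism between the $V_{n-i,i}$-summand in $\Ma_{d,n}$ and the one in $\Ma_{d-1,n}$ for each $0\le i\le d-1$. Then the $\fS_n$-linear map $\De_e$ acts on the $V_{n-i,i}$-summand as a scalar multiple $c_i$ of the chosen isomorphism, and it necessarily vanishes on the $V_{n-d,d}$-summand (there is no target summand for it to land in). Consequently
$$\ker(\De_e)\;=\;V_{n-d,d}\;\oplus\;\bigoplus_{\,i:\,c_i=0}V_{n-i,i}.$$

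Third, from $2d\le n$ we get $2d-2<n$, so \Cref{cor:fullrankma} tells us $\De_e$ is surjective. Since the image of $\De_e$ equals $\bigoplus_{i:c_i\ne 0}V_{n-i,i}$, surjectivity forces $c_i\ne 0$ for each $0\le i\le d-1$, and we conclude $\ker(\De_e)\cong V_{n-d,d}$, as claimed. The only step that requires any care is verifying that the target decomposition really stops at index $d-1$ (so that the $V_{n-d,d}$-summand genuinely has nowhere to go); this is exactly where the hypothesis $2d\le n$ is used, and the edge case $2d=n$ causes no trouble since the partition $(n-d,d)=(n/2,n/2)$ is still an admissible partition of $n$.
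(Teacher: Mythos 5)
Your proof is correct and uses exactly the paper's strategy: combine \Cref{lem:decma} with the surjectivity part of \Cref{cor:fullrankma}. The paper just states it more tersely (surjectivity plus $\Ma_{d,n}\cong\Ma_{d-1,n}\oplus V_{n-d,d}$ and semisimplicity immediately give $\ker\De_e\cong V_{n-d,d}$); your Schur's-lemma unpacking is a valid, slightly more detailed way of saying the same thing.
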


\begin{proof}
 This is clear because $\De_e$ is surjective by \Cref{cor:fullrankma} and because $\Ma_{d,n}\cong\Ma_{d-1,n}\oplus V_{n-d,d}$ by \Cref{lem:decma}.
\end{proof}

\begin{cor}\label{cor:interscor}
 Let $0\leq2d<n$, and consider $V_{n-d,d}$ as a subset of $\Ma_{d,n}$ via the isomorphism from \Cref{lem:decma}. Then we have that $\R[x_1,\ldots,x_{n-1}]\cap V_{n-d,d}\neq\{0\}$.
\end{cor}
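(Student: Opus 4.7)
The strategy is to produce a nonzero element of the isotypic component $V_{n-d,d} \subset \Ma_{d,n}$ that does not involve the variable $x_n$, by reducing the problem to the analogous kernel statement in one fewer variable.

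First, because every irreducible $V_{n-i,i}$ appears with multiplicity one in the decomposition of $\Ma_{d,n}$ from \Cref{lem:decma}, the submodule $V_{n-d,d} \subset \Ma_{d,n}$ is canonically determined, and by \Cref{cor:kern} it coincides with $\ker(\De_e\colon \Ma_{d,n} \to \Ma_{d-1,n})$. It therefore suffices to exhibit a nonzero multiaffine polynomial $f$ in the variables $x_1,\ldots,x_{n-1}$ such that $\De_e f = 0$.

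Second, any $f \in \Ma_{d,n-1}$ satisfies $\partial f/\partial x_n = 0$, so writing $e' = (1,\ldots,1) \in \R^{n-1}$ one has $\De_e f = \De_{e'} f$ (with the image sitting inside $\Ma_{d-1,n-1} \subset \Ma_{d-1,n}$). Thus the problem reduces to showing that the kernel of $\De_{e'}\colon \Ma_{d,n-1} \to \Ma_{d-1,n-1}$ is nonzero.

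Third, the hypothesis $2d < n$ gives $2d \leq n-1$, so \Cref{cor:kern} applied with $n$ replaced by $n-1$ identifies this kernel with the irreducible $\fS_{n-1}$-module $V_{n-1-d,d}$. Since $n-1-d \geq d$, the pair $(n-1-d,d)$ is a bona fide partition of $n-1$, so $V_{n-1-d,d}$ is nonzero; any nonzero element of this kernel, viewed inside $\Ma_{d,n}$ via the natural inclusion $\Ma_{d,n-1} \hookrightarrow \Ma_{d,n}$, lies in $\R[x_1,\ldots,x_{n-1}] \cap V_{n-d,d}$. No real obstacle is expected: the proof is a short two-step reduction, and the only quantitative point is that the strict inequality $2d < n$ is exactly what permits passage to $n-1$ variables while preserving the two-row shape of the partition.
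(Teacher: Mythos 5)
Your proof is correct and follows essentially the same route as the paper: both proofs observe that $\De_e$ restricted to $\Ma_{d,n-1}$ lands in $\Ma_{d-1,n-1}$ and conclude that this restriction has nontrivial kernel, which lands inside $V_{n-d,d}$. The only cosmetic difference is that the paper finishes with a bare dimension count ($\binom{n-1}{d}>\binom{n-1}{d-1}$ when $2d<n$), whereas you re-invoke \Cref{cor:kern} in $n-1$ variables to identify that kernel as the irreducible $V_{n-1-d,d}$; this extra precision is harmless but not needed.
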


\begin{proof}
 The map $\De_e$ maps $\Ma_{d,n-1}$ to $\Ma_{d-1,n-1}$. Thus its kernel intersects $\Ma_{d,n-1}$ nontrivially for dimension reasons.
\end{proof}

\begin{ex}\label{ex:ma24dec}
By \Cref{lem:decma} we know that $$\Ma_{2,4}=V_4\oplus V_{3,1}\oplus V_{2,2}.$$
We want to compute this decomposition explicitly. By \Cref{cor:kern} the component $V_{2,2}$ is the kernel of $\De_e:\Ma_{2,4}\to\Ma_{1,4}$. Its representing matrix with respect to the monomial bases is given by:
\[ 
\bordermatrix{
	  &x_1x_2 &  x_1x_3 &  x_1x_4 & x_2x_3 & x_2x_4 & x_3x_4 \cr
x_1 & 1 & 1 & 1 & 0 & 0 & 0  \cr
x_2 & 1 & 0 & 0 & 1 & 1 & 0  \cr
x_3  & 0 & 1 & 0 & 1 & 0 & 1  \cr
x_4  & 0 & 0 & 1 & 0 & 1 & 1  \cr
}.
\] Its kernel and thus $V_{2,2}$ is spanned $(x_1-x_4)(x_2-x_3)$ and $(x_1-x_3)(x_2-x_4)$. The orthogonal complement of $V_{2,2}$ in $\Ma_{2,4}$ is $V_4\oplus V_{3,1}$ and can be computed as  \[W=\textrm{Span}(
x_1(x_2+x_3+x_4),x_2(x_1+x_3+x_4),x_3(x_1+x_2+x_4),x_4(x_1+x_2+x_3)).
\]Another application of \Cref{cor:kern} shows that $V_{3,1}$ is the kernel of $$\De_e^2: W\to\Ma_{0,4}=\R$$which is spanned by $(x_1-x_2)(x_3+x_4)$, $(x_1-x_3)(x_2+x_4)$ and $(x_1-x_4)(x_2+x_3)$. Finally, the invariant part $V_4$ is of course spanned by $$\sigma_{2,4}=x_1x_2+x_1x_3+x_1x_4+x_2x_3+x_2x_4+x_3x_4.$$\demo
 \end{ex}

By Schur's Lemma the multiplicity of the trivial representation $V_{n}$ in both $\Sym_2 V_{n-d,d}$ and $V_{n-d,d}\otimes V_{n-d,d}$ is $1$ for $0\leq2d\leq n$. We now compute the multiplicity of $V_{n-1,1}$ in these representations of $\fS_n$. 

\begin{lem}\label{lem:stdinsym}
Let $0\leq2d\leq n$. The multiplicity of $V_{n-1,1}$ in both $\Sym_2 V_{n-d,d}$ and $V_{n-d,d}\otimes V_{n-d,d}$ is $1$ if $0<2d<n$ and $0$ otherwise.
\end{lem}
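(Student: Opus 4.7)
The plan is to compute both multiplicities by reducing to a count of $\fS_{n-1}$-invariants. Since $\R^n\cong V_n\oplus V_{n-1,1}$ as an $\fS_n$-module, and $\R^n$ is the representation induced from the trivial representation of the point-stabilizer $\fS_{n-1}\subset\fS_n$, Frobenius reciprocity yields for every $\fS_n$-module $W$
$$(V_{n-1,1},W)_{\fS_n}=(\R^n,W)_{\fS_n}-(V_n,W)_{\fS_n}=\dim W^{\fS_{n-1}}-\dim W^{\fS_n}.$$
When $0\leq 2d\leq n$, the module $V_{n-d,d}$ is irreducible, and Schur's Lemma gives $\dim W^{\fS_n}=1$ for $W=\Sym_2 V_{n-d,d}$ and for $W=V_{n-d,d}\otimes V_{n-d,d}$. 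The problem therefore reduces to computing $\dim W^{\fS_{n-1}}$ in each of these two cases.

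Next, the standard branching rule $\fS_n\downarrow\fS_{n-1}$ (remove one removable corner from the Young diagram of $(n-d,d)$) gives
$$V_{n-d,d}|_{\fS_{n-1}}=\begin{cases}V_{n-d-1,d}\oplus V_{n-d,d-1}&\text{if }0<2d<n,\\ V_{d,d-1}&\text{if }2d=n,\\ V_{n-1}&\text{if }d=0,\end{cases}$$
and in every case the summands on the right-hand side are pairwise non-isomorphic irreducible $\fS_{n-1}$-modules. All irreducible $\fS_m$-modules are absolutely irreducible and, with the invariant inner product from \Cref{sec:symn}, of orthogonal type. Schur's Lemma therefore implies that the trivial $\fS_{n-1}$-module appears in $U\otimes U'$ with multiplicity $\delta_{U,U'}$ for irreducibles $U,U'$, and, crucially, that the unique invariant in $U\otimes U$ lies in $\Sym_2 U$ rather than in $\wedge^2 U$.

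Expanding $V_{n-d,d}\otimes V_{n-d,d}$ and $\Sym_2 V_{n-d,d}$ according to the branching decomposition above, the dimension of the $\fS_{n-1}$-invariants in each of them equals the number of irreducible summands of $V_{n-d,d}|_{\fS_{n-1}}$, namely $2$ if $0<2d<n$ and $1$ if $d=0$ or $2d=n$. Subtracting $\dim W^{\fS_n}=1$ yields multiplicity $1$ in the first case and $0$ in the two extremal cases, which is the claim. The one point requiring care is the distribution of the diagonal $U\otimes U$-invariants between the $\Sym_2$ and the $\wedge^2$ summand of $\Sym_2(U_1\oplus U_2)$; this is precisely where the orthogonality remark above is used, and it ensures that the counts for $\Sym_2 V_{n-d,d}$ and for $V_{n-d,d}\otimes V_{n-d,d}$ agree.
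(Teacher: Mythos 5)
Your proof is correct and takes essentially the same route as the paper: both rewrite $(V_{n-1,1},W)_{\fS_n}$ via $(V_{n-1,1}\oplus V_n,W)_{\fS_n}-1$, apply Frobenius reciprocity to the induced module $\textrm{Ind}^{\fS_n}_{\fS_{n-1}}V_{n-1}=V_n\oplus V_{n-1,1}\cong\R^n$, invoke the branching (Pieri) rule for $V_{n-d,d}|_{\fS_{n-1}}$, and count invariants by Schur's lemma. You are a bit more explicit about why the diagonal invariant in $U\otimes U$ lands in $\Sym_2 U$ (orthogonal type), a point the paper handles by citing the same source again; otherwise the two arguments coincide.
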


\begin{proof}
We consider the usual inclusion of $\fS_{n-1}$ in $\fS_n$. Then we have $$(V_{n-1,1},\Sym_2 V_{n-d,d})_{\fS_n}=(V_{n-1,1}\oplus V_{n},\Sym_2 V_{n-d,d})_{\fS_n}-1$$since the multiplicity of the trivial representation $V_{n}$ in $\Sym_2 V_{n-d,d}$ is $1$ \cite[Ex.~4.5.1b)]{fultonharris}. By Frobenius Reciprocity \cite[Cor.~3.20]{fultonharris} and because we have that $V_{n-1,1}\oplus V_{n}=\textrm{Ind}^{\fS_n}_{\fS_{n-1}} V_{n-1}$ it follows that$$(V_{n-1,1},\Sym_2 V_{n-d,d})_{\fS_n}=(V_{n-1},\Sym_2( \textrm{Res}^{\fS_n}_{\fS_{n-1}} V_{n-d,d}))_{\fS_{n-1}}-1 .$$ By Pieri's Rule \cite[Ex.~4.44]{fultonharris} we have that  
$\textrm{Res}^{\fS_n}_{\fS_{n-1}} V_{n-d,d}=V_{n-d-1,d}\oplus V_{n-d,d-1}$ if $0<2d<n$. Otherwise, there is only one summand. Using \cite[Ex.~4.5.1b)]{fultonharris} again implies then the claim for $\Sym_2 V_{n-d,d}$. The proof for $V_{n-d,d}\otimes V_{n-d,d}$ is verbatim the same after replacing $\Sym_2 V_{n-d,d}$ by $V_{n-d,d}\otimes V_{n-d,d}$.
\end{proof}

For describing the components isomorphic to $V_{n}$ and $V_{n-1,1}$ in $\Sym_2 (V_{n-d,d})$, we consider the diagonal map $\diag:\Ma_{d,n}\to\Sym_2(\Ma_{d,n})$ that sends a monomial $m$ to $m\otimes m$. This map is clearly $\fS_n$-invariant. By restricting this map to $V_{n}$ resp. $V_{n-1,1}$ and projecting to $V_{n-d,d}\subset\Ma_{d,n}$, we obtain $\fS_n$-invariant maps $$\alpha_{d,n}:V_{n}\to\Sym_2(V_{n-d,d})\subset V_{n-d,d}\otimes V_{n-d,d}$$ and $$\beta_{d,n}:V_{n-1,1}\to\Sym_2(V_{n-d,d})\subset V_{n-d,d}\otimes V_{n-d,d}.$$ The next lemmas show that both maps are nonzero.

\begin{lem}\label{lem:trvsym}
 Let $0<2d<n$. The map $\alpha_{d,n}:V_{n}\to\Sym_2(V_{n-d,d})$ is nonzero.
\end{lem}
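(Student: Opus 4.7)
The plan is to exhibit a concrete vector in $V_n$ whose image under $\alpha_{d,n}$ is visibly nonzero. Since $V_n\subset\Ma_{d,n}$ is the one-dimensional space of $\fS_n$-invariants in $\Ma_{d,n}$, it is spanned by $\sigma_d([n])=\sum_{T\in\binom{[n]}{d}}m_T$, where $m_T=\prod_{i\in T}x_i$. By definition,
\[ \alpha_{d,n}(\sigma_d([n]))\;=\;\mathrm{pr}\Bigl(\sum_T m_T\otimes m_T\Bigr), \]
where $\mathrm{pr}:\Sym_2(\Ma_{d,n})\to\Sym_2(V_{n-d,d})$ is the orthogonal projection. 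Writing each monomial as $m_T=\sum_i m_T^{(i)}$ with respect to the decomposition of \Cref{lem:decma}, the only piece of $m_T\otimes m_T$ that survives projection onto $\Sym_2(V_{n-d,d})$ is the ``diagonal'' term $m_T^{(d)}\otimes m_T^{(d)}$. Denoting by $\pi:\Ma_{d,n}\to V_{n-d,d}$ the orthogonal projection, this yields
\[ \alpha_{d,n}(\sigma_d([n]))\;=\;\sum_{T\in\binom{[n]}{d}}\pi(m_T)\otimes\pi(m_T). \]

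Next I would detect nonzeroness by pairing this element against the canonical $\fS_n$-invariant $B\in\Sym_2(V_{n-d,d})$ coming from the restricted invariant scalar product; in any orthonormal basis $\{e_k\}$ of $V_{n-d,d}$ one has $B=\sum_k e_k\otimes e_k$, and by \Cref{lem:stdinsym} this $B$ spans (up to scale) the one-dimensional invariant subspace of $\Sym_2(V_{n-d,d})$. Using the induced scalar product on $V_{n-d,d}\otimes V_{n-d,d}$, the pairing collapses to
\[ \langle\alpha_{d,n}(\sigma_d([n])),B\rangle\;=\;\sum_T\sum_k\langle\pi(m_T),e_k\rangle^2\;=\;\sum_T\|\pi(m_T)\|^2, \]
manifestly a sum of squares. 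It is strictly positive unless $\pi$ vanishes on every $m_T$, which would force $\pi=0$ on all of $\Ma_{d,n}$ and hence $V_{n-d,d}=0$. But the hypothesis $0<2d<n$ gives $d\le n-d$, so \Cref{lem:decma} guarantees that $V_{n-d,d}$ is a genuine irreducible summand of $\Ma_{d,n}$, whence $\pi\neq 0$ and the pairing is nonzero.

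The only substantive step is the first paragraph, namely the bookkeeping that isolates $\sum_T\pi(m_T)\otimes\pi(m_T)$ as the image; this reduces to the elementary observation that under the $\fS_n$-equivariant decomposition of $\Sym_2(\Ma_{d,n})$ induced by \Cref{lem:decma}, all ``off-diagonal'' blocks $V_{n-i,i}\otimes V_{n-j,j}$ with $i\neq j$ and all wrong ``diagonal'' blocks $\Sym_2(V_{n-i,i})$ with $i\neq d$ are killed by $\mathrm{pr}$. After that, the positivity argument is automatic, so I anticipate no further obstacle.
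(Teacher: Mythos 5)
Your proof is correct and follows essentially the same route as the paper's: both evaluate $\alpha_{d,n}$ at the invariant $\sigma_d([n])$ and identify the image as $\sum_{T}\pi(m_T)\otimes\pi(m_T)$, i.e., the restriction to $V_{n-d,d}$ of the identity form on $\Ma_{d,n}$. The only difference is the last step of detecting nonzeroness: the paper observes directly that the restriction of a positive definite form is positive definite (hence nonzero), whereas you pair against the identity form of $\Sym_2(V_{n-d,d})$ to compute the trace $\sum_T\|\pi(m_T)\|^2$ and argue it is a strictly positive sum of squares. These are equivalent (a positive definite form has positive trace), so the content is the same; the paper's phrasing is marginally shorter. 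One tiny citation nit: the uniqueness of the invariant in $\Sym_2(V_{n-d,d})$ that you attribute to \Cref{lem:stdinsym} is actually the Schur's Lemma remark preceding that lemma (and in any case is not needed — pairing against any invariant suffices).
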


\begin{proof}
 The invariant part of $\Ma_{d,n}$ is spanned by $\sigma_{d}([n])$ which is mapped by $\diag$ to the identity matrix. This is positive definite and so is its restriction to $V_{n-d,d}$ which is in particular nontrivial.
\end{proof}

\begin{lem}\label{lem:symsym}
 Let $0<2d<n$. The map $\beta_{d,n}:V_{n-1,1}\to\Sym_2(V_{n-d,d})$ is nonzero.
\end{lem}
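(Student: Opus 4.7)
The plan is to exhibit a single element $f\in V_{n-1,1}$ on which the image $\beta_{d,n}(f)\in\Sym_2(V_{n-d,d})$ is manifestly a nonzero bilinear form. Realising $V_{n-1,1}$ inside $\Ma_{d,n}$ as the image of $\iota_d$ from \Cref{ex:rninma}, I would take as test element
$$f=\iota_d(\delta_1-\delta_2)=x_1\cdot\sigma_{d-1}([n]\setminus\{1\})-x_2\cdot\sigma_{d-1}([n]\setminus\{2\}),$$
whose monomial expansion $f=\sum_T c_T x_T$ has $c_T=1$ when $1\in T$ and $2\notin T$, $c_T=-1$ when $2\in T$ and $1\notin T$, and $c_T=0$ otherwise.

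The monomial basis of $\Ma_{d,n}$ is orthonormal for the invariant inner product, so the orthogonal projection $\pi:\Ma_{d,n}\to V_{n-d,d}$ is self-adjoint. Unwinding $\beta_{d,n}(f)=\Sym_2(\pi)(\diag(f))=\sum_T c_T\,\pi(x_T)\otimes\pi(x_T)$ via this identification yields, for every $g=\sum_T g_T x_T\in V_{n-d,d}$,
$$\beta_{d,n}(f)(g,g)=\sum_T c_T g_T^2=\sum_{T\ni 1,\,T\not\ni 2} g_T^2-\sum_{T\ni 2,\,T\not\ni 1} g_T^2.$$
It therefore suffices to produce one $g\in V_{n-d,d}$ that involves $x_1$ but not $x_2$: for such a $g$ the second sum vanishes and the first is strictly positive.

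The construction of such a $g$ is the remaining step. Applying a suitable permutation to \Cref{cor:interscor} shows that $W:=V_{n-d,d}\cap\R[x_1,x_3,x_4,\ldots,x_n]$ is nonzero, and $W$ is invariant under the subgroup $\fS_{n-1}=\fS_{[n]\setminus\{2\}}$. If every element of $W$ avoided $x_1$, then $W\subseteq \R[x_3,\ldots,x_n]$; by the $\fS_{n-1}$-invariance of $W$ I could then conjugate out each of the remaining variables one at a time to deduce $W\subseteq\R$, which contradicts $W\neq 0$ together with $d\geq 1$. Hence a desired $g$ exists.

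The main thing I expect to have to verify carefully is the unwinding that produces the formula $\beta_{d,n}(f)(g,g)=\sum_T c_T g_T^2$; the rest of the argument is combinatorial and relies only on facts already established in \Cref{sec:symn}.
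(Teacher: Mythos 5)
Your proof is correct, but it takes a slightly different and somewhat longer route than the paper's. Both proofs realise $V_{n-1,1}$ inside $\Ma_{d,n}$ via $\iota_d$ and both appeal to \Cref{cor:interscor} as the decisive input; the difference is the choice of test vector. The paper takes $e-n\delta_n\in V_{n-1,1}$, whose image under $\diag\circ\iota_d$ is a \emph{diagonal} form with entries $d>0$ on every monomial avoiding $x_n$ and $d-n$ on those containing $x_n$; its restriction to $\Ma_{d,n-1}$ is therefore positive definite, and since $\Ma_{d,n-1}\cap V_{n-d,d}\ne\{0\}$ (\Cref{cor:interscor}), \emph{any} nonzero vector in that intersection witnesses nonvanishing with no further work. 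You instead take $\delta_1-\delta_2$, which produces the sign-indefinite quadratic form $g\mapsto\sum_{T\ni1,T\not\ni2}g_T^2-\sum_{T\ni2,T\not\ni1}g_T^2$ on $V_{n-d,d}$; to see this is nonzero you must additionally produce a $g\in V_{n-d,d}\cap\R[x_1,x_3,\ldots,x_n]$ that actually involves $x_1$, which you obtain by the extra ``conjugate out'' argument exploiting $\fS_{[n]\setminus\{2\}}$-invariance. That argument is sound (if every element of $W=V_{n-d,d}\cap\R[x_1,x_3,\ldots,x_n]$ avoided $x_1$, then $W\subseteq\bigcap_{\sigma\in\fS_{[n]\setminus\{2\}}}\sigma(\R[x_3,\ldots,x_n])=\R$, forcing $W=0$ since $d\geq1$), and the formula $\beta_{d,n}(\iota_d(\delta_1-\delta_2))(g,g)=\sum_Tc_Tg_T^2$ that you flag as the careful step is indeed correct, as $\diag$ sends $f=\sum_Tc_Tx_T$ to $\sum_Tc_T\,x_T\otimes x_T$ and restriction to $V_{n-d,d}$ just evaluates coefficients. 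In short: your argument works and is an honest alternative, but the paper's choice of test vector sidesteps the extra combinatorial lemma by making the relevant restriction positive definite rather than merely nonzero.
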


\begin{proof}
 Consider the $\fS_n$-linear map $\R^n\to\Ma_{d,n}$ that sends the $i$th unit vector to $x_i\cdot\sigma_{d-1}([n]\setminus\{i\})$, see \Cref{ex:rninma}. The vector $e-n\cdot \delta_n$, where $e$ is the all-ones vector and $\delta_n$ the $n$th unit vector, lies in the $V_{n-1,1}$-part of $\R^n$. It is sent to $$d \sigma_d([n])-nx_n\sigma_{d-1}([n-1])= d \sigma_{d}([n-1])+(d-n)x_n \sigma_{d-1}([n-1]).$$ This element gets mapped by the map $\diag$ to a diagonal matrix all whose diagonal entries are $d$ or $d-n$ according to whether $x_n$ occurs in the corresponding monomial or not.  The restriction of the corresponding bilinear form to $\Ma_{d,n-1}$ is thus positive definite. Since $2d<n$ we have that $\Ma_{d,n-1}\cap V_{n-d,d}\neq\{0\}$ by \Cref{cor:interscor} which implies that the restriction of this bilinear form to $V_{n-d,d}$ is nontrivial.
\end{proof}

\begin{ex}\label{ex:n41}
 We describe the components isomorphic to $V_4$ and $V_{3,1}$ in the $\fS_4$-module $\Sym_2(V_{4-d,d})$ for $d=0,1,2$ explicitly by means of a basis.
 \begin{enumerate}[a)]
  \item We have $\Sym_2(V_4)=V_4$. A basis $V_4$ is given by any nonzero bilinear form.
  \item We have $\Sym_2(V_{2,2})=V_4\oplus V_{2,2}$. A basis of $V_4$ is given by the symmetric bilinear form whose Gram matrix with respect to the basis calculated in \Cref{ex:ma24dec} is:
  \[ 
\bordermatrix{
	  &(x_1-x_4)(x_2-x_3) &  (x_1-x_3)(x_2-x_4)  \cr
(x_1-x_4)(x_2-x_3) & 2 & 1   \cr
(x_1-x_3)(x_2-x_4) & 1 & 2   \cr
}.
\]
\item Finally, we have $\Sym_2(V_{3,1})=V_4\oplus V_{3,1}\oplus V_{2,2}$. For any $a\in\R^4$ we consider the symmetric bilinear form $G(a)$ whose Gram matrix with respect to the basis of $V_{3,1}$ calculated in \Cref{ex:ma14dec} is:
  \[
   \bordermatrix{
	  &x_1-x_2 &  x_1-x_3 & x_1-x_4  \cr
x_1-x_2 & a_1+a_2 & a_1 & a_1   \cr
x_1-x_3 & a_1 & a_1+a_3 & a_1   \cr
x_1-x_4 & a_1 & a_1 & a_1+a_4   \cr
}.
  \]
Restricting the map $a\mapsto G(a)$ to $V_4\subset\R^4$ and $V_{3,1}\subset\R^4$ respectively, we obtain the maps $\alpha_{1,4}$ resp. $\beta_{1,4}$.\demo
 \end{enumerate}
 \end{ex}
 
By Schur's Lemma the multiplicity of the trivial representation $V_{n}$ in $V_\lambda\otimes V_\mu$ is zero when $\lambda\neq\mu$. We now compute the multiplicity of $V_{n-1,1}$ in these representations of $\fS_n$ for short $\lambda$ and $\mu$.

\begin{lem}\label{lem:stdintens}
 Let $0\leq 2d<2d'\leq n$. The multiplicity of $V_{n-1,1}$ in $V_{n-d,d}\otimes V_{n-d',d'}$ is $1$ if $d'=d+1$ and $0$ otherwise.
\end{lem}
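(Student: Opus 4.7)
The plan is to follow the template of \Cref{lem:stdinsym}. Since $d\neq d'$, the irreducibles $V_{n-d,d}$ and $V_{n-d',d'}$ are non-isomorphic, so Schur's Lemma gives $(V_n,V_{n-d,d}\otimes V_{n-d',d'})_{\fS_n}=0$. Consequently
$$(V_{n-1,1},\,V_{n-d,d}\otimes V_{n-d',d'})_{\fS_n}=(V_{n-1,1}\oplus V_n,\,V_{n-d,d}\otimes V_{n-d',d'})_{\fS_n},$$
and Frobenius Reciprocity applied to $\fS_{n-1}\hookrightarrow\fS_n$, together with $V_{n-1,1}\oplus V_n=\textrm{Ind}^{\fS_n}_{\fS_{n-1}}V_{n-1}$, identifies this with
$$(V_{n-1},\,\textrm{Res}^{\fS_n}_{\fS_{n-1}}(V_{n-d,d}\otimes V_{n-d',d'}))_{\fS_{n-1}}.$$

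The next step is to expand the restriction of the tensor product via Pieri's Rule. In the generic range $0<2d$ and $2d'<n$ one has $\textrm{Res}^{\fS_n}_{\fS_{n-1}}V_{n-d,d}=V_{n-d-1,d}\oplus V_{n-d,d-1}$ and $\textrm{Res}^{\fS_n}_{\fS_{n-1}}V_{n-d',d'}=V_{n-d'-1,d'}\oplus V_{n-d',d'-1}$; in the boundary cases $d=0$ (where only $V_{n-1}$ survives) or $2d'=n$ (where $(n-d'-1,d')$ is not a partition and only $V_{n-d',d'-1}$ survives), exactly one summand is missing. Since every irreducible of $\fS_{n-1}$ is self-dual, Schur's Lemma gives $(V_{n-1},V_\mu\otimes V_\nu)_{\fS_{n-1}}=\delta_{\mu,\nu}$ for any irreducibles $V_\mu,V_\nu$, so after distributing the tensor product the multiplicity we seek simply counts matching pairs of partitions.

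Among the (at most) four cross-terms coming from $\{(n-d-1,d),(n-d,d-1)\}\times\{(n-d'-1,d'),(n-d',d'-1)\}$, a match is possible in only two ways: $(n-d-1,d)=(n-d',d'-1)$ forces $d'=d+1$, while $(n-d,d-1)=(n-d'-1,d')$ forces $d=d'+1$, which is excluded by $d<d'$; the two remaining pairs would require $d=d'$. Hence the total multiplicity equals $1$ precisely when $d'=d+1$ and vanishes otherwise. A final quick verification confirms that the boundary cases $d=0$ and $2d'=n$ only drop noncontributing summands and leave the same conclusion intact. Since the argument follows the pattern of \Cref{lem:stdinsym} essentially line by line, no step presents a genuine obstacle; the only thing one must watch is the bookkeeping for the boundary Pieri decompositions.
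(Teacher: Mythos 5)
Your proof is correct and follows the same approach as the paper: Schur's Lemma to eliminate the $V_n$ contribution, Frobenius reciprocity with $\textrm{Ind}^{\fS_n}_{\fS_{n-1}}V_{n-1}=V_n\oplus V_{n-1,1}$, and Pieri's/branching rule to reduce to counting matching irreducibles of $\fS_{n-1}$. The paper is terser — it simply observes that the two restrictions share a component only when $d'=d+1$ — whereas you spell out the four cross-terms and the boundary cases $d=0$ and $2d'=n$ explicitly, but the argument is identical in substance.
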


\begin{proof}
 We use similar arguments as in \Cref{lem:stdinsym} to compute this multiplicity. The assumption $0\leq 2d<2d'\leq n$ implies that $V_{n-d,d}$ and $V_{n-d',d'}$ are nonisomorphic irreducible representations of $\fS_n$. Thus $V_n$ does not appear in $V_{n-d,d}\otimes V_{n-d',d'}$ and we have $$(V_{n-1,1},V_{n-d,d}\otimes V_{n-d',d'})_{\fS_n}=(V_n\oplus V_{n-1,1},V_{n-d,d}\otimes V_{n-d',d'})_{\fS_n}.$$ By Frobenius Reciprocity \cite[Cor.~3.20]{fultonharris} and because we have that $V_{n-1,1}\oplus V_{n}=\textrm{Ind}^{\fS_n}_{\fS_{n-1}} V_{n-1}$ it follows that
 $$ (V_{n-1,1},V_{n-d,d}\otimes V_{n-d',d'})_{\fS_n}=(V_{n-1},\textrm{Res}^{\fS_n}_{\fS_{n-1}}V_{n-d,d}\otimes \textrm{Res}^{\fS_n}_{\fS_{n-1}}V_{n-d',d'})_{\fS_{n-1}}. $$ By Pieri's Rule \cite[Ex.~4.44]{fultonharris} we find that the only possibility for $\textrm{Res}^{\fS_n}_{\fS_{n-1}}V_{n-d,d}$ and $\textrm{Res}^{\fS_n}_{\fS_{n-1}}V_{n-d',d'}$ to share an irreducible component is that $d'=d+1$ in which case we have $(V_{n-1,1},V_{n-d,d}\otimes V_{n-d',d'})_{\fS_n}=1$.
\end{proof}

\begin{rem}\label{rem:symtens}
Let $0\leq 2d\leq n-2$. We can explicitly describe the component isomorphic to $V_{n-1,1}$ in $V_{n-d,d}\otimes V_{n-d-1,d+1}$. Consider the $\fS_n$-linear map $$\R^n\to\Hom(\Ma_{d+1,n}, \Ma_{d,n}),\, a\mapsto \De_a.$$Restricting $\De_a$ to the kernel of $\De_e$, we get an $\fS_n$-linear map $$\R^n\to\Hom(V_{n-d-1,d+1}, V_{n-d,d})\cong V_{n-d,d}\otimes V_{n-d-1,d+1}$$ since $\De_e\De_af=\De_a\De_ef=0$ for all $f$ in the kernel of $\De_e$ and by \Cref{cor:kern}.
This map is nonzero because for each homogeneous polynomial of positive degree at least one directional derivative is nonzero.
The restriction to $V_n\subset\R^n$ is zero. Therefore, the restriction to $V_{n-1,1}$ cannot be zero as well and thus gives us the desired embedding $\gamma_{d,n}: V_{n-1,1}\to V_{n-d,d}\otimes V_{n-d-1,d+1}$. For any $a\in V_{n-1,1}\subset\R^n$ the bilinear form $\gamma_{d,n}(a)$ sends a pair $(f,g)\in V_{n-d,d}\times V_{n-d-1,d+1}\subset\Ma_{d,n}\times\Ma_{d+1,n}$ to the scalar product $\langle f,\De_a g\rangle$.
\end{rem}

The maps $\alpha_{d,n},\beta_{d,n}$ and $\gamma_{d,n}$ allow us to completely describe the vector space of $\fS_n$-linear maps $\R^n\to\Sym_2(V)$ for any short representation $V$ of $\fS_n$. The next two examples illustrate this for $V=\Ma_{2,4}$.

\begin{ex}\label{ex:n42}
 We describe the component isomorphic to $V_{3,1}$ in the $\fS_4$-module $V_{4-d,d}\otimes V_{3-d,d+1}$ for $d=0,1$ explicitly by means of a basis.
 \begin{enumerate}[a)]
  \item We have $V_{4}\otimes V_{3,1}=V_{3,1}$. For any $a\in\R^4$ we consider the map $V_{3,1}\to V_4=\R$ whose representing matrix with respect to the basis of $V_{3,1}$ calculated in \Cref{ex:ma14dec} is:
  \[ 
\bordermatrix{
	  &x_1-x_2 &  x_1-x_3 & x_1-x_4  \cr
1 & a_1-a_2 & a_1-a_3 & a_1-a_4   \cr
}.
\]Restricting this map to $a\in V_{3,1}\subset\R^4$, we obtain the map $\gamma_{0,4}$.
  \item We have $V_{3,1}\otimes V_{2,2}=V_{3,1}\oplus V_{2,1,1}$. For any $a\in\R^4$ we consider the map $V_{2,2}\to V_{3,1}$ whose representing matrix with respect to the basis of $V_{2,2}$ calculated in \Cref{ex:ma24dec} and the dual basis of the one calculated in \Cref{ex:ma14dec} is:
  \[ 
\bordermatrix{
	  &(x_1-x_4)(x_2-x_3) &  (x_1-x_3)(x_2-x_4)  \cr
\frac{1}{4}(x_1-3x_2+x_3+x_4) & a_4-a_1+a_2-a_3  & a_3-a_1 +a_2-a_4  \cr
\frac{1}{4}(x_1+x_2-3x_3+x_4) & a_1-a_4+a_2-a_3 &  2a_2-2a_4  \cr
\frac{1}{4}(x_1+x_2+x_3-3x_4) &  2a_2-2a_3 &  a_1-a_3 +a_2-a_4 \cr
}.
\]Restricting this map to $a\in V_{3,1}\subset\R^4$, we obtain the map $\gamma_{1,4}$.
 \end{enumerate}
 \end{ex}

\begin{figure}[h]
  \begin{center}
    \includegraphics[width=4cm]{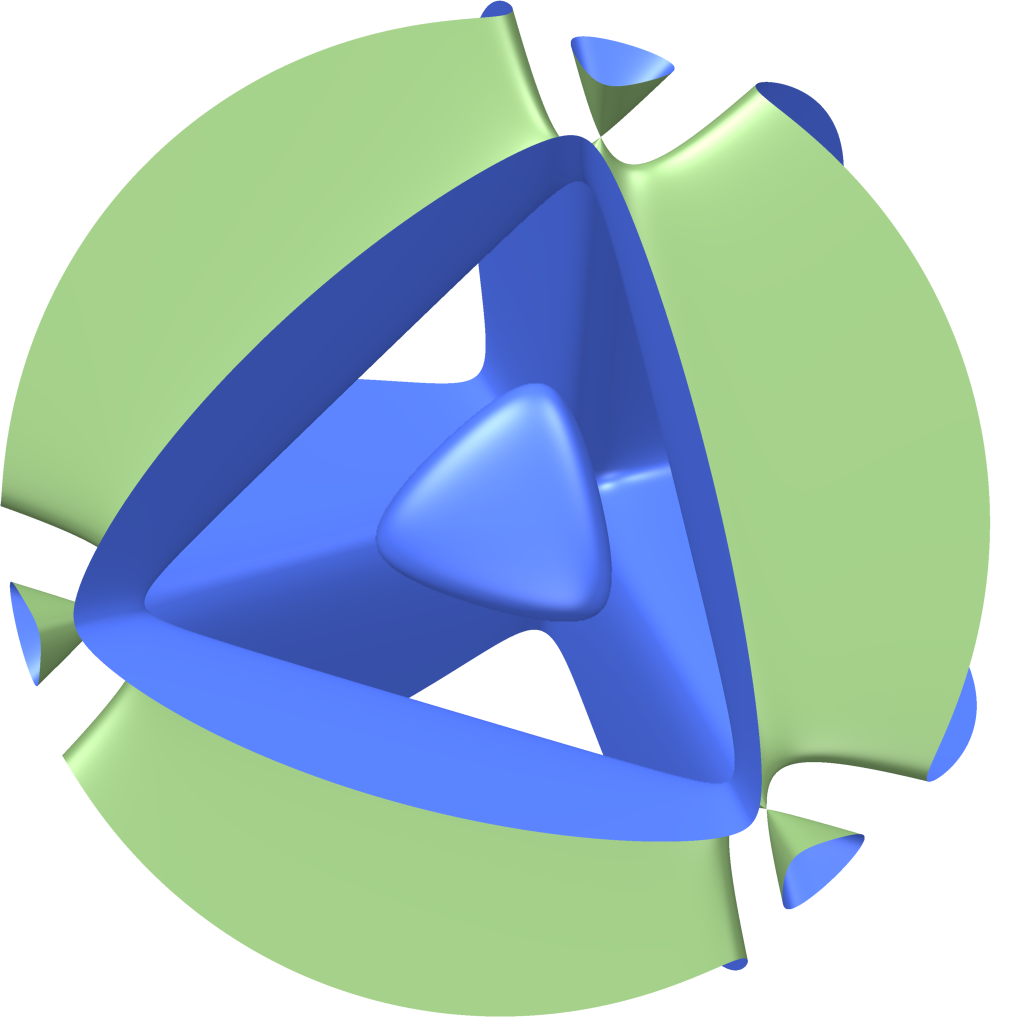}
  \end{center}
  \caption{The points in the affine hyperplane $A=1$ where the matrix from \Cref{ex:ma24complete} has rank $<4$ for $\lambda_1=\cdots=\lambda_4=1$ and $\lambda_5=\lambda_6=0$. The convex region in the middle is the spectrahedron.}
  \label{fig:quartic}
\end{figure}

\begin{ex}\label{ex:ma24complete}
 Combining \Cref{ex:n41} and \Cref{ex:n42} we can completely describe all $\fS_4$-linear maps $\R^4\to\Sym_2(\Ma_{2,4})$. To that end let $M_1(a)$ be the $1\times 1$ matrix with entry $A:=a_1+a_2+a_3+a_4$. Further let $M_5(a)$ be the matrix from \Cref{ex:n41}b) multiplied by $A$, let $M_3(a)$ be the matrix from \Cref{ex:n41}c) and $M_4(a)=A\cdot M_3(1,1,1,1)$. Finally, let $M_2(a)$ and $M_6(a)$ be the matrices from \Cref{ex:n42}a) and \Cref{ex:n42}b) respectively. Then every $\fS_4$-linear map $\R^4\to\Sym_2(\Ma_{2,4})$ is of the form $$\bordermatrix{
	  &V_4 &  V_{3,1} & V_{2,2}  \cr
V_4 & \lambda_1 M_1(a)  & \lambda_2 M_2(a) & 0  \cr
V_{3,1} & \lambda_2 M_2(a)^t &  \lambda_3M_3(a)+\lambda_4M_4(a) & \lambda_6 M_6(a)  \cr
V_{2,2} &  0 &  \lambda_6 M_6(a)^t & \lambda_5M_5(a) \cr
}
 $$for some $\lambda_1,\ldots,\lambda_6\in\R$.
\demo \end{ex}

Having a basis of the vector space of $\fS_n$-linear maps $\R^n\to\Sym_2(V)$ for any short representation $V$ of $\fS_n$, we next want to make an analogous construction for certain representations of $\Or(n)$.

\subsection{Some representation theory of $\Or(n)$}\label{sec:on}
We consider the standard scalar product on $\R^n$ for which the unit vectors form an orthonormal basis: $$\langle x,y\rangle=\sum_{i=1}^nx_iy_i.$$ The \emph{orthogonal group} $\Or(n)$ is the group of all invertible linear maps $\R^n\to\R^n$ that are orthogonal with respect to this scalar product. In particular, the vector space $\R^n$ is a representation of $\Or(n)$ which is isomorphic to the dual representation $(\R^n)^*$. Recall that the scalar product on $\R^n$ induces a scalar product on $\wedge^d\R^n$: $$\langle v_1\wedge\cdots\wedge v_d, w_1\wedge\cdots\wedge w_d\rangle=\det(\langle v_i, w_j\rangle)_{1\leq i,j\leq d}.$$Clearly this inner product is invariant under $\Or(n)$ and thus yields an isomorphism $\wedge^d\R^n\cong(\wedge^d\R^n)^*$ of $\Or(n)$-modules. For any $S=\{s_1,\ldots,s_d\}\subset[n]$ with $s_1<\cdots<s_d$ we denote by $e_S\in\wedge^d\R^n$ the element $\delta_{s_1}\wedge\cdots\wedge \delta_{s_d}$ where $\delta_i$ is the $i$th unit vector. The elements $e_S$ form an orthonormal basis with respect to the above scalar product. We further have a nondegenerate pairing $$\wedge^d\R^n\times(\wedge^{n-d}\R^n\otimes\wedge^n\R^n)\to\R,\, (\alpha,\beta\otimes\gamma)\mapsto\langle\alpha\wedge\beta,\gamma\rangle$$which is also $\Or(n)$-invariant and thus gives an isomorphism $\wedge^d\R^n\cong\wedge^{n-d}\R^n\otimes\wedge^n\R^n$ of $\Or(n)$-modules. On our basis this isomorphism operates in the following way: $$e_S\mapsto e_{S^c}\otimes (e_S\wedge e_{S^c}).$$
This also shows that $\wedge^{n}\R^n\otimes\wedge^n\R^n\cong\wedge^{0}\R^n$, the trivial representation.

In order to prove \Cref{thm:main} we want to associate to each short representation of $\fS_n$ a suitable representation of the orthogonal group $\Or(n)$. The analog to $\Ma_{d,n}$ will be the $\Or(n)$-module $\Sym_2(\wedge^d\R^n)$ (for now). Via the scalar product that we defined above on $\wedge^d\R^n$, we can also consider elements $A,B\in\Sym_2(\wedge^d\R^n)$ as selfadjoint endomorphisms of $\wedge^d\R^n$. Thus we can define a scalar product on $\Sym_2(\wedge^d\R^n)$:$$(A,B)\mapsto\tr(A\cdot B).$$

We have to set up some notation. 
Let $I,J_1,J_2\subset[n]$ be pairwise disjoint subsets such that $|J_1|=|J_2|$ and $|I|+|J_1|=d$. We write $J_1\leq J_2$ if for all $j_2\in J_2$ there is a $j_1\in J_1$ such that $j_1\leq j_2$\footnote{There is nothing special about this particular ordering and any other would work as well.}. For $J_1\leq J_2$ we define the elements $t_{I,J_1,J_2}\in\Sym_2(\wedge^d\R^n)$ as follows:
$$t_{I,J_1,J_2}=\begin{cases}
                 e_I\otimes e_I, \textrm{ if }J_1=J_2=\emptyset,\\
                 \frac{\sqrt{2}}{2}(e_{I\cup J_1}\otimes e_{I\cup J_2}+e_{I\cup J_2}\otimes e_{I\cup J_1}), \textrm{ otherwise.}
                \end{cases} $$
We note that the set $$\{t_{I,J_1,J_2}: I,J_1,J_2\subset[n]\textrm{ p.w. disjoint s.t. }|J_1|=|J_2|,\,|I|+|J_1|=d  \textrm{ and }J_1\leq J_2\}$$ is an orthonormal basis of $\Sym_2(\wedge^d\R^n)$. We observe that the map \begin{align}\label{eqn:mainsym}\Ma_{d,n}\to\Sym_2(\wedge^d\R^n),\,\prod_{i\in I}x_i\mapsto e_I\otimes e_I \textrm{ for }I\in\binom{[n]}{d}\end{align} is an $\fS_n$-linear embedding. Here we consider $\Sym_2(\wedge^d\R^n)$ as an $\fS_n$-module via the natural inclusion $\fS_n\subset\Or(n)$. Like this we will always consider $\Ma_{d,n}$ as an $\fS_n$-invariant subspace of $\Sym_2(\wedge^d\R^n)$. Next we define an analog to the derivative $\De_a:\Ma_{d,n}\to\Ma_{d-1,n}$. 

\begin{cns}\label{cns:delta}
 For each $v\in\R^n$ we have a map $$\varphi_v:\wedge^{n-d}\R^n\to\wedge^{n-d+1}\R^n,\,\omega\mapsto \omega\wedge v.$$ Employing the isomorphism of $\Or(n)$-modules $\wedge^{n-i}\R^n\cong\wedge^{i}\R^n\otimes\wedge^n\R^n$ we obtain
 $$\wedge^{d}\R^n\otimes\wedge^n\R^n\to\wedge^{d-1}\R^n\otimes\wedge^n\R^n.$$Taking the tensor product with $\wedge^n\R^n$ we obtain the map $\psi_v:\wedge^{d}\R^n\to\wedge^{d-1}\R^n$. The map $\R^n\to\Hom(\wedge^{d}\R^n,\wedge^{d-1}\R^n)$ that sends $v$ to $\psi_v$ is $\Or(n)$-linear. The same is true for the induced map $$\R^n\otimes\R^n\to\Hom(\wedge^{d}\R^n\otimes\wedge^{d}\R^n, \wedge^{d-1}\R^n\otimes\wedge^{d-1}\R^n),\, v\otimes w\mapsto \psi_v\otimes\psi_w.$$
 
\begin{lem}
 Any symmetric tensor $\omega\in\R^n\otimes\R^n$ is sent to a homomorphism that maps $\Sym_2(\wedge^{d}\R^n)\subset \wedge^{d}\R^n\otimes\wedge^{d}\R^n$ to $\Sym_2(\wedge^{d-1}\R^n)\subset \wedge^{d-1}\R^n\otimes\wedge^{d-1}\R^n$.
\end{lem}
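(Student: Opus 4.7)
The plan is to reduce to the case of a pure square $\omega = v \otimes v$ and then exploit the fact that $\psi_v \otimes \psi_v$ commutes with the swap involution. First I would observe that $\Sym_2(\R^n) \subset \R^n \otimes \R^n$ is linearly spanned by elements of the form $v \otimes v$ with $v \in \R^n$. This follows from the polarization identity
$$(v+w)\otimes(v+w) - v\otimes v - w\otimes w = v\otimes w + w\otimes v.$$
Because the assignment $v\otimes w \mapsto \psi_v \otimes \psi_w$ is linear in the tensor argument, it then suffices to prove the statement for $\omega = v\otimes v$.

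For this special case I would introduce the swap involution $\tau$ on a tensor square, defined on simple tensors by $\tau(a\otimes b) = b\otimes a$, whose $+1$-eigenspace on $\wedge^d\R^n \otimes \wedge^d\R^n$ is exactly $\Sym_2(\wedge^d\R^n)$, and similarly in degree $d-1$. Directly from the definition one has
$$\tau \circ (\psi_v \otimes \psi_v)(\alpha \otimes \beta) = \psi_v(\beta) \otimes \psi_v(\alpha) = (\psi_v \otimes \psi_v) \circ \tau (\alpha \otimes \beta)$$
on simple tensors, hence on everything by linearity. Thus for $T \in \Sym_2(\wedge^d \R^n)$ one computes
$$\tau\bigl((\psi_v \otimes \psi_v)(T)\bigr) = (\psi_v \otimes \psi_v)(\tau(T)) = (\psi_v \otimes \psi_v)(T),$$
which places the image in $\Sym_2(\wedge^{d-1}\R^n)$ as required.

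Combining the two steps proves the lemma. I anticipate no real obstacle: the key point is only that symmetry in the input $\omega$ corresponds, after the polarization reduction, to applying the same endomorphism $\psi_v$ on both tensor factors, and two copies of the same map automatically intertwine with the swap. If one preferred a more structural proof, one could instead use that $\R^n \otimes \R^n = \Sym_2(\R^n) \oplus \wedge^2 \R^n$ as $\mathrm{GL}(\R^n)$-modules and that $\Sym_2(U) \otimes \Sym_2(U)$ contains $\Sym_2(\Sym_2(U))$ for $U = \wedge^d\R^n$ versus $\wedge^{d-1}\R^n$, but the direct swap argument above is shorter and more elementary.
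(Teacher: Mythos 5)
Your proof is correct and follows essentially the same route as the paper: both reduce by polarization to the pure‑square case $\omega = v\otimes v$, and both then use that applying the same map $\psi_v$ to both tensor factors preserves symmetry. The only cosmetic difference is in how that last step is phrased—the paper checks directly that the spanning elements $\alpha\otimes\alpha$ are sent to symmetric squares, while you observe that $\psi_v\otimes\psi_v$ commutes with the swap involution and hence preserves its $+1$-eigenspace; these are two ways of stating the same observation.
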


\begin{proof}
 It suffices to show the claim for $\omega=v\otimes v$ for $v\in\R^n$ as every element of $\Sym_2(\R^n)$ is a linear combination of such. If $\alpha\in\wedge^{d}\R^n$, then clearly $$(\psi_v\otimes\psi_v)(\alpha\otimes\alpha)=\psi_v(\alpha)\otimes\psi_v(\alpha)$$ is symmetric which shows the claim.
\end{proof}

Therefore, we obtain an $\Or(n)$-linear map $$\Sym_2(\R^n)\to\Hom(\Sym_2(\wedge^{d}\R^n),\Sym_2(\wedge^{d-1}\R^n)),\, A\mapsto \Delta_A.$$\demo
\end{cns}

The next compatibility lemma justifies that $\Delta_A$ can indeed be regarded as an analog to the derivative $\De_a$.

\begin{lem}\label{lem:derisagree}
 If $A\in\Sym_2(\R^n)$ is the diagonal matrix with diagonal $a\in\R^n$, then $\Ma_{d,n}\subset\Sym_2(\wedge^d\R^n)$ gets mapped by $\Delta_A$ to $\Ma_{d-1,n}\subset\Sym_2(\wedge^{d-1}\R^n)$ and the restriction of $\Delta_A$ to $\Ma_{d,n}$ is the derivative $\De_a:\Ma_{d,n}\to\Ma_{d-1,n}$. 
\end{lem}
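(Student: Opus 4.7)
The plan is to compute $\Delta_A$ explicitly on the distinguished basis $\{e_I \otimes e_I : I \in \binom{[n]}{d}\}$ of $\Ma_{d,n} \subset \Sym_2(\wedge^d\R^n)$ and match the result with the image of $\De_a$ under the embedding (\ref{eqn:mainsym}). Decomposing the diagonal matrix $A = \diag(a) = \sum_{i=1}^n a_i\, \delta_i \otimes \delta_i$ in $\Sym_2(\R^n)$ and using the bilinearity built into \Cref{cns:delta}, we get
\[\Delta_A = \sum_{i=1}^n a_i \, \psi_{\delta_i} \otimes \psi_{\delta_i},\]
so everything reduces to understanding the single operator $\psi_{\delta_i}: \wedge^d\R^n \to \wedge^{d-1}\R^n$.

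The preliminary step is to identify $\psi_v$ with the interior product by $v$, i.e.\ with the adjoint of the exterior multiplication $\alpha \mapsto v \wedge \alpha$ with respect to the inner product on $\wedge^\bullet\R^n$. This falls out of \Cref{cns:delta} by unwinding the $\Or(n)$-isomorphism $\wedge^{n-i}\R^n \cong \wedge^i\R^n \otimes \wedge^n\R^n$ given on basis vectors by $e_S \mapsto e_{S^c} \otimes (e_S \wedge e_{S^c})$: exterior multiplication by $v$ on the left side corresponds precisely to the associated contraction on the right side. As a consequence, on the standard basis one obtains
\[\psi_{\delta_i}(e_S) = \begin{cases} \epsilon_{i,S}\, e_{S \setminus \{i\}}, & \text{if } i \in S, \\ 0, & \text{if } i \notin S, \end{cases}\]
for certain signs $\epsilon_{i,S} \in \{\pm 1\}$ whose precise values are immaterial for what follows.

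Plugging this into the formula for $\Delta_A$, for every $I \in \binom{[n]}{d}$ we get
\[\Delta_A(e_I \otimes e_I) \;=\; \sum_{i=1}^n a_i \, \psi_{\delta_i}(e_I) \otimes \psi_{\delta_i}(e_I) \;=\; \sum_{i \in I} a_i\, \epsilon_{i,I}^2\, e_{I \setminus \{i\}} \otimes e_{I \setminus \{i\}} \;=\; \sum_{i \in I} a_i\, e_{I \setminus \{i\}} \otimes e_{I \setminus \{i\}},\]
which lies in $\Ma_{d-1,n} \subset \Sym_2(\wedge^{d-1}\R^n)$; this establishes the first assertion. On the other hand,
\[\De_a \prod_{i \in I} x_i \;=\; \sum_{i \in I} a_i \prod_{j \in I \setminus \{i\}} x_j\]
is sent by (\ref{eqn:mainsym}) to exactly the same element, so the two maps agree on the monomial basis of $\Ma_{d,n}$ and therefore coincide.

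I expect the only genuine work to be the identification of $\psi_v$ with the adjoint of exterior multiplication: tracking the chain of $\Or(n)$-isomorphisms in \Cref{cns:delta} involves a modest amount of sign bookkeeping via the formula $e_S \mapsto e_{S^c} \otimes (e_S \wedge e_{S^c})$. The key feature that makes the final calculation clean is that the relevant signs enter squared, which is precisely where the diagonality of $A$ is used: writing $A$ in the orthonormal basis $\{\delta_i \otimes \delta_i\}$ prevents any off-diagonal cross-terms (where signs would no longer square to $1$) from appearing.
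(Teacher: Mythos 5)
Your proof is correct and follows essentially the same route as the paper: compute $\psi_{\delta_i}(e_S)$ on the standard basis (up to a sign that squares away), note $\Delta_A = \sum_i a_i\,\psi_{\delta_i}\otimes\psi_{\delta_i}$ for $A=\diag(a)$, and compare with $\De_a$ on monomials via the embedding (\ref{eqn:mainsym}). The conceptual framing of $\psi_v$ as the interior product is a nice extra observation, but the underlying calculation matches the paper's.
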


\begin{proof}
 The image of $e_S$ under the map $\omega\mapsto\omega\wedge \delta_i$ is, up to a sign, $e_{S\cup\{i\}}$ if $i\not\in S$ and $0$ otherwise. Thus $\psi_{\delta_i}(e_T)$ is, again up to a sign, $e_{T\setminus\{i\}}$ if $i\in T$ and $0$ otherwise. Letting $E_{ii}$ be the diagonal matrix with diagonal $\delta_i$ we therefore have that $$\Delta_{E_{ii}}(e_T\otimes e_T)=\psi_{\delta_i}(e_T)\otimes\psi_{\delta_i}(e_T)=e_{T\setminus\{i\}}\otimes e_{T\setminus\{i\}}$$if $i\in T$ and $0$ otherwise. This shows the claim.
\end{proof}

We now decompose the $\Or(n)$-module $\Sym_2(\wedge^d\R^n)$ by means of the map $\Delta_I$ in the same manner that we have decomposed $\Ma_{d,n}$ using the map $\De_e$. For this we need that the maps $\Delta_A$ and $\Delta_B$ commute.

\begin{lem}\label{lem:deltacommutes}
 For every $A,B\in\Sym_2(\R^n)$ we have $\Delta_A\circ\Delta_B=\Delta_B\circ\Delta_A$.
\end{lem}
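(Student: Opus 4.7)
The plan is to reduce the equality to the rank-one case and then exploit the anticommutativity of the exterior product. By \Cref{cns:delta} the map $A \mapsto \Delta_A$ is $\R$-linear in $A$, so by bilinearity in $(A,B)$ it suffices to verify the claim for $A$ and $B$ of the form $v\otimes v$ and $w\otimes w$ with $v,w\in\R^n$, since rank-one symmetric tensors span $\Sym_2(\R^n)$ by polarization. Similarly, tensors of the form $\alpha\otimes\alpha$ with $\alpha\in\wedge^d\R^n$ span $\Sym_2(\wedge^d\R^n)$, so it is enough to check equality on inputs of this shape.

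For $A=v\otimes v$ the construction in \Cref{cns:delta} gives $\Delta_A(\alpha\otimes\alpha)=\psi_v(\alpha)\otimes\psi_v(\alpha)$, and analogously for $B=w\otimes w$. Iterating, one computes
\[
\Delta_A\circ\Delta_B(\alpha\otimes\alpha)=\bigl(\psi_v\psi_w(\alpha)\bigr)\otimes\bigl(\psi_v\psi_w(\alpha)\bigr),
\]
and likewise $\Delta_B\circ\Delta_A(\alpha\otimes\alpha)=\bigl(\psi_w\psi_v(\alpha)\bigr)\otimes\bigl(\psi_w\psi_v(\alpha)\bigr)$. Thus the lemma follows as soon as we establish that $\psi_v\circ\psi_w=-\psi_w\circ\psi_v$, because the outer tensor square absorbs the overall sign: $(-\xi)\otimes(-\xi)=\xi\otimes\xi$.

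The anticommutativity $\psi_v\psi_w=-\psi_w\psi_v$ is inherited directly from the corresponding statement for the wedge maps $\varphi_v$. Indeed, for any $\omega\in\wedge^{\bullet}\R^n$ we have
\[
\varphi_v\varphi_w(\omega)=(\omega\wedge w)\wedge v=\omega\wedge w\wedge v=-\omega\wedge v\wedge w=-\varphi_w\varphi_v(\omega).
\]
Since $\psi_v$ is obtained from $\varphi_v$ by tensoring with the identity of $\wedge^n\R^n$ and applying the $\Or(n)$-equivariant Hodge-type isomorphisms $\wedge^{n-i}\R^n\cong\wedge^i\R^n\otimes\wedge^n\R^n$ described in \Cref{sec:on}, these natural identifications commute with the composition and preserve signs. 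Consequently $\psi_v\psi_w=-\psi_w\psi_v$, which is exactly what we needed.

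The only mildly delicate point is keeping the reductions clean: one has to verify the dependence on $A$ is genuinely linear (which is already packaged into \Cref{cns:delta}) and that the anticommutativity survives the identifications $\wedge^{n-i}\R^n\cong\wedge^i\R^n\otimes\wedge^n\R^n$, which it does because these are $\Or(n)$-equivariant isomorphisms independent of $v,w$. Apart from that, the argument is a direct calculation on squared elements, made possible precisely by the symmetric-tensor setup which kills the sign coming from the Grassmann algebra.
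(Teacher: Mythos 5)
Your proof is correct and follows essentially the same route as the paper's: both hinge on the anticommutativity $\psi_v\circ\psi_w=-\psi_w\circ\psi_v$ inherited from wedge products, with the sign being absorbed when two $\psi$'s are tensored together. The only cosmetic difference is that you first reduce to rank-one $A=v\otimes v$, $B=w\otimes w$ and squared arguments $\alpha\otimes\alpha$ via polarization, whereas the paper works directly with general elementary tensors $\psi_{v_1}\otimes\psi_{v_2}$ and $\psi_{w_1}\otimes\psi_{w_2}$, making the polarization step unnecessary.
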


\begin{proof}
 We use the notation from \Cref{cns:delta}. For any $v,w\in\R^n$ we clearly have $\varphi_v\circ\varphi_w=-\varphi_w\circ\varphi_v$. Thus by construction we also have $\psi_v\circ\psi_w=-\psi_w\circ\psi_v$. It follows that for any $v_1,v_2,w_1,w_2\in\R^n$ we have $$(\psi_{v_1}\otimes\psi_{v_2})\circ(\psi_{w_1}\otimes\psi_{w_2})=(\psi_{v_1}\circ\psi_{w_1})\otimes(\psi_{v_2}\circ\psi_{w_2})$$  $$=(-\psi_{w_1}\circ\psi_{v_1})\otimes(-\psi_{w_2}\circ\psi_{v_2})=(\psi_{w_1}\otimes\psi_{w_2})\circ(\psi_{v_1}\otimes\psi_{v_2})$$which implies the claim.
\end{proof}

Let $I\in\Sym_2(\R^n)$ be the identity matrix. Then $$\Delta_I: \Sym_2(\wedge^d\R^n)\to \Sym_2(\wedge^{d-1}\R^n)$$ is $\Or(n)$-linear because $I$ is fixed under the action of $\Or(n)$. We denote its kernel by $W_{n-d,d}$. If $2d\leq n$, then the intersection of $W_{n-d,d}$ with $\Ma_{d,n}$ is $V_{n-d,d}$ by \Cref{cor:kern} and \Cref{lem:derisagree}.

\begin{ex}\label{ex:trace}
 In the case $d=1$ the above map $$\Delta_I:\Sym_2(\R^n)\to\R$$ is just given by the trace.
\demo \end{ex}

\begin{ex}
 Clearly $W_{n}$ is the trivial $\Or(n)$-module. Further the $\Or(n)$-module $W_{n-1,1}\subset\Sym_2(\R^n)$ is the space of traceless matrices by \Cref{ex:trace}. The decomposition of $\Sym_2(\R^n)$ into irreducible $\Or(n)$-modules is thus $W_n\oplus W_{n-1,1}$. The subspace of $\Sym_2(\R^n)$ that we identified with $\Ma_{1,n}$ is the space of diagonal matrices. Note that in general $W_{n-d,d}$ does not need to be irreducible for $d\geq2$.
\demo \end{ex}

Now we define analogs for the maps $\alpha_{d,n},\beta_{d,n}$ and $\gamma_{d,n}$. To this end note that every $X\in\Sym_2(\wedge^d\R^n)$ gives rise to a symmetric bilinear form $$b_X: \Sym_2(\wedge^d\R^n)\otimes\Sym_2(\wedge^d\R^n)\to\R,\, A\otimes B \mapsto \tr(AXB)$$and clearly the map $X\mapsto b_X$ is $\Or(n)$-invariant. The natural map $\GL(\R^n)\to\GL(\wedge^d\R^n)$ that sends an invertible endomorphism $X$ to the induced endomorphism $\wedge^dX$ is a homomorphism of Lie groups and thus induces a homomorphism $\End(\R^n)\to\End(\wedge^d\R^n)$ of Lie algebras that commutes with taking the adjoint of an endomorphism. Thus we get a linear map $\cL_d:\Sym_2(\R^n)\to\Sym_2(\wedge^d\R^n)$ which is even $\Or(n)$-linear. The matrix $\cL_d(X)$ is called the \emph{$d$th additive compound matrix} of $X\in\Sym_2(\R^n)$. See \cite[Thm.~2]{additivecompound} for a proof of the above mentioned and further properties. So we get an $\Or(n)$-linear map $$\Sym_2(\R^n)\to\Sym_2(\Sym_2(\wedge^d\R^n)), X\mapsto b_{\cL_d(X)}.$$

\begin{rem}
 The linear map $\cL_d(X)$ can also be defined by the rule
 $$\cL_d(X)(v_{i_1}\wedge\cdots\wedge v_{i_d})=\sum_{j=1}^dv_{i_1}\wedge\cdots\wedge v_{i_{j-1}} \wedge Xv_{i_j}\wedge v_{i_{j+1}}\cdots\wedge v_{i_d}. $$
\end{rem}

\begin{lem}\label{lem:orth1}
 Let $D\in\Sym_2(\R^n)$ be a diagonal matrix. Let $I^i,J^i_1,J^i_2\subset[n]$, $i=1,2$, be two different triples of pairwise disjoint subsets such that $|J^i_1|=|J^i_2|$, $|I^i|+|J^i_1|=d$ and $J^i_1\leq J^i_2$ for $i=1,2$. Then $b_{\cL_d(D)}(t_{I^1,J^1_1,J^1_2},t_{I^2,J_1^2,J_2^2})=0$.
\end{lem}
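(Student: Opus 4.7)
The plan is to reduce everything to the observation that $\cL_d(D)$ is diagonal in the $\{e_S\}$ basis whenever $D$ is diagonal. Concretely, the wedge formula
\[\cL_d(D)(\delta_{i_1} \wedge \cdots \wedge \delta_{i_d}) = \sum_{j=1}^d \delta_{i_1} \wedge \cdots \wedge D\delta_{i_j} \wedge \cdots \wedge \delta_{i_d}\]
combined with $D\delta_i = d_i \delta_i$ for $D = \diag(d_1,\ldots,d_n)$ gives $\cL_d(D)\, e_S = \mu_S\, e_S$ with $\mu_S := \sum_{i \in S} d_i$.

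Next, viewing $A,B \in \Sym_2(\wedge^d\R^n)$ as self-adjoint endomorphisms of $\wedge^d\R^n$ via the scalar product, they have symmetric matrices $A_{TS} = \langle e_T, A e_S\rangle$ in the basis $\{e_S\}_{S \in \binom{[n]}{d}}$. Using the diagonal form of $\cL_d(D)$, I would expand
\[b_{\cL_d(D)}(A, B) \;=\; \tr(A\, \cL_d(D)\, B) \;=\; \sum_{S, T} \mu_S\, A_{TS}\, B_{TS}.\]
A direct reading of the definition of $t_{I,J_1,J_2}$ shows that its matrix in the $\{e_S\}$ basis is supported on the single diagonal entry $(I,I)$ when $J_1 = J_2 = \emptyset$, and on the two symmetric off-diagonal entries $(I \cup J_1, I \cup J_2)$ and $(I \cup J_2, I \cup J_1)$ otherwise (both equal to $\tfrac{\sqrt{2}}{2}$). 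Hence a summand on the right-hand side can be nonzero only if the supports of $A = t_{I^1,J_1^1,J_2^1}$ and $B = t_{I^2,J_1^2,J_2^2}$ overlap at some pair $(T,S)$.

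I would finish by checking that such an overlap forces the two triples to coincide. Since the $J$'s in any admissible triple are pairwise disjoint and disjoint from $I$, one has $(I \cup J_1) \cap (I \cup J_2) = I$, so an overlap immediately yields $I^1 = I^2$; it then yields $\{J_1^1, J_2^1\} = \{J_1^2, J_2^2\}$ as unordered pairs. The ordering convention $J_1 \leq J_2$, which by disjointness of $J_1$ and $J_2$ amounts to $\min J_1 < \min J_2$ whenever both are nonempty, then rules out the swap and pins down the ordered pair uniquely. The mixed case where one triple has $J_1 = J_2 = \emptyset$ and the other does not is excluded by the same intersection argument, since the nontrivial positions of a non-diagonal $t$ lie strictly off the diagonal. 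The argument is essentially bookkeeping; the only subtlety is using the ordering convention to rule out the swap $(J_1^1, J_2^1) \leftrightarrow (J_2^2, J_1^2)$.
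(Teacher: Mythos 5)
Your proof is correct and takes essentially the same approach as the paper: observe that $\cL_d(D)$ is diagonal in the $\{e_S\}$ basis, then track the supports of the $t_{I,J_1,J_2}$ to see that distinct admissible triples cannot overlap. The paper compresses all of your overlap bookkeeping into the phrase ``by construction of $b_{\cL_d(D)}$''; your version simply makes that step explicit, including the correct use of the ordering convention $J_1\leq J_2$ (equivalently $\min J_1 < \min J_2$) to rule out the swapped pair.
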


\begin{proof}
 The representing matrix of $\cL_d(D)$ with respect to the basis given by the $e_S$ for $S\in\binom{[n]}{d}$ is diagonal. Thus by construction of $b_{\cL_d(D)}$ the elements $t_{I^1,J^1_1,J^1_2}$ and $t_{I^2,J_1^2,J_2^2}$ are orthogonal with respect to $b_{\cL_d(D)}$.
\end{proof}

Let $2d\leq n$. Choosing $X$ from the subspace $W_n$ resp. $W_{n-1,1}$ of $\Sym_2(\R^n)$ and restricting the bilinear form $b_{\cL_d(X)}$ to $W_{n-d,d}\subset\Sym_2(\wedge^d\R^n)$ we obtain $\Or(n)$-linear maps $A_{d,n}: W_n\to\Sym_2(W_{n-d,d})$ and $B_{d,n}:W_{n-1,1}\to\Sym_2(W_{n-d,d})$ respectively.

\begin{lem}\label{lem:resab}
 Let $\Lambda\in\Sym_2(\R^n)$ be the diagonal matrix with diagonal $\lambda\in\R^n$. If $\lambda\in V_n$, then the restriction of the bilinear form $A_{d,n}(\Lambda)$ to $V_{n-d,d}$ is $\alpha_{d,n}(\lambda)$. If $\lambda\in V_{n-1,1}$, then the restriction of the bilinear form $B_{d,n}(\Lambda)$ to $V_{n-d,d}$ is $\beta_{d,n}(\lambda)$.
\end{lem}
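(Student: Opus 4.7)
The plan is to verify the equality by a direct computation in the monomial basis; both sides will reduce to the explicit formula $\sum_I f_I g_I\bigl(\sum_{i\in I}\lambda_i\bigr)$ for $f=\sum_I f_I\, m_I$ and $g=\sum_I g_I\, m_I$ in $V_{n-d,d}\subset\Ma_{d,n}$, where $m_I:=\prod_{i\in I}x_i$.

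For the left-hand side, a direct application of the Leibniz-type formula for $\cL_d$ recalled in the remark above shows $\cL_d(\Lambda)(e_I)=\bigl(\sum_{i\in I}\lambda_i\bigr)\,e_I$; thus $\cL_d(\Lambda)$ is diagonal in the orthonormal basis $\{e_I:I\in\binom{[n]}{d}\}$ of $\wedge^d\R^n$ with eigenvalue $\lambda_I:=\sum_{i\in I}\lambda_i$ on $e_I$. Simultaneously, the embedding \eqref{eqn:mainsym} sends $f=\sum_I f_I\, m_I\in\Ma_{d,n}$ to the self-adjoint endomorphism $\hat f=\sum_I f_I\,(e_I\otimes e_I)$, which is likewise diagonal in $\{e_I\}$ with entries $f_I$. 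Since $\hat f$, $\cL_d(\Lambda)$, and $\hat g$ are simultaneously diagonal, a straightforward trace computation yields
\[
b_{\cL_d(\Lambda)}(\hat f,\hat g)\;=\;\tr\bigl(\hat f\cdot\cL_d(\Lambda)\cdot\hat g\bigr)\;=\;\sum_{I\in\binom{[n]}{d}} f_I\,g_I\,\lambda_I
\]
for all $f,g\in\Ma_{d,n}\subset\Sym_2(\wedge^d\R^n)$, in particular for $f,g\in V_{n-d,d}=W_{n-d,d}\cap\Ma_{d,n}$.

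I would then unpack the right-hand side through the $\fS_n$-linear injection $\iota_d:\R^n\to\Ma_{d,n}$ from \Cref{ex:rninma}. Since $\iota_d(\delta_i)=\sum_{I\ni i} m_I$, linearity gives $\iota_d(\lambda)=\sum_I \lambda_I\,m_I$, and hence $\diag(\iota_d(\lambda))=\sum_I \lambda_I\,(m_I\otimes m_I)\in\Sym_2(\Ma_{d,n})$ evaluates on a pair $(f,g)$ to exactly $\sum_I \lambda_I\, f_I\, g_I$. Thus $b_{\cL_d(\Lambda)}$ and $\diag(\iota_d(\lambda))$ agree as bilinear forms on $\Ma_{d,n}\subset\Sym_2(\wedge^d\R^n)$.

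To finish, I would restrict both sides to $V_{n-d,d}$. Restricting a symmetric bilinear form to a subspace coincides with its orthogonal projection onto the corresponding $\Sym_2$-summand, so on the left I obtain $A_{d,n}(\Lambda)|_{V_{n-d,d}}$ (respectively $B_{d,n}(\Lambda)|_{V_{n-d,d}}$), and on the right, by the defining construction of $\alpha_{d,n}$ and $\beta_{d,n}$, I obtain $\alpha_{d,n}(\iota_d(\lambda))$ (respectively $\beta_{d,n}(\iota_d(\lambda))$). Since $\iota_d$ is $\fS_n$-linear and injective, it maps $V_n$ and $V_{n-1,1}\subset\R^n$ isomorphically onto the corresponding isotypic components of $\Ma_{d,n}$, which is precisely the canonical identification implicit in the statement of the lemma. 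I do not expect a serious obstacle here: the embedding \eqref{eqn:mainsym} is engineered exactly so as to simultaneously diagonalise $\cL_d(\Lambda)$ and the images of monomials, making the agreement of the two sides automatic; the only care required is notational bookkeeping of the various copies of $V_n$ and $V_{n-1,1}$.
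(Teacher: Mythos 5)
Your proof is correct and follows essentially the same route as the paper: both reduce the statement to the identity that $b_{\cL_d(\Lambda)}$ restricted to $\Ma_{d,n}\subset\Sym_2(\wedge^d\R^n)$ equals $\diag(\iota_d(\lambda))$, using that $\cL_d(\Lambda)$ and the images of monomials are simultaneously diagonal in the basis $\{e_I\}$, and then conclude by restricting to $V_{n-d,d}$. The paper simply organizes the computation by first treating $\lambda=\delta_i$ and extending by linearity, whereas you compute directly for general $\lambda$; these are the same argument.
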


\begin{proof}
 The additive compound matrix $\cL_d(\Lambda)$ is also a diagonal matrix. We first restrict the bilinear form $b_{\cL_d(\Lambda)}$ to the space of diagonal matrices in $\Sym_2(\wedge^d(\R^n))$ which we have identified with $\Ma_{d,n}$. We denote the bilinear form on $\Ma_{d,n}$ that we get in this way by $B_\lambda$.
 
 First consider the case when $\lambda=\delta_i$ is a unit vector. In that case the diagonal entry of $b_{\cL_d(\Lambda)}$ corresponding to $e_S$ is $1$ if $i\in S$ and $0$ otherwise. Further for any $S,S'\in\binom{[n]}{d}$ we have that $B_{\delta_i}(\prod_{i\in S}x_i,\prod_{i\in S'}x_i)$ is $1$ if $i\in S$ and $S=S'$, and $0$ otherwise. Recall that the map $\diag:\Ma_{d,n}\to\Sym_2(\Ma_{d,n})$ sends a monomial $m$ to $m\otimes m$. Therefore, we have that $$B_{\delta_i}=\diag(x_i\cdot\sigma_{d-1}([n]\setminus\{i\}))=\diag(\iota_d(\delta_i))$$where $\iota_d:\R^n\to\Ma_{d,n}$ is the map considered in \Cref{ex:rninma}.
 Thus for arbitrary $\lambda\in\R^n$ we have $B_\lambda=\diag(\iota_d(\lambda))$. For $\lambda\in V_n$ or $\lambda\in V_{n-1,1}$ the restriction of $\diag(\iota_d(\lambda))$ to $V_{n-d,d}$ is exactly the definition of $\alpha_{d,n}(\lambda)$ and $\beta_{d,n}(\lambda)$ respectively.
\end{proof}

Finally, we note that for every $A\in\Sym_2(\R^n)$ the map $$\Delta_A:\Sym_2(\wedge^{d+1}\R^n)\to\Sym_2(\wedge^{d}\R^n)$$ sends $W_{n-d-1,d+1}$ to $W_{n-d,d}$ because $\Delta_A\circ\Delta_I=\Delta_I\circ\Delta_A$ by \Cref{lem:deltacommutes}. This gives an $\Or(n)$-linear map $$\Sym_2(\R^n)\to\Hom(W_{n-d-1,d+1},W_{n-d,d})=W_{n-d,d}\otimes W_{n-d-1,d+1}.$$Restricting that to $W_{n-1,1}$ yields the $\Or(n)$-linear map $$C_{d,n}:W_{n-1,1}\to W_{n-d,d}\otimes W_{n-d-1,d+1}.$$
More precisely, for a traceless symmetric matrix $A\in W_{n-1,1}\subset\Sym_2(\R^n)$, $F\in W_{n-d,d}\subset\Sym_2(\wedge^{d}\R^n)$ and $G\in W_{n-d-1,d+1}\subset\Sym_2(\wedge^{d+1}\R^n)$ the bilinear form $C_{d,n}(A)$ sends $(F,G)$ to the scalar product $\langle F, \Delta_A G \rangle$.

\begin{lem}\label{lem:resc}
 Let $\Lambda\in\Sym_2(\R^n)$ be the diagonal matrix with diagonal $\lambda\in V_{n-1,1}\subset\R^n$. Then the restriction of $C_{d,n}(\Lambda)$ to $V_{n-d,d}\otimes V_{n-d-1,d+1}$ is $\gamma_{d,n}(\lambda)$.
\end{lem}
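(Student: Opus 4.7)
The plan is to reduce $C_{d,n}(\Lambda)$ on the relevant subspaces to the pairing $(f,g)\mapsto\langle f,\De_\lambda g\rangle$ defining $\gamma_{d,n}$ in Remark \ref{rem:symtens}. The argument has three ingredients: identifying the subspaces, replacing $\Delta_\Lambda$ by $\De_\lambda$, and checking the two inner products match.

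First I would invoke the embedding $\Ma_{d,n}\hookrightarrow\Sym_2(\wedge^d\R^n)$ from (\ref{eqn:mainsym}) and recall that under this identification $V_{n-d,d}\subset\Ma_{d,n}$ lies in $W_{n-d,d}=\ker\Delta_I$, as already noted after Lemma \ref{lem:deltacommutes}. The analogous statement holds for $V_{n-d-1,d+1}\subset W_{n-d-1,d+1}$. Thus for $f\in V_{n-d,d}$ and $g\in V_{n-d-1,d+1}$ both sides of the claimed equation make sense, and by the defining formula of $C_{d,n}$ we have
\[
C_{d,n}(\Lambda)(f,g)=\langle f,\Delta_\Lambda g\rangle,
\]
where $\langle\cdot,\cdot\rangle$ denotes the trace inner product on $\Sym_2(\wedge^d\R^n)$.

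Next, since $\Lambda$ is diagonal with diagonal $\lambda$, Lemma \ref{lem:derisagree} tells us that $\Delta_\Lambda$ maps $\Ma_{d+1,n}\subset\Sym_2(\wedge^{d+1}\R^n)$ into $\Ma_{d,n}\subset\Sym_2(\wedge^{d}\R^n)$, and the restriction agrees with the directional derivative $\De_\lambda$. In particular $\Delta_\Lambda g=\De_\lambda g\in\Ma_{d,n}$.

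It therefore remains to check that the trace inner product restricted to the monomial embedding of $\Ma_{d,n}$ coincides with the natural scalar product on $\Ma_{d,n}$ defined in Section \ref{sec:symn}. This is an easy direct verification: for $I,J\in\binom{[n]}{d}$ the endomorphism composition $(e_I\otimes e_I)\circ(e_J\otimes e_J)$ has trace $\delta_{IJ}$, so the monomials $\prod_{i\in I}x_i$ form an orthonormal set under the trace pairing as well. Combining the three observations yields
\[
C_{d,n}(\Lambda)(f,g)=\langle f,\De_\lambda g\rangle_{\Ma_{d,n}}=\gamma_{d,n}(\lambda)(f,g),
\]
which is exactly the required equality. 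There is no real obstacle here; the only care needed is the bookkeeping with the two inner products, and the fact that pairing with $f\in V_{n-d,d}$ automatically extracts the relevant component of $\De_\lambda g\in\Ma_{d,n}$, so we do not need $\De_\lambda$ to preserve the $V_{n-d,d}$-isotypic piece.
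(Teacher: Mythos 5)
Your proof is correct and follows essentially the same route as the paper, which simply cites Lemma \ref{lem:derisagree} together with the definition of $\gamma_{d,n}$. You add the useful (and implicitly required) verification that the embedding $\Ma_{d,n}\hookrightarrow\Sym_2(\wedge^d\R^n)$, $\prod_{i\in I}x_i\mapsto e_I\otimes e_I=t_{I,\emptyset,\emptyset}$, is an isometry for the two scalar products, which is what lets the trace pairing defining $C_{d,n}$ collapse to the monomial scalar product defining $\gamma_{d,n}$.
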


\begin{proof}
 This follows directly from \Cref{lem:derisagree} and the definition of $\gamma_{d,n}(\lambda)$.
\end{proof}

Let us draw up an informal interim balance. We have seen in \Cref{sec:symn} that the maps $\alpha_{d,n},\beta_{d,n}$ and $\gamma_{d,n}$ serve as building blocks for $\fS_n$-linear maps $\varphi:\R^n\to\Sym_2(V)$ where $V$ is a short representation. By replacing each copy of $V_{n-d,d}$ in $V$ by a copy of $W_{n-d,d}$ we can associate to $V$ an $\Or(n)$-module $W$. Further by replacing the maps of type $\alpha_{d,n},\beta_{d,n}$ and $\gamma_{d,n}$ that constitute $\varphi$ with the corresponding maps $A_{d,n}$, $B_{d,n}$ and $C_{d,n}$ we obtain a map $\Phi:\Sym_2(\R^n)\to\Sym_2(W)$. \Cref{lem:resab} and \Cref{lem:resc} then imply that the diagram \[\begin{tikzcd}
   \R^n \arrow{r}{\varphi} \arrow{d}{\textrm{diag}} & \Sym_2(V) \\
   \Sym_2(\R^n) \arrow{r}{\Phi} & \Sym_2(W) \arrow{u}{\textrm{S}_2P}
  \end{tikzcd}\] commutes. Here $\textrm{diag}(a)$ denotes the diagonal matrix with diagonal $a\in\R^n$ and $P:W_{n-d,d}\to V_{n-d,d}$ is the orthogonal projection onto $V_{n-d,d}\subset W_{n-d,d}$.
 
Now if $\Phi(\diag(a))$ is positive semidefinite for some $a\in\R^n$, then its compression $\varphi(a)=(\textrm{S}_2P)(\Phi(\diag(a)))$ is positive semidefinite as well. For proving the converse of this statement, namely that $\Phi(\diag(a))$ is positive semidefinite whenever $\varphi(a)$ is positive semidefinite, we need some more careful analysis of the spaces $W_{n-d,d}$. This will result in a decomposition of $W$ as a direct sum of subspaces that are pairwise orthogonal with respect to $\Phi(\diag(a))$ for all $a\in\R^n$. In order to show that $\Phi(\diag(a))$ is positive semidefinite it then suffices to show that the restriction of $\Phi(\diag(a))$ to each of these subspaces is positive semidefinite. This will be done by identifying these restrictions of  $\Phi(\diag(a))$ with restrictions of $\varphi(a)$ to suitable subspaces of $V$.

\subsection{A decomposition of $W_{n-d,d}$}
In this section we always let $J_1,J_2\subset[n]$ be disjoint such that $|J_1|=|J_2|$ and $J_1\leq J_2$. We write $T^{d,n}_{J_1,J_2}\subset\Sym_2(\wedge^d\R^n)$ for the span of all $t_{I,J_1,J_2}$ with $I\in\binom{[n]}{d-|J_1|}$ disjoint from $J_1$ and $J_2$.

\begin{lem}\label{lem:orth}
 The subspaces $T^{d,n}_{J_1,J_2}\subset\Sym_2(\wedge^d\R^n)$ are pairwise orthogonal with respect to the bilinear form $b_{\cL_d(D)}$ for all diagonal matrices $D\in\Sym_2(\R^n)$.
\end{lem}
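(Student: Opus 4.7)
The plan is that Lemma~\ref{lem:orth} is an immediate repackaging of Lemma~\ref{lem:orth1}. By construction, each subspace $T^{d,n}_{J_1,J_2}$ is spanned by the basis vectors $t_{I,J_1,J_2}$ as $I$ ranges over the $(d-|J_1|)$-element subsets of $[n]$ that are disjoint from $J_1\cup J_2$. To establish pairwise orthogonality of two such subspaces, I would fix distinct pairs $(J_1,J_2)\neq(J_1',J_2')$ (each satisfying the standing conventions $|J_1|=|J_2|$ and $J_1\leq J_2$, and likewise for the primed pair) together with any admissible index sets $I$ and $I'$.

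Since the pairs $(J_1,J_2)$ and $(J_1',J_2')$ are different, the triples $(I,J_1,J_2)$ and $(I',J_1',J_2')$ are automatically different, so Lemma~\ref{lem:orth1} yields $b_{\cL_d(D)}(t_{I,J_1,J_2},t_{I',J_1',J_2'})=0$. Bilinearity of $b_{\cL_d(D)}$ then promotes this vanishing on basis pairs to orthogonality of the full subspaces $T^{d,n}_{J_1,J_2}$ and $T^{d,n}_{J_1',J_2'}$. There is no genuine obstacle here: the computational content, namely that $\cL_d(D)$ is diagonal in the $e_S$-basis and hence kills all off-diagonal pairings between the $t_{I,J_1,J_2}$, was already dispatched in Lemma~\ref{lem:orth1}; the present lemma is only the bookkeeping step needed to organize the basis vectors into the subspaces that will furnish the decomposition of $W_{n-d,d}$ foreshadowed at the end of the previous subsection.
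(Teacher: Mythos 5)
Your proposal is correct and is exactly the argument the paper intends: Lemma~\ref{lem:orth} is a direct consequence of Lemma~\ref{lem:orth1} by checking orthogonality on the spanning vectors $t_{I,J_1,J_2}$ and extending by bilinearity. The paper simply states "This follows directly from \Cref{lem:orth1}," so your write-up only makes explicit what is left implicit there.
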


\begin{proof}
 This follows directly from \Cref{lem:orth1}.
\end{proof}

\begin{lem}\label{lem:tisclosed}
 If $A\in\Sym_2(\R^n)$ is the diagonal matrix with diagonal $a\in\R^n$, then
 $\Delta_A(t_{I,J_1,J_2})=\sum_{1\leq i\leq n,\, i\in I}a_it_{I\setminus\{i\},J_1,J_2}$.
 Moreover, the image of $T^{d,n}_{J_1,J_2}$ under $\Delta_A$ is contained in $T^{d-1,n}_{J_1,J_2}$.
\end{lem}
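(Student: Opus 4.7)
By $\R$-linearity of $A \mapsto \Delta_A$, the first claim reduces to the special case $A = E_{ii} = \delta_i \otimes \delta_i$ for each $i \in [n]$. For such a rank-one symmetric tensor, the construction in \Cref{cns:delta} gives $\Delta_{E_{ii}}(\alpha \otimes \beta) = \psi_{\delta_i}(\alpha) \otimes \psi_{\delta_i}(\beta)$ on pure tensors, so the whole computation is driven by how $\psi_{\delta_i}$ acts on basis vectors $e_S$ of $\wedge^d \R^n$. Unwinding the definition of $\psi_v$ through the isomorphisms $\wedge^k \R^n \cong \wedge^{n-k}\R^n \otimes \wedge^n \R^n$ from \Cref{sec:on} shows that $\psi_{\delta_i}(e_S) = 0$ if $i \notin S$, and $\psi_{\delta_i}(e_S) = \epsilon(S, i)\, e_{S \setminus \{i\}}$ otherwise, where $\epsilon(S, i) \in \{\pm 1\}$ depends only on the position of $i$ in the ordered set $S$. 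Compatibility with \Cref{lem:derisagree} is the fact that $\epsilon(S, i)^2 = 1$.

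Now I expand
\[
\Delta_{E_{ii}}(t_{I,J_1,J_2}) = \tfrac{\sqrt{2}}{2}\bigl(\psi_{\delta_i}(e_{I \cup J_1}) \otimes \psi_{\delta_i}(e_{I \cup J_2}) + \psi_{\delta_i}(e_{I \cup J_2}) \otimes \psi_{\delta_i}(e_{I \cup J_1})\bigr).
\]
Pairwise disjointness of $I, J_1, J_2$ kills almost every case: if $i \in J_1$ then $i \notin I \cup J_2$, so the second tensor factor vanishes, and symmetrically for $i \in J_2$; if $i \notin I \cup J_1 \cup J_2$ then both factors vanish. The only nontrivial contribution comes from $i \in I$, in which case both $\psi_{\delta_i}(e_{I \cup J_k})$ are of the form $\pm e_{(I \setminus \{i\}) \cup J_k}$. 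Once I verify that the product of the two signs is $+1$, the bracket collapses to $t_{I \setminus \{i\}, J_1, J_2}$, and summing against $a_i$ yields the claimed identity.

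The substantive point, and where I expect the main obstacle, is this sign check. The position of $i$ in $I \cup J_k$ equals its position in $I$ plus $|\{j \in J_k : j < i\}|$, so the two contributing signs are $\epsilon(I,i)\cdot(-1)^{|\{j \in J_k : j < i\}|}$ for $k=1,2$; a priori these are unequal. The cleanest way to show that they multiply to $+1$ is to rewrite $e_{I \cup J_k}$ as the shuffled product $e_I \wedge e_{J_k}$ (with the $I$-block placed first). Since $\psi_{\delta_i}$ is a graded derivation vanishing on each $e_{J_k}$ (because $i \in I$ is disjoint from $J_k$), one obtains $\psi_{\delta_i}(e_I \wedge e_{J_k}) = \epsilon(I,i)\, e_{I \setminus \{i\}} \wedge e_{J_k}$; the sign $\epsilon(I,i)$ then appears identically in both tensor legs, so the product squares to $1$, and applying the same $e_I \wedge e_{J_k}$ normalisation to $t_{I \setminus \{i\}, J_1, J_2}$ identifies the resulting symmetric tensor with that basis element.

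For the second assertion, each summand $t_{I \setminus \{i\}, J_1, J_2}$ appearing in the formula retains the indices $J_1, J_2$, hence lies in $T^{d-1, n}_{J_1, J_2}$ by definition. Linearity of $\Delta_A$ then gives $\Delta_A(T^{d, n}_{J_1, J_2}) \subset T^{d-1, n}_{J_1, J_2}$, as required.
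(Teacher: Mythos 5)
Your proof plan follows the same structure as the paper's: reduce to $A=E_{ii}$ by linearity, analyze $\psi_{\delta_i}$ on the basis $e_S$, and split into cases according to whether $i$ lies in $I$, in $J_1\cup J_2$, or outside. You are right that the only substantive issue is the sign, and you correctly compute the relevant product of signs as $\epsilon(I\cup J_1,i)\cdot\epsilon(I\cup J_2,i)=(-1)^{|\{j\in J_1:j<i\}|+|\{j\in J_2:j<i\}|}$. This is where the attempt breaks: that exponent is \emph{not} always even, and the repair you propose does not close the gap. Rewriting $e_{I\cup J_k}$ as $e_I\wedge e_{J_k}$ indeed gives $\psi_{\delta_i}(e_I\wedge e_{J_k})=\epsilon(I,i)\,e_{I\setminus\{i\}}\wedge e_{J_k}$ with the same $\epsilon(I,i)$ in both tensor legs, so $\epsilon(I,i)^2=1$; but converting between the two normalisations costs the shuffle sign $(-1)^{\operatorname{inv}(I,J_k)}$ with $\operatorname{inv}(I,J_k)=|\{(a,b)\in I\times J_k:a>b\}|$, and $\operatorname{inv}(I,J_k)-\operatorname{inv}(I\setminus\{i\},J_k)=|\{j\in J_k:j<i\}|$. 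So after converting $t_{I,J_1,J_2}$ into the shuffled normalisation, applying $\psi_{\delta_i}\otimes\psi_{\delta_i}$, and converting back to compare with $t_{I\setminus\{i\},J_1,J_2}$, you reintroduce exactly the factor $(-1)^{|\{j\in J_1\cup J_2:j<i\}|}$ you were trying to kill. Concretely, take $n=3$, $d=2$, $I=\{2\}$, $J_1=\{1\}$, $J_2=\{3\}$, $i=2$: then $\psi_{\delta_2}(e_{\{1,2\}})=-e_{\{1\}}$ while $\psi_{\delta_2}(e_{\{2,3\}})=+e_{\{3\}}$, giving $\Delta_{E_{22}}(t_{\{2\},\{1\},\{3\}})=-t_{\emptyset,\{1\},\{3\}}$ rather than $+t_{\emptyset,\{1\},\{3\}}$.

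For what it is worth, the paper's own proof passes over this same point without justification: it records that $\psi_{\delta_i}(e_S)=\pm e_{S\setminus\{i\}}$ and then simply asserts that $(\psi_{\delta_i}\otimes\psi_{\delta_i})(e_S\otimes e_T)=e_{S\setminus\{i\}}\otimes e_{T\setminus\{i\}}$ with no sign, which is only automatic when $S=T$ (the case used in \Cref{lem:derisagree}). The claim would become correct either by redefining $t_{I,J_1,J_2}$ via the shuffled tensors $(e_I\wedge e_{J_1})$ and $(e_I\wedge e_{J_2})$ in place of $e_{I\cup J_1}$, $e_{I\cup J_2}$, or by inserting the sign $(-1)^{|\{j\in J_1\cup J_2:j<i\}|}$ into the displayed identity. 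The second claim of the lemma (that $\Delta_A$ sends $T^{d,n}_{J_1,J_2}$ into $T^{d-1,n}_{J_1,J_2}$) is unaffected and your argument for it is fine.
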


\begin{proof}
 We have seen in the proof of \Cref{lem:derisagree} that $$\psi_{\delta_i}(e_S)=\begin{cases}\pm e_{S\setminus\{i\}},\, i\in T\\ 0, j\not\in T \end{cases}.$$
 Thus we have that $(\psi_{\delta_i}\otimes\psi_{\delta_i})(e_S\otimes e_T)=e_{S\setminus\{i\}}\otimes e_{T\setminus\{i\}}$ if $i\in S\cap T$ and $0$ otherwise. Applying this to the definition of the $t_{I,J_1,J_2}$ and using the linearity of the map $X\mapsto\Delta_X$ shows that $\Delta_A(t_{I,J_1,J_2})=\sum_{1\leq i\leq n,\, i\in I}a_it_{I\setminus\{i\},J_1,J_2}$. The second claim is a direct consequence of the first claim.
\end{proof}

We denote by $W_{n-d,d}(J_1,J_2)$ the intersection of $W_{n-d,d}$ with $T^{d,n}_{J_1,J_2}$.

\begin{cor}
 $W_{n-d,d}$ is the direct sum of all $W_{n-d,d}(J_1,J_2)$.
\end{cor}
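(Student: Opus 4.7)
The plan is to leverage the fact that the $t_{I,J_1,J_2}$ form an orthonormal basis of $\Sym_2(\wedge^d \R^n)$, so that
\[
\Sym_2(\wedge^d \R^n) \;=\; \bigoplus_{(J_1,J_2)} T^{d,n}_{J_1,J_2},
\]
where the sum is taken over all admissible pairs $(J_1, J_2)$ (pairwise disjoint, equal cardinality, with $J_1 \leq J_2$). This is immediate from the definition of the subspaces $T^{d,n}_{J_1,J_2}$ as the span of those basis vectors $t_{I,J_1,J_2}$ with the given $(J_1, J_2)$ and $I$ ranging over $(d-|J_1|)$-subsets of $[n]$ disjoint from $J_1 \cup J_2$.

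The key input is \Cref{lem:tisclosed}: with $A = I$ the identity matrix (all $a_i = 1$), the formula $\Delta_I(t_{I,J_1,J_2}) = \sum_{i \in I} t_{I\setminus\{i\}, J_1, J_2}$ shows that $\Delta_I$ maps $T^{d,n}_{J_1,J_2}$ into $T^{d-1,n}_{J_1,J_2}$. In other words, with respect to the decomposition of $\Sym_2(\wedge^d \R^n)$ and $\Sym_2(\wedge^{d-1} \R^n)$ into the $T$-subspaces indexed by the same admissible pair $(J_1, J_2)$, the map $\Delta_I$ is block-diagonal.

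Since a direct sum decomposition is preserved by a block-diagonal endomorphism, the kernel decomposes as the direct sum of the kernels of the individual blocks:
\[
W_{n-d,d} \;=\; \ker(\Delta_I) \;=\; \bigoplus_{(J_1,J_2)} \ker\!\bigl(\Delta_I\big|_{T^{d,n}_{J_1,J_2}}\bigr) \;=\; \bigoplus_{(J_1,J_2)} W_{n-d,d}(J_1,J_2),
\]
where the last equality is just the definition of $W_{n-d,d}(J_1,J_2) = W_{n-d,d} \cap T^{d,n}_{J_1,J_2}$. There is no serious obstacle here; the statement is essentially a formal consequence of \Cref{lem:tisclosed} together with the orthonormal basis description of $\Sym_2(\wedge^d \R^n)$, and \Cref{lem:orth} (pairwise orthogonality of the $T^{d,n}_{J_1,J_2}$ with respect to $b_{\cL_d(D)}$) is not needed for this particular corollary but confirms that the decomposition is well-suited for the subsequent positivity arguments.
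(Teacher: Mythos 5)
Your proof is correct and follows essentially the same route as the paper's: both use the decomposition $\Sym_2(\wedge^d\R^n)=\bigoplus T^{d,n}_{J_1,J_2}$ together with \Cref{lem:tisclosed} to see that $\Delta_I$ respects this block structure (in domain and codomain), whence $\ker\Delta_I$ decomposes block-by-block. (Minor terminology nit: $\Delta_I$ is not an endomorphism, since it lowers $d$; but the block-diagonality argument for a map between two direct sums works just the same.)
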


\begin{proof}
 Because $\Sym_2(\wedge^d\R^n)$ is the direct sum of the $T_{J_1,J_2}^{d,n}$ we only have to show that $W_{n-d,d}$ is spanned by the $W_{n-d,d}(J_1,J_2)$. To this end write $a\in W_{n-d,d}$  as $a=\sum_{J_1,J_2}a_{J_1,J_2}$ for some $a_{J_1,J_2}\in T_{J_1,J_2}^{d,n}$. By definition of $W_{n-d,d}$ we have $$0=\Delta_I(a)=\sum_{J_1,J_2}\Delta_I(a_{J_1,J_2}).$$ \Cref{lem:tisclosed} implies that $\Delta_I(a_{J_1,J_2})\in T_{J_1,J_2}^{d-1,n}$. But since $\Sym_2(\wedge^{d-1}\R^n)$ is the direct sum of the $T_{J_1,J_2}^{d-1,n}$, this shows $\Delta_I(a_{J_1,J_2})=0$ and thus $a_{J_1,J_2}\in W_{n-d,d}(J_1,J_2)$.
\end{proof}

Our next goal is to prove that the decomposition of $W_{n-d,d}$ into the direct sum of all $W_{n-d,d}(J_1,J_2)$ behaves well with respect to the bilinear forms defined in the previous section.

\begin{cor}\label{cor:orthbzglab}
 Let $A\in\Sym_2(\R^n)$ be the diagonal matrix with diagonal $a\in\R^n$.
 The subspaces $W_{n-d,d}(J_1,J_2)\subset\Sym_2(\wedge^d\R^n)$ are pairwise orthogonal with respect to the bilinear form $b_{\cL_d(A)}$.
\end{cor}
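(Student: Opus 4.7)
The plan is very short: this corollary should follow immediately from \Cref{lem:orth} by passing to subspaces. Indeed, by the very definition
\[
W_{n-d,d}(J_1,J_2) = W_{n-d,d} \cap T^{d,n}_{J_1,J_2} \subset T^{d,n}_{J_1,J_2},
\]
so any two distinct summands $W_{n-d,d}(J_1,J_2)$ and $W_{n-d,d}(J_1',J_2')$ sit inside the distinct subspaces $T^{d,n}_{J_1,J_2}$ and $T^{d,n}_{J_1',J_2'}$ respectively.

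Now since $A$ is the diagonal matrix with diagonal $a \in \R^n$, \Cref{lem:orth} applies directly with $D = A$, giving that the subspaces $T^{d,n}_{J_1,J_2}$ are pairwise orthogonal with respect to $b_{\cL_d(A)}$. Orthogonality of subspaces is clearly inherited by subspaces, so the $W_{n-d,d}(J_1,J_2)$ are pairwise orthogonal with respect to $b_{\cL_d(A)}$ as well.

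There is no genuine obstacle here; the only thing to notice is that \Cref{lem:orth} was stated for \emph{all} diagonal matrices $D$ (not just $D = I$), and the present corollary simply specializes to $D = A$ and then restricts to the subspaces cut out by $\ker \Delta_I$. Accordingly I expect the written proof to be one sentence citing \Cref{lem:orth}.
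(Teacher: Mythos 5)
Your argument is correct and matches the paper's proof exactly: the paper also derives this corollary in one line by invoking \Cref{lem:orth} with $D=A$ and using that each $W_{n-d,d}(J_1,J_2)$ is contained in the corresponding $T^{d,n}_{J_1,J_2}$. Nothing to add.
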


\begin{proof}
 This follows from \Cref{lem:orth}.
\end{proof}

\begin{cor}\label{cor:wisclosed}
 The image of $W_{n-d,d}(J_1,J_2)$ under the map $\Delta_A$ is contained in $W_{n-d+1,d-1}(J_1,J_2)$.
\end{cor}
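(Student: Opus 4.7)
The plan is to combine the two pieces of structure: the decomposition according to $T^{d,n}_{J_1,J_2}$ (which behaves well under $\Delta_A$ by \Cref{lem:tisclosed}) and the definition of $W_{n-d,d}$ as the kernel of $\Delta_I$ (which interacts well with any $\Delta_A$ via \Cref{lem:deltacommutes}).

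First, I would verify the ``$T$-part'' of the containment. Take any $x \in W_{n-d,d}(J_1,J_2) = W_{n-d,d} \cap T^{d,n}_{J_1,J_2}$. By \Cref{lem:tisclosed}, applied to the basis vectors $t_{I,J_1,J_2}$ spanning $T^{d,n}_{J_1,J_2}$ and then extended by linearity, one has $\Delta_A(T^{d,n}_{J_1,J_2}) \subset T^{d-1,n}_{J_1,J_2}$. Hence $\Delta_A(x) \in T^{d-1,n}_{J_1,J_2}$.

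Next, I would verify the ``kernel part'' of the containment, namely that $\Delta_A(x) \in W_{n-d+1,d-1}$, i.e., $\Delta_I(\Delta_A(x)) = 0$. This is immediate from \Cref{lem:deltacommutes}, which gives $\Delta_I \circ \Delta_A = \Delta_A \circ \Delta_I$; since $x \in W_{n-d,d}$ means $\Delta_I(x) = 0$, we obtain
\[
\Delta_I(\Delta_A(x)) \;=\; \Delta_A(\Delta_I(x)) \;=\; 0.
\]
Combining the two containments gives $\Delta_A(x) \in W_{n-d+1,d-1} \cap T^{d-1,n}_{J_1,J_2} = W_{n-d+1,d-1}(J_1,J_2)$, which is the desired statement.

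I do not expect any serious obstacle here: the result is essentially a bookkeeping consequence of the two preceding lemmas. The only thing one needs to remember is that the index convention $W_{n-d,d}$ refers to elements of $\Sym_2(\wedge^d\R^n)$, so that after applying $\Delta_A$ (which lowers $d$ by one) the correct target module is $W_{n-(d-1),d-1} = W_{n-d+1,d-1}$, which is consistent with $T^{d-1,n}_{J_1,J_2}$ as well.
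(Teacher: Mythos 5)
Your proof is correct and follows precisely the paper's route: the paper likewise cites \Cref{lem:tisclosed} for the $T$-containment and the commutation $\Delta_A\circ\Delta_I=\Delta_I\circ\Delta_A$ (from \Cref{lem:deltacommutes}) for the kernel condition, then intersects. You have simply written out the one-line argument in full, and your bookkeeping of indices is accurate.
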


\begin{proof}
 This follows from \Cref{lem:tisclosed} and $\Delta_A\circ\Delta_I=\Delta_I\circ\Delta_A$.
\end{proof}

\begin{cor}\label{cor:orthbzglc}
 Let $A\in\Sym_2(\R^n)$ be the diagonal matrix with diagonal $a\in V_{n-1,1}\subset\R^n$. Let $J^i_1,J^i_2\subset[n]$, $i=1,2$, be two different pairs of disjoint subsets such that $|J^i_1|=|J^i_2|=k$ and $J^i_1\leq J^i_2$ for $i=1,2$. Then the subspaces $W_{n-d,d}(J_1^1,J_2^1)$ and $W_{n-d-1,d+1}(J_1^2,J_2^2)$ are orthogonal with respect to the bilinear form  $C_{d,n}(A)$.
\end{cor}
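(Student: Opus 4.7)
The plan is to reduce the claim to the orthogonality of the subspaces $T^{d,n}_{J_1,J_2} \subset \Sym_2(\wedge^d\R^n)$ with respect to the standard trace inner product, which is immediate from the fact that the $t_{I,J_1,J_2}$ form an orthonormal basis.

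More precisely, recall that by the definition of $C_{d,n}$ following \Cref{lem:resc}, we have
\[
C_{d,n}(A)(F,G) \;=\; \langle F, \Delta_A G\rangle
\]
for any $F \in W_{n-d,d}(J_1^1,J_2^1) \subset W_{n-d,d}$ and any $G \in W_{n-d-1,d+1}(J_1^2,J_2^2) \subset W_{n-d-1,d+1}$. First I would apply \Cref{cor:wisclosed} with $d$ replaced by $d+1$ to the diagonal matrix $A$: this yields $\Delta_A G \in W_{n-d,d}(J_1^2,J_2^2) \subset T^{d,n}_{J_1^2,J_2^2}$. On the other hand $F \in W_{n-d,d}(J_1^1,J_2^1) \subset T^{d,n}_{J_1^1,J_2^1}$.

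Since the basis $\{t_{I,J_1,J_2}\}$ of $\Sym_2(\wedge^d\R^n)$ is orthonormal with respect to the trace inner product, the subspaces $T^{d,n}_{J_1,J_2}$ associated to distinct index pairs are pairwise orthogonal. By hypothesis $(J_1^1,J_2^1) \neq (J_1^2,J_2^2)$, hence $\langle F, \Delta_A G\rangle = 0$, which is exactly the desired vanishing of $C_{d,n}(A)(F,G)$.

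There is essentially no obstacle here: the traceless condition on $A$ (i.e.\ $a \in V_{n-1,1}$) is only required so that $A$ lies in the domain $W_{n-1,1}$ of $C_{d,n}$; the actual orthogonality is controlled entirely by the $T$-block decomposition, and \Cref{cor:wisclosed} guarantees that $\Delta_A$ respects this decomposition.
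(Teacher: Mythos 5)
Your proof is correct and is essentially the same argument the paper gives: the paper's one-line proof (``By definition of $C_{d,n}(A)$, this is a direct consequence of \Cref{cor:wisclosed}'') unpacks to exactly your reasoning, namely that $\Delta_A$ sends $W_{n-d-1,d+1}(J_1^2,J_2^2)$ into $W_{n-d,d}(J_1^2,J_2^2)\subset T^{d,n}_{J_1^2,J_2^2}$, which is orthogonal to $T^{d,n}_{J_1^1,J_2^1}\supset W_{n-d,d}(J_1^1,J_2^1)$ under the trace inner product. Your closing observation about the role of the traceless hypothesis is a nice sanity check, but otherwise this is the paper's proof spelled out.
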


\begin{proof}
 By definition of $C_{d,n}(A)$, this is a direct consequence of \Cref{cor:wisclosed}.
\end{proof}

Now we construct an embedding of $W_{n-d,d}(J_1,J_2)$ to $V_{n-d,d}$ which is compatible with the bilinear forms from the previous section. This allows us to deduce positive semidefiniteness on $W_{n-d,d}(J_1,J_2)$ from positive semidefiniteness on $V_{n-d,d}$.

\vspace{4pt}

Let $k=|J_1|=|J_2|$ and write $J_1=\{j_1,\ldots,j_k\}$ and $J_2=\{j'_1,\ldots,j'_k\}$ with $j_1<\cdots<j_k$ and $j'_1<\cdots<j'_k$. For $k\leq d\leq n$ consider the linear map defined by:

$$\rho_{d,n}: T_{J_1,J_2}^{d,n}\to\Ma_{d,n},\, t_{I,J_1,J_2}\mapsto (x_{j_1}-x_{j_1'})\cdots(x_{j_k}-x_{j_k'})  \cdot  \prod_{i\in I} x_i .$$

\begin{lem}
 The image of $W_{n-d,d}(J_1,J_2)$ under $\rho_{d,n}$ is contained in $V_{n-d,d}\subset W_{n-d,d}$.
\end{lem}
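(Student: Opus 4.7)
The strategy is to establish the intertwining identity
\[
\De_e \circ \rho_{d,n} \;=\; \rho_{d-1,n} \circ \Delta_I\big|_{T^{d,n}_{J_1,J_2}}
\]
as linear maps $T^{d,n}_{J_1,J_2} \to \Ma_{d-1,n}$, where $I\in\Sym_2(\R^n)$ is the identity matrix. Once this is shown, the lemma is immediate: for $w \in W_{n-d,d}(J_1,J_2) \subset \ker \Delta_I$, we obtain $\De_e(\rho_{d,n}(w)) = \rho_{d-1,n}(\Delta_I(w)) = 0$, so $\rho_{d,n}(w)$ lies in $\Ma_{d,n} \cap \ker \De_e = V_{n-d,d}$ by \Cref{cor:kern}.

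To verify the identity it suffices to check it on the basis vectors $t_{I,J_1,J_2}$ spanning $T^{d,n}_{J_1,J_2}$. By definition,
\[
\rho_{d,n}(t_{I,J_1,J_2}) \;=\; \prod_{s=1}^{k}(x_{j_s}-x_{j_s'})\cdot \prod_{i\in I} x_i.
\]
Applying $\De_e = \sum_{\ell=1}^n \partial/\partial x_\ell$ via the Leibniz rule, the contribution of differentiating any of the linear factors $(x_{j_s}-x_{j_s'})$ vanishes since $\De_e(x_{j_s}-x_{j_s'}) = 1-1 = 0$. Hence only the monomial factor $\prod_{i\in I} x_i$ gets differentiated effectively, yielding
\[
\De_e\rho_{d,n}(t_{I,J_1,J_2}) \;=\; \prod_{s=1}^{k}(x_{j_s}-x_{j_s'})\cdot \sum_{i\in I}\prod_{i'\in I\setminus\{i\}} x_{i'} \;=\; \sum_{i\in I}\rho_{d-1,n}(t_{I\setminus\{i\},J_1,J_2}).
\]
On the other hand, \Cref{lem:tisclosed} applied with $A=I$ (so $a=(1,\ldots,1)$) gives $\Delta_I(t_{I,J_1,J_2}) = \sum_{i\in I} t_{I\setminus\{i\},J_1,J_2}$, and applying $\rho_{d-1,n}$ yields the same expression. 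This establishes the intertwining identity and proves the lemma.

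There is no serious obstacle here; the only conceptual point is to notice that the antisymmetric linear factors $(x_{j_s}-x_{j_s'})$ are annihilated by $\De_e$, so under the isomorphism $\rho_{d,n}$ the operator $\De_e$ on the target matches exactly the residual action of $\Delta_I$ on the $I$-component of $t_{I,J_1,J_2}$ described in \Cref{lem:tisclosed}. Note also that $\rho_{d,n}(t_{I,J_1,J_2})$ is genuinely multiaffine of degree $d$ because $I$, $J_1$ and $J_2$ are pairwise disjoint, so the image of $\rho_{d,n}$ is indeed contained in $\Ma_{d,n}$, justifying the final identification with $V_{n-d,d}$.
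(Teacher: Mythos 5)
Your proof is correct, and it is essentially the paper's argument, just phrased more cleanly: the paper establishes the same intertwining identity $\De_e\circ\rho_{d,n}=\rho_{d-1,n}\circ\Delta_I$ on $T^{d,n}_{J_1,J_2}$ implicitly (it writes $\rho_{d,n}(\alpha)$ as $(x_{j_1}-x_{j_1'})\cdots(x_{j_k}-x_{j_k'})\cdot f$, uses \Cref{lem:tisclosed} to get $0=\rho_{d-1,n}(\Delta_I\alpha)=(\text{product})\cdot\De_ef$, concludes $\De_ef=0$, then invokes $\De_e(x_{j_l}-x_{j_l'})=0$ and Leibniz), while you state and verify the intertwining directly on the basis $t_{I,J_1,J_2}$ and then substitute $\Delta_I(w)=0$. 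The ingredients — \Cref{lem:tisclosed}, the vanishing of $\De_e$ on the antisymmetric linear factors, and the identification $V_{n-d,d}=\Ma_{d,n}\cap\ker\De_e$ from \Cref{cor:kern} — are identical.
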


\begin{proof}
 Let $\alpha\in W_{n-d,d}(J_1,J_2)$. Then we can write $$\rho_{d,n}(\alpha)=(x_{j_1}-x_{j_1'})\cdots(x_{j_k}-x_{j_k'})\cdot f$$ for some multiaffine polynomial $f$ of degree $d-2k$ in the variables indexed by $I$. By \Cref{lem:tisclosed} we have that $$0=\rho_{d-1,n}(\Delta_I\alpha)=(x_{j_1}-x_{j_1'})\cdots(x_{j_k}-x_{j_k'})\cdot \De_ef.$$This implies that $\De_e f=0$ since $J_1$ and $J_2$ are disjoint.
 But since we have for all $1\leq l\leq k$ that $\De_e (x_{j_l}-x_{j_l'})=0$, the derivative of the entire product in direction of $e$ vanishes, meaning that is contained in $W_{n-d,d}$.
\end{proof}

Recall that we consider $\Ma_{d,n}$ as an $\fS_n$-invariant subspace of $\Sym_2(\wedge^d\R^n)$ as in \Cref{eqn:mainsym}.

\begin{lem}\label{lem:orth2}
 Let $D\in\Sym_2(\R^n)$ be a diagonal matrix. Let $I,I'\in\binom{[n]\setminus(J_1\cup J_2)}{d-k}$. If $I\neq I'$, then $b_{\cL_d(D)}(\rho_{d,n}(t_{I,J_1,J_2}),\rho_{d,n}(t_{I',J_1,J_2}))=0$. 
\end{lem}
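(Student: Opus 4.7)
The plan is to expand $\rho_{d,n}(t_{I,J_1,J_2})$ via the embedding (\ref{eqn:mainsym}) of $\Ma_{d,n}$ into $\Sym_2(\wedge^d\R^n)$, write it as a signed sum of basis elements of the form $t_{S,\emptyset,\emptyset}$, and then reduce to \Cref{lem:orth1}.

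First I would expand
\[
(x_{j_1}-x_{j_1'})\cdots(x_{j_k}-x_{j_k'})=\sum_{L\subseteq[k]}(-1)^{|L|}\prod_{l\in L}x_{j_l'}\prod_{l\notin L}x_{j_l},
\]
so that $\rho_{d,n}(t_{I,J_1,J_2})=\sum_{L\subseteq[k]}(-1)^{|L|}\prod_{i\in S_L}x_i$, where $S_L:=I\cup\{j_l':l\in L\}\cup\{j_l:l\notin L\}$. Since $I,J_1,J_2$ are pairwise disjoint, each $S_L$ is a $d$-element subset of $[n]$. Under (\ref{eqn:mainsym}) each monomial $\prod_{i\in S_L}x_i$ corresponds to $e_{S_L}\otimes e_{S_L}=t_{S_L,\emptyset,\emptyset}$, so inside $\Sym_2(\wedge^d\R^n)$
\[
\rho_{d,n}(t_{I,J_1,J_2})=\sum_{L\subseteq[k]}(-1)^{|L|}\,t_{S_L,\emptyset,\emptyset},
\]
and analogously $\rho_{d,n}(t_{I',J_1,J_2})=\sum_{L'\subseteq[k]}(-1)^{|L'|}\,t_{S'_{L'},\emptyset,\emptyset}$ with $S'_{L'}:=I'\cup\{j_l':l\in L'\}\cup\{j_l:l\notin L'\}$.

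Next, the key observation is that for every $L$ the intersection $S_L\cap([n]\setminus(J_1\cup J_2))$ equals $I$, and likewise $S'_{L'}\cap([n]\setminus(J_1\cup J_2))=I'$. Since $I\neq I'$ by assumption, we have $S_L\neq S'_{L'}$ for all choices of $L$ and $L'$. Consequently the triples $(S_L,\emptyset,\emptyset)$ and $(S'_{L'},\emptyset,\emptyset)$ are distinct, and \Cref{lem:orth1} yields $b_{\cL_d(D)}(t_{S_L,\emptyset,\emptyset},t_{S'_{L'},\emptyset,\emptyset})=0$ for every $L,L'$. Summing over $L$ and $L'$ with the signs gives $b_{\cL_d(D)}(\rho_{d,n}(t_{I,J_1,J_2}),\rho_{d,n}(t_{I',J_1,J_2}))=0$, as required.

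There is essentially no obstacle once one notices that the embedding (\ref{eqn:mainsym}) lands in the span of the "diagonal" basis vectors $t_{S,\emptyset,\emptyset}$; the entire content of the lemma is the bookkeeping that the $J$-coordinates get reshuffled but never mix with the $I$-coordinates, so that $I$ can be recovered from $S_L$.
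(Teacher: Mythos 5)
Your proof is correct and takes essentially the same approach as the paper: the key observation is that the $d$-subsets $S$ indexing the monomials of $\rho_{d,n}(t_{I,J_1,J_2})$ all satisfy $S\cap\bigl([n]\setminus(J_1\cup J_2)\bigr)=I$, so the monomial supports for $I$ and for $I'$ are disjoint, and then \Cref{lem:orth1} (applied to the triples $(S,\emptyset,\emptyset)$) finishes it. The paper states this in one sentence; you have merely made the bookkeeping explicit.
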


\begin{proof}
 This follows directly from the fact that the set of monomials that appear in $\rho_{d,n}(t_{I,J_1,J_2})$ is disjoint from the set of monomials in $\rho_{d,n}(t_{I',J_1,J_2})$.
\end{proof}

\begin{lem}\label{lem:min1}
 Let $A\in\Sym_2(\R^n)$ be the diagonal matrix with diagonal $a\in\R^n$.
 For all $f\in T_{J_1,J_2}^{d,n}$ we have that $$2^{k}\cdot b_{\cL_d(A)}(f,f)= b_{\cL_d(A)}(\rho_{d,n}(f),\rho_{d,n}(f)).$$
\end{lem}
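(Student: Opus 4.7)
The plan is to reduce the identity to a diagonal computation in the orthonormal basis $\{t_{I,J_1,J_2}\}_I$ of $T^{d,n}_{J_1,J_2}$, and then evaluate both sides explicitly. First, both $b_{\cL_d(A)}(f,f)$ and $b_{\cL_d(A)}(\rho_{d,n}(f),\rho_{d,n}(f))$ are quadratic forms in $f\in T^{d,n}_{J_1,J_2}$, so by polarization it suffices to check the identity
$$2^k\, b_{\cL_d(A)}(t_{I,J_1,J_2},t_{I',J_1,J_2}) = b_{\cL_d(A)}(\rho_{d,n}(t_{I,J_1,J_2}),\rho_{d,n}(t_{I',J_1,J_2}))$$
for all pairs of basis vectors. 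For $I\neq I'$ both sides vanish: the left-hand side by \Cref{lem:orth1}, the right-hand side by \Cref{lem:orth2}. So it remains to verify the diagonal case $I=I'$ for each $I\in\binom{[n]\setminus(J_1\cup J_2)}{d-k}$.

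For the diagonal case I would use the fact that $\cL_d(A)$ is diagonal in the orthonormal basis $\{e_S:S\in\binom{[n]}{d}\}$ of $\wedge^d\R^n$, with $(S,S)$-entry equal to $\sum_{i\in S}a_i$. Writing $v_1=e_{I\cup J_1}$ and $v_2=e_{I\cup J_2}$, the symmetric tensor $t_{I,J_1,J_2}$ corresponds to $\frac{\sqrt2}{2}$ times the swap operator of $v_1$ and $v_2$ on $\mathrm{span}(v_1,v_2)$ and zero on its orthogonal complement. A direct trace computation then gives
$$b_{\cL_d(A)}(t_{I,J_1,J_2},t_{I,J_1,J_2}) \;=\; \sum_{i\in I}a_i + \tfrac{1}{2}\sum_{l=1}^k (a_{j_l}+a_{j_l'}).$$

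For the right-hand side I would expand
$$\rho_{d,n}(t_{I,J_1,J_2}) \;=\; \prod_{l=1}^k (x_{j_l}-x_{j_l'})\cdot\prod_{i\in I}x_i \;=\; \sum_{\epsilon\in\{0,1\}^k} (-1)^{|\epsilon|}\prod_{i\in I\cup S_\epsilon} x_i,$$
where $S_\epsilon\subset J_1\cup J_2$ picks $j_l$ when $\epsilon_l=0$ and $j_l'$ when $\epsilon_l=1$. Since $J_1\cap J_2=\emptyset$, distinct $\epsilon$ yield distinct index sets $I\cup S_\epsilon$, so under the embedding (\ref{eqn:mainsym}) these become $2^k$ pairwise orthogonal rank-one diagonal projections $e_{I\cup S_\epsilon}\otimes e_{I\cup S_\epsilon}$. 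All cross terms in $b_{\cL_d(A)}(\rho_{d,n}(t_{I,J_1,J_2}),\rho_{d,n}(t_{I,J_1,J_2}))$ therefore vanish (and the signs disappear upon squaring), leaving
$$\sum_{\epsilon\in\{0,1\}^k}\sum_{i\in I\cup S_\epsilon}a_i \;=\; 2^k\sum_{i\in I}a_i + 2^{k-1}\sum_{l=1}^k (a_{j_l}+a_{j_l'}),$$
which is exactly $2^k$ times the value computed on the left.

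The main obstacle is bookkeeping rather than conceptual: one has to track the $\frac{\sqrt2}{2}$ normalization in the definition of $t_{I,J_1,J_2}$ carefully when interpreting it as a self-adjoint operator, and one needs the disjointness of $J_1$ and $J_2$ to ensure that the expansion of $\prod_l(x_{j_l}-x_{j_l'})$ genuinely gives $2^k$ distinct squarefree monomials so that the cross terms on the right are eliminated cleanly.
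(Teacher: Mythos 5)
Your proposal is correct and follows essentially the same approach as the paper's proof: reduce to the basis $\{t_{I,J_1,J_2}\}$ using the orthogonality from Lemmas \ref{lem:orth1} and \ref{lem:orth2}, then compute both diagonal values explicitly and observe they differ by the factor $2^k$. You merely phrase the reduction via polarization and spell out the trace computation that the paper asserts.
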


\begin{proof}
 It suffices to show the claim for $f=t_{I,J_1,J_2}$ with $I\in\binom{[n]}{d-|J_1|}$ as these elements are a basis of $T_{J_1,J_2}^{d,n}$ orthogonal with respect to $b_{\cL_d(A)}$ (\Cref{lem:orth1}) and their images under $\rho_{d,n}$ are pairwise orthogonal with respect to $b_{\cL_d(A)}$ as well (\Cref{lem:orth2}). Since $\rho_{d,n}$ is the identity if $J_1=\emptyset$, we can assume $k=|J_1|>0$. Then we have $$b_{\cL_d(A)}(t_{I,J_1,J_2},t_{I,J_1,J_2})= \tr(t_{I,J_1,J_2} \cdot \cL_d(A)  \cdot t_{I,J_1,J_2})=\sum_{i\in I} a_i+\frac{1}{2}\sum_{j\in J_1\cup J_2} a_j.$$ On the other hand, the polynomial $\rho_{d,n}(f)=(x_{j_1}-x_{j_1'})\cdots(x_{j_k}-x_{j_k'})  \cdot  \prod_{i\in I} x_i$ consists of $2^k$ monomials all of whose coefficients are $\pm 1$. Each $x_i$ for $i\in I$ appears in every such monomial and each $x_j$ for $j\in J_1\cup J_2$ appears in $2^{k-1}$ of those. Thus we have \[b_{\cL_d(A)}(\rho_{d,n}(f),\rho_{d,n}(f))=2^k\sum_{i\in I} a_i+2^{k-1}\sum_{j\in J_1\cup J_2} a_j.\qedhere\]
\end{proof}

\begin{lem}\label{lem:min2}
 Let $A\in W_{n-1,1}$ be the diagonal matrix with diagonal $a\in V_{n-1,1}$ and consider the bilinear form $C_{d,n}(A)\in W_{n-d,d}\otimes W_{n-d-1,d+1}$. 
 For all $f\in W_{n-d,d}(J_1,J_2)$ and $g\in W_{n-d-1,d+1}(J_1,J_2)$ we have that $$2^{k}\cdot C_{d,n}(A) (f,g)= C_{d,n}(A)(\rho_{d,n}(f),\rho_{d+1,n}(g)).$$
\end{lem}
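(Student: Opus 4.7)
The plan is to expand $f$ and $g$ in the orthonormal basis $\{t_{I,J_1,J_2}\}$ of $T^{d,n}_{J_1,J_2}$, respectively $T^{d+1,n}_{J_1,J_2}$, compute both sides of the claimed identity directly, and compare. Writing $k=|J_1|=|J_2|$, the key inputs are: the formula $C_{d,n}(A)(\cdot,\cdot)=\langle\cdot,\Delta_A\cdot\rangle$ from the definition of $C_{d,n}$; Lemma~\ref{lem:tisclosed}, which describes $\Delta_A$ on the $t$-basis when $A$ is diagonal; and Lemma~\ref{lem:derisagree}, which identifies the action of $\Delta_A$ on $\rho_{d+1,n}(g)\in\Ma_{d+1,n}$ with the ordinary derivative $\De_a$.

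Writing $f=\sum_I\lambda_I\,t_{I,J_1,J_2}$ and $g=\sum_{I'}\mu_{I'}\,t_{I',J_1,J_2}$ with $I,I'$ ranging over subsets of $[n]\setminus(J_1\cup J_2)$ of size $d-k$ and $d+1-k$ respectively, Lemma~\ref{lem:tisclosed} together with orthonormality of the $t$-basis yields
\[
C_{d,n}(A)(f,g)=\langle f,\Delta_A g\rangle=\sum_{I'}\mu_{I'}\sum_{i\in I'}a_i\,\lambda_{I'\setminus\{i\}}.
\]
For the other side, the Leibniz rule applied to $\rho_{d+1,n}(t_{I',J_1,J_2})=\prod_l(x_{j_l}-x_{j_l'})\prod_{i\in I'}x_i$ splits $\De_a\rho_{d+1,n}(g)$ into two kinds of contributions: those in which $\De_a$ hits the product $\prod_{i\in I'}x_i$, which yield exactly $\sum_{I'}\mu_{I'}\sum_{i\in I'}a_i\,\rho_{d,n}(t_{I'\setminus\{i\},J_1,J_2})$, and those in which $\De_a$ hits a factor $(x_{j_l}-x_{j_l'})$, producing polynomials of the form $(a_{j_l}-a_{j_l'})\prod_{l'\neq l}(x_{j_{l'}}-x_{j_{l'}'})\prod_{i\in I'}x_i$.

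The crucial observation is that every monomial appearing in a ``difference-factor'' contribution omits both $x_{j_l}$ and $x_{j_l'}$ for some $l$, whereas every monomial of $\rho_{d,n}(f)$ contains exactly one element of each pair $\{x_{j_{l''}},x_{j_{l''}'}\}$. Since~\eqref{eqn:mainsym} embeds $\Ma_{d,n}$ as the diagonal matrices in $\Sym_2(\wedge^d\R^n)$ and the induced scalar product just counts shared monomials, these contributions are orthogonal to $\rho_{d,n}(f)$. Furthermore, the polynomials $\rho_{d,n}(t_{I,J_1,J_2})$ for different $I$ have disjoint monomial supports, and each is a $\pm 1$-combination of exactly $2^k$ monomials, so $\langle\rho_{d,n}(t_{I,J_1,J_2}),\rho_{d,n}(t_{I'',J_1,J_2})\rangle=2^k\delta_{I,I''}$. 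Assembling everything gives
\[
\langle\rho_{d,n}(f),\De_a\rho_{d+1,n}(g)\rangle=2^k\sum_{I'}\mu_{I'}\sum_{i\in I'}a_i\,\lambda_{I'\setminus\{i\}}=2^k\,C_{d,n}(A)(f,g),
\]
as desired. The main obstacle is the verification that the difference-factor contributions are orthogonal to $\rho_{d,n}(f)$; this is a purely combinatorial bookkeeping argument about monomial supports, but it is precisely what forces the factor $2^k$ to emerge without any correction terms.
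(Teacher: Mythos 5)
Your proof is correct and takes essentially the same approach as the paper. The paper reduces by linearity to $a=\delta_i$ and checks the identity on the basis vectors $t_{I,J_1,J_2}$, whereas you keep $a$ general, expand $f$ and $g$ in the $t$-basis, and use the Leibniz rule to isolate the ``difference-factor'' contributions --- but the crucial step in both cases is the same monomial-support argument showing those contributions are orthogonal to $\rho_{d,n}(f)$, together with the count $\langle\rho_{d,n}(t_{I,J_1,J_2}),\rho_{d,n}(t_{I,J_1,J_2})\rangle=2^k$.
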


\begin{proof}
 $C_{d,n}(A) (f,g)$ is defined to be the scalar product $\langle f,\Delta_A(g)\rangle$ of $f$ with $\Delta_A(g)$.  Since $\Delta_A$ is linear in $A$, it suffices to show $$2^k\cdot\langle f,\Delta_A(g)\rangle=\langle \rho_{d,n}(f),\Delta_A(\rho_{d+1,n}(g))\rangle$$ for $A$ the diagonal matrix whose diagonal $a=\delta_i$ is the $i$th unit vector, and all $f\in T^{d,n}_{J_1,J_2}$, $g\in T^{d+1,n}_{J_1,J_2}$. 
 In this case, using \Cref{lem:tisclosed}, we have that
 $$\langle t_{I,J_1,J_2},\Delta_A(t_{I',J_1,J_2})\rangle=\begin{cases}
                                            1 & \textrm{if } I=I'\setminus i, \\ 0 & \textrm{otherwise.}
                                           \end{cases}$$
 Therefore, we have to show that
 $$\langle \rho_{d,n}(t_{I,J_1,J_2}),\Delta_A(\rho_{d+1,n}(t_{I',J_1,J_2}))\rangle=\begin{cases}
                                            2^k & \textrm{if } I=I'\setminus i, \\ 0 & \textrm{otherwise.}
                                           \end{cases}$$
 In the case $I=I'\setminus i$ we have that $$\rho_{d,n}(t_{I'\setminus i,J_1,J_2})=\Delta_A(\rho_{d+1,n}(t_{I',J_1,J_2}))$$ is a multiaffine polynomial with exactly $2^k$ monomials all of whose coefficients are $\pm1$. This shows that $\langle \rho_{d,n}(t_{I'\setminus i,J_1,J_2}),\Delta_A(\rho_{d+1,n}(t_{I',J_1,J_2}))\rangle=2^k$. If $I\neq I'\setminus i$, then there is a $l\in I'\setminus(I\cup\{i\})$. Every monomial of $\Delta_A(\rho_{d+1,n}(t_{I',J_1,J_2}))$ but no monomial of $t_{I,J_1,J_2}$ is divisible by $x_l$. Thus their scalar product is zero.
\end{proof}

\section{Proof of the main theorem}\label{sec:mainres}
Let $V$ be a short representation of $\fS_n$ and $\varphi:\R^n\to\Sym_2(V)$ a $\fS_n$-linear map. We denote by $\varphi_i$ the restriction of $\varphi$ to the submodule of $\R^n$ isomorphic to $V_{n-i,i}$, $i=0,1$. We can write $V$ as a direct sum $\oplus_{j=1}^{m} V_{j}$ of irreducible $\fS_n$-submodules where each $V_j$ is isomorphic to $V_{n-\epsilon(j),\epsilon(j)}$ with $0\leq\epsilon(j)\leq\floor{\frac{n}{2}}$. After relabeling we can assume that $\epsilon(j_1)\leq\epsilon(j_2)$ if $j_1\leq j_2$. Then we have that $$\Sym_2(V)=\bigoplus_{j=1}^m\Sym_2(V_j)\oplus\bigoplus_{1\leq k<l\leq m} (V_k\otimes V_l).$$Each of these summands contains at most one copy of $V_n$ and $V_{n-1,1}$ by \Cref{lem:stdinsym} and \Cref{lem:stdintens}. Therefore, the map $\varphi_0:V_n\to\Sym_2(V)$ is the direct sum of the maps $a_{jj}\cdot\alpha_{\epsilon(j),n}:V_n\to\Sym_2(V_j)$ and $a_{kl}\cdot\alpha_{\epsilon(k),n}:V_n\to V_k\otimes V_l$ (if $k<l$ and $\epsilon(k)=\epsilon(l)$) for suitable real numbers $a_{jj}$ and $a_{kl}$. Analogously, $\varphi_1$ is the direct sum of the maps $b_{jj}\cdot\beta_{\epsilon(j),n}:V_{n-1,1}\to\Sym_2(V_j)$, $b_{kl}\cdot\beta_{\epsilon(k),n}:V_{n-1,1}\to V_k\otimes V_l$ (if $k<l$ and $\epsilon(k)=\epsilon(l)$) and $c_{kl}\cdot\gamma_{\epsilon(k),n}:V_{n-1,1}\to V_k\otimes V_l$ (if $k<l$ and $\epsilon(k)+1=\epsilon(l)$) for suitable real numbers $b_{jj}$, $b_{kl}$ and $c_{kl}$.

From this we define the $\Or(n)$-module $W$ as $\oplus_{j=1}^{m} W_{j}$ where $W_j$ is an $\Or(n)$-module isomorphic to $W_{n-\epsilon(j),\epsilon(j)}$. We define $\Phi: \Sym_2(\R^n)\to\Sym_2(W)$ as the direct sum of the maps $\Phi_i: W_{n-i,i}\to\Sym_2(W)$, $i=0,1$, which are defined as follows. The map $\Phi_0:W_n\to\Sym_2(W)$ is defined to be the direct sum of the maps $a_{jj}\cdot A_{\epsilon(j),n}:W_n\to\Sym_2(W_j)$ and $a_{kl}\cdot A_{\epsilon(k),n}:W_n\to W_k\otimes W_l$ (if $k<l$ and $\epsilon(k)=\epsilon(l)$). Analogously, $\Phi_1$ is the direct sum of the maps $b_{jj}\cdot B_{\epsilon(j),n}:W_{n-1,1}\to\Sym_2(W_j)$, $b_{kl}\cdot B_{\epsilon(k),n}:W_{n-1,1}\to W_k\otimes W_l$ (if $k<l$ and $\epsilon(k)=\epsilon(l)$) and $c_{kl}\cdot C_{\epsilon(k),n}:W_{n-1,1}\to W_k\otimes V_l$ (if $k<l$ and $\epsilon(k)+1=\epsilon(l)$). The map $\Phi: \Sym_2(\R^n)\to\Sym_2(W)$ is $\Or(n)$-linear by construction. 

\begin{ex}
 If $V=\Ma_{d,n}$, then $W\cong\Sym_2(\wedge^d\R^n)$.
\demo \end{ex}

Further the inclusion $V_{n-d,d}\subset W_{n-d,d}$ defined in \Cref{sec:on} induces an inclusion $V\subset W$. We have:

\begin{prop}\label{prop:resphi}
 Let $\Lambda\in\Sym_2(\R^n)$ be the diagonal matrix with diagonal $\lambda\in\R^n$. Then the restriction of the bilinear form $\Phi(\Lambda)$ to $V$ is $\varphi(\lambda)$.
\end{prop}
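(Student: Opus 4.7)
The plan is to prove this proposition by reducing it to \Cref{lem:resab} and \Cref{lem:resc} via a block-by-block linearity argument. First I would decompose $\lambda=\lambda_0+\lambda_1$ according to the splitting $\R^n=V_n\oplus V_{n-1,1}$, so $\Lambda=\Lambda_0+\Lambda_1$ where $\Lambda_0$ is a scalar multiple of the identity (thus lying in the trivial $\Or(n)$-module $W_n\subset\Sym_2(\R^n)$) and $\Lambda_1$ is a traceless diagonal matrix (thus lying in $W_{n-1,1}\subset\Sym_2(\R^n)$). By $\R$-linearity of $\Phi$ and $\varphi$ it suffices to establish the claim separately for the pair $(\Lambda_0,\lambda_0)$ using $\Phi_0$ and $\varphi_0$, and for $(\Lambda_1,\lambda_1)$ using $\Phi_1$ and $\varphi_1$.

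Next I would exploit the explicit block decomposition recorded in the construction of $\Phi$: both $\Sym_2(V)$ and $\Sym_2(W)$ split as $\bigoplus_{j}\Sym_2(V_j)\oplus\bigoplus_{k<l}(V_k\otimes V_l)$ and $\bigoplus_{j}\Sym_2(W_j)\oplus\bigoplus_{k<l}(W_k\otimes W_l)$ respectively, and the inclusion $V\hookrightarrow W$ coming from $V_{n-d,d}\subset W_{n-d,d}$ sends each block into the corresponding block. Hence restricting $\Phi(\Lambda)$ to $V$ is the same as restricting each of its block components to the corresponding $\Sym_2(V_j)$ or $V_k\otimes V_l$. After this identification the proposition reduces to a term-by-term identity comparing the coefficients $a_{jj}, a_{kl}, b_{jj}, b_{kl}, c_{kl}$ that appear in $\varphi$ and in $\Phi$, which are the same by definition.

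It then remains to verify one identity per type of block. For $a_{jj}\,A_{\epsilon(j),n}(\Lambda_0)$ restricted to $V_j$ we recover $a_{jj}\,\alpha_{\epsilon(j),n}(\lambda_0)$ directly from \Cref{lem:resab}; likewise $b_{jj}\,B_{\epsilon(j),n}(\Lambda_1)$ restricts to $b_{jj}\,\beta_{\epsilon(j),n}(\lambda_1)$. For the off-diagonal $W_k\otimes W_l$ with $\epsilon(k)=\epsilon(l)=d$, the map $A_{d,n}$ (resp.\ $B_{d,n}$) is defined by the same bilinear form $b_{\cL_d(\Lambda_0)}$ (resp.\ $b_{\cL_d(\Lambda_1)}$) interpreted on the tensor product rather than on the symmetric square, so the proof of \Cref{lem:resab} applies verbatim and yields the restriction statement to $V_k\otimes V_l$. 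Finally, for the $c_{kl}\,C_{\epsilon(k),n}(\Lambda_1)$ block with $\epsilon(k)+1=\epsilon(l)$, \Cref{lem:resc} states exactly that restriction to $V_k\otimes V_l$ equals $c_{kl}\,\gamma_{\epsilon(k),n}(\lambda_1)$. Summing up gives $\varphi_0(\lambda_0)+\varphi_1(\lambda_1)=\varphi(\lambda)$.

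The only non-trivial point in carrying this out is the bookkeeping for the off-diagonal blocks of type $W_k\otimes W_l$ with $\epsilon(k)=\epsilon(l)$, where I would have to spell out that the bilinear form attached to such a block is induced from $b_{\cL_d}$ via the canonical identification of $W_k,W_l$ with $W_{n-d,d}$, so that the conclusion of \Cref{lem:resab} transports cleanly. Once this identification is made explicit, no further computation is needed and the proposition follows.
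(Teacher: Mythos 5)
Your proof is correct and is essentially the paper's argument made explicit: the paper disposes of this proposition in one line by invoking the construction of $\Phi$ together with \Cref{lem:resab} and \Cref{lem:resc}, and what you have written is precisely the unpacking of that sentence. The decomposition $\lambda=\lambda_0+\lambda_1$ along $V_n\oplus V_{n-1,1}$, the block-by-block comparison using the shared coefficients $a_{jj},a_{kl},b_{jj},b_{kl},c_{kl}$, and the verification that each building block $A_{d,n},B_{d,n},C_{d,n}$ restricts to $\alpha_{d,n},\beta_{d,n},\gamma_{d,n}$ is exactly the intended content. Your closing remark that the only point requiring care is the off-diagonal blocks $W_k\otimes W_l$ with $\epsilon(k)=\epsilon(l)$ is also apt; the paper implicitly handles this by defining $\alpha_{d,n},\beta_{d,n}$ and $A_{d,n},B_{d,n}$ as restrictions of the same bilinear form $b_{\cL_d(\cdot)}$ inside the full tensor square, so the argument of \Cref{lem:resab} transports without change, just as you say.
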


\begin{proof}
 This follows by construction from \Cref{lem:resab} and \Cref{lem:resc}.
\end{proof}

\begin{cor}\label{cor:easydir}
 Let $X\in\Sym_2(\R^n)$ and $\lambda\in\R^n$ be the vector of eigenvalues of $X$. If $\Phi(X)$ is positive semidefinite, then $\varphi(\lambda)$ is positive semidefinite.
\end{cor}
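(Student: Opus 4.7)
The plan is to exploit the $\Or(n)$-equivariance of $\Phi$ to reduce to the diagonal case, where \Cref{prop:resphi} directly applies.

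First, since $X \in \Sym_2(\R^n)$ is a real symmetric matrix, the spectral theorem provides an orthogonal matrix $U \in \Or(n)$ such that $U X U^{-1} = \Lambda$, where $\Lambda$ is the diagonal matrix with diagonal $\lambda$. The $\Or(n)$-linearity of $\Phi$ then yields $\Phi(\Lambda) = U \cdot \Phi(X)$, where the dot denotes the action of $\Or(n)$ on $\Sym_2(W)$ induced by the representation on $W$. Because this action is by conjugation by the orthogonal transformation of $W$ corresponding to $U$, it preserves the cone of positive semidefinite bilinear forms on $W$. Hence $\Phi(\Lambda)$ is positive semidefinite whenever $\Phi(X)$ is.

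Next, by \Cref{prop:resphi}, the restriction of the bilinear form $\Phi(\Lambda)$ on $W$ to the subspace $V \subset W$ coincides with $\varphi(\lambda)$. Since the restriction of a positive semidefinite bilinear form to any linear subspace remains positive semidefinite, we conclude that $\varphi(\lambda)$ is positive semidefinite, as required.

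The proof is essentially immediate once one has the commutative diagram of \Cref{prop:resphi} together with orthogonal invariance; no further obstacle appears. The nontrivial content has already been absorbed into the construction of $\Phi$ and into \Cref{lem:resab} and \Cref{lem:resc}, which compute the restrictions of $A_{d,n}$, $B_{d,n}$, $C_{d,n}$ to the $\fS_n$-invariant subspace $V \subset W$.
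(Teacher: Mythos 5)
Your proof is correct and follows exactly the same route as the paper: diagonalize $X$ by an orthogonal matrix, use the $\Or(n)$-linearity of $\Phi$ to transfer positive semidefiniteness from $\Phi(X)$ to $\Phi(\Lambda)$, and then apply \Cref{prop:resphi} together with the fact that restriction of a PSD form to a subspace remains PSD. The extra remarks about where the hard work was absorbed are accurate but not part of the argument.
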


\begin{proof}
 Let $\Lambda\in\Sym_2(\R^n)$ be the diagonal matrix with diagonal $\lambda$ and let $S\in\Or(n)$ be an orthogonal matrix such that $S^tX S=\Lambda$. Since $\Phi$ is $\Or(n)$-linear, $\Phi(X)$ being positive semidefinite implies that $\Phi(\Lambda)$ is positive semidefinite. But then its restriction to $V$, which is $\varphi(\lambda)$ by \Cref{prop:resphi}, is also positive semidefinite.
\end{proof}

In order to show the other direction, we decompose $W$ into a direct sum of linear subspaces that are pairwise orthogonal with respect to the bilinear form $\Phi(\Lambda)$ for every diagonal matrix $\Lambda\in\Sym_2(\R^n)$. As an $\Or(n)$-module, $W$ equals $\oplus_{j=1}^{m} W_{j}$ where $W_j$ is an $\Or(n)$-module isomorphic to $W_{n-\epsilon(j),\epsilon(j)}$. Fix disjoint $J_1,J_2\subset[n]$ such that $|J_1|=|J_2|$ and $J_1\leq J_2$. We have defined the subspace $$W_{n-\epsilon(j),\epsilon(j)}(J_1,J_2)\subset W_{n-\epsilon(j),\epsilon(j)}\cong W_j\subset W.$$ The direct sum of these subspaces of $W$ for all $j$ is denoted by $W(J_1,J_2)$.

\begin{lem}\label{lem:wareorth}
 The subspaces $W(J_1,J_2)$ are pairwise orthogonal with respect to the bilinear form $\Phi(\Lambda)$ on $W$ for every diagonal matrix $\Lambda\in\Sym_2(\R^n)$.
\end{lem}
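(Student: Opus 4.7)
The plan is to reduce the orthogonality claim to the already established orthogonalities in Corollaries \ref{cor:orthbzglab} and \ref{cor:orthbzglc} by decomposing both $\Lambda$ and $\Phi$ into pieces we understand.

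First I would split $\Lambda = \Lambda_0 + \Lambda_1$ where $\Lambda_0 \in W_n$ is the scalar-trace part and $\Lambda_1 \in W_{n-1,1}$ is the traceless part. Both summands are diagonal, and by $\Or(n)$-linearity $\Phi(\Lambda) = \Phi_0(\Lambda_0) + \Phi_1(\Lambda_1)$. In particular, the diagonal of $\Lambda_1$ lies in $V_{n-1,1} \subset \R^n$, which is precisely the hypothesis needed to apply Corollary \ref{cor:orthbzglc}.

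Next I would use the direct sum decomposition $W = \bigoplus_j W_j$ and the block structure of $\Phi$. Given $u \in W(J_1,J_2)$ and $v \in W(J_1',J_2')$ with $(J_1,J_2) \neq (J_1',J_2')$, write $u = \sum_j u_j$ and $v = \sum_j v_j$ with $u_j \in W_j(J_1,J_2)$ and $v_j \in W_j(J_1',J_2')$. Then $\Phi(\Lambda)(u,v)$ expands into contributions from the diagonal blocks $\Sym_2(W_j)$ and the off-diagonal blocks $W_k \otimes W_l$ ($k < l$), and it suffices to verify each contribution vanishes. On the $\Sym_2(W_j)$ block the form equals $a_{jj}A_{\epsilon(j),n}(\Lambda_0) + b_{jj}B_{\epsilon(j),n}(\Lambda_1)$, which by construction is the restriction of $b_{\cL_{\epsilon(j)}(\Lambda_0)} + b_{\cL_{\epsilon(j)}(\Lambda_1)}$ to $W_{n-\epsilon(j),\epsilon(j)}$; since $\Lambda_0$ and $\Lambda_1$ are diagonal, Corollary \ref{cor:orthbzglab} gives the required vanishing.

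The remaining cases come from the off-diagonal blocks and split by the relation between $\epsilon(k)$ and $\epsilon(l)$. If $\epsilon(k) = \epsilon(l)$, the block form is $a_{kl} A_{\epsilon(k),n}(\Lambda_0) + b_{kl}B_{\epsilon(k),n}(\Lambda_1)$ and Corollary \ref{cor:orthbzglab} again applies verbatim. If $\epsilon(l) = \epsilon(k)+1$, the block form is $c_{kl}C_{\epsilon(k),n}(\Lambda_1)$, and since the diagonal of $\Lambda_1$ lies in $V_{n-1,1}$, Corollary \ref{cor:orthbzglc} supplies exactly the needed orthogonality of $W_{n-\epsilon(k),\epsilon(k)}(J_1,J_2)$ against $W_{n-\epsilon(l),\epsilon(l)}(J_1',J_2')$. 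If $|\epsilon(k)-\epsilon(l)| \geq 2$, no term in the construction of $\Phi$ couples $W_k$ and $W_l$, so the block is identically zero. The main bookkeeping point, rather than a genuine obstacle, is that the maps $A_{d,n}$, $B_{d,n}$, $C_{d,n}$ which assemble $\Phi$ were defined precisely so that their restrictions to the $W_{n-d,d}(J_1,J_2)$-pieces match the orthogonality statements proved earlier; once the trace/traceless splitting of $\Lambda$ is invoked, the lemma follows block-by-block.
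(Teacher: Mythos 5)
Your proof is correct and follows the same approach as the paper, which simply observes that $\Phi(\Lambda)$ is a sum of bilinear forms, each of which makes the $W(J_1,J_2)$ pairwise orthogonal by Corollaries \ref{cor:orthbzglab} and \ref{cor:orthbzglc}; you merely spell out the block decomposition of $\Phi$ and the trace/traceless splitting of $\Lambda$ that makes this precise. (One small slip: on the $\Sym_2(W_j)$ block the form is the restriction of $b_{\cL_{\epsilon(j)}(a_{jj}\Lambda_0 + b_{jj}\Lambda_1)}$, not of $b_{\cL_{\epsilon(j)}(\Lambda_0)} + b_{\cL_{\epsilon(j)}(\Lambda_1)}$, but since $a_{jj}\Lambda_0 + b_{jj}\Lambda_1$ is still diagonal, Corollary \ref{cor:orthbzglab} applies as you intend.)
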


\begin{proof}
 The bilinear form $\Phi(\Lambda)$ is the sum of bilinear forms with respect to which the subspaces $W(J_1,J_2)$ are pairwise orthogonal by \Cref{cor:orthbzglab} and \Cref{cor:orthbzglc}.
\end{proof}

\begin{lem}\label{lem:warepsd}
 Let $\Lambda\in\Sym_2(\R^n)$ be the diagonal matrix with diagonal $\lambda\in\R^n$.
 If $\varphi(\lambda)$ is positive semidefinite, then the restriction of $\Phi(\Lambda)$ to $W(J_1,J_2)$ is positive semidefinite as well.
\end{lem}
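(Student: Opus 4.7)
The plan is to pull back the bilinear form $\varphi(\lambda)$ on $V$ along an explicit linear map $\rho:W(J_1,J_2)\to V$ and to show that, up to a positive scalar, this pullback equals the restriction of $\Phi(\Lambda)$ to $W(J_1,J_2)$. Since any pullback of a positive semidefinite form is positive semidefinite, the conclusion then follows immediately from the assumption that $\varphi(\lambda)$ is positive semidefinite.

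First I would set $r=|J_1|=|J_2|$ and decompose $\lambda=\lambda_0+\lambda_1$ with $\lambda_0\in V_n$ (the multiples of the all-ones vector) and $\lambda_1\in V_{n-1,1}$ (the sum-zero hyperplane). The associated decomposition $\Lambda=\Lambda_0+\Lambda_1$ has $\Lambda_0$ a scalar matrix in $W_n$ and $\Lambda_1$ a traceless diagonal matrix in $W_{n-1,1}$. Next I would define $\rho:W(J_1,J_2)\to V$ componentwise by $\rho|_{W_{n-\epsilon(j),\epsilon(j)}(J_1,J_2)}=\rho_{\epsilon(j),n}$; the preceding lemma guarantees that $\rho$ indeed lands in $V_j\subset V$ on each summand and hence globally in $V$.

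The heart of the argument is the identity
\[ \Phi(\Lambda)(u,v)\;=\;2^{-r}\,\varphi(\lambda)(\rho(u),\rho(v))\qquad\text{for all }u,v\in W(J_1,J_2). \]
To establish it, I would use bilinearity to reduce to pairs $u\in W_p(J_1,J_2)$, $v\in W_q(J_1,J_2)$ for $1\leq p\leq q\leq m$. Since $\Phi$ and $\varphi$ were constructed from the same list of building blocks $A_{d,n}/\alpha_{d,n}$, $B_{d,n}/\beta_{d,n}$, $C_{d,n}/\gamma_{d,n}$ with identical scalar coefficients $a_{pq},b_{pq},c_{pq}$, the verification comes down to one building block at a time, with $\Lambda$ split as $\Lambda_0+\Lambda_1$ to separate the $A/B$-type and $C$-type contributions. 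For the diagonal pieces and for the off-diagonal pieces of type $A$ or $B$ (those with $\epsilon(p)=\epsilon(q)$), the relevant pairing is a scalar multiple of $b_{\cL_d(\Lambda_0)}$ or $b_{\cL_d(\Lambda_1)}$, and \Cref{lem:min1}, applied in its polarized form, produces exactly the factor $2^{-r}$. For the off-diagonal pieces of type $C$ (those with $\epsilon(p)+1=\epsilon(q)$), the same factor comes directly out of \Cref{lem:min2}.

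The main obstacle, as I see it, is the bookkeeping in the off-diagonal case, where two distinct isomorphic copies $W_p,W_q$ of $W_{n-d,d}$ inside $W$ must be compared through $\rho$ with the corresponding isomorphic copies $V_p,V_q$ of $V_{n-d,d}$ inside $V$. Here one has to verify that $\rho_{d,n}$ depends only on the intrinsic $\Or(n)$- and $\fS_n$-structure of $W_{n-d,d}$ and $V_{n-d,d}$, so that the Schur-type identifications through which $\Phi$ was produced from $\varphi$ intertwine $\rho$ with the matching coefficients $a_{pq},b_{pq},c_{pq}$. Once this has been checked, the remainder is routine polarization of \Cref{lem:min1}, and taken together with \Cref{lem:wareorth} the preceding decomposition of $W$ into pairwise orthogonal summands $W(J_1,J_2)$ yields the desired positive semidefiniteness.
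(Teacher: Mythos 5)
Your proposal is correct and follows essentially the same route as the paper's proof: combining \Cref{lem:min1} (polarized) and \Cref{lem:min2} to identify the restriction of $\Phi(\Lambda)$ to $W(J_1,J_2)$ with the positive multiple $2^{-r}$ of the pullback of $\varphi(\lambda)$ under $\rho$, then invoking \Cref{prop:resphi}. The ``Schur-type'' bookkeeping you flag as the main obstacle is in fact automatic, since $\rho_{d,n}$, the building blocks $A_{d,n},B_{d,n},C_{d,n}$, and their restrictions to $V_{n-d,d}$ are all given by concrete formulas on the fixed model spaces $W_{n-d,d}$ and $V_{n-d,d}$, so the scalar coefficients $a_{pq},b_{pq},c_{pq}$ factor out identically on both sides of your identity.
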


\begin{proof}
 The combination of \Cref{lem:min1} and \Cref{lem:min2} shows that the restriction of $\Phi(\Lambda)$ to $W(J_1,J_2)$ is a positive scalar multiple of the restriction of $\Phi(\Lambda)$ to a certain subspace of $V$. Since the restriction of $\Phi(\Lambda)$ to $V$ is $\varphi(\lambda)$ by \Cref{prop:resphi}, this implies the claim.
\end{proof}

\begin{thm}\label{thm:psdiff}
 Let $X\in\Sym_2(\R^n)$ and $\lambda\in\R^n$ be the vector of eigenvalues of $X$. Then $\Phi(X)$ is positive semidefinite if and only if $\varphi(\lambda)$ is positive semidefinite.
\end{thm}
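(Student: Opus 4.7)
The forward direction is already handled by \Cref{cor:easydir}, so the task reduces to showing that if $\varphi(\lambda)$ is positive semidefinite then so is $\Phi(X)$. The plan is to first reduce to the case where $X$ is diagonal using the $\Or(n)$-equivariance of $\Phi$, and then use the subspace decomposition of $W$ together with \Cref{lem:wareorth} and \Cref{lem:warepsd} to conclude.

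For the reduction step, I would diagonalize $X=S\Lambda S^t$ for some $S\in\Or(n)$, where $\Lambda$ is the diagonal matrix with diagonal $\lambda$. Since $\Phi$ is $\Or(n)$-linear, $\Phi(X)$ and $\Phi(\Lambda)$ differ by the action of $S$ on $\Sym_2(W)$ induced by the $\Or(n)$-action on $W$. Because $S$ acts on $W$ as an orthogonal transformation, its induced action on $\Sym_2(W)$ preserves positive semidefiniteness. Therefore it suffices to prove that $\Phi(\Lambda)$ is positive semidefinite whenever $\varphi(\lambda)$ is.

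For the diagonal case, the key observation is that $W$ decomposes as the direct sum of the subspaces $W(J_1,J_2)$ over all valid pairs $(J_1,J_2)$. This follows from the paper's earlier result that each $W_{n-d,d}$ is the direct sum of the $W_{n-d,d}(J_1,J_2)$, summed across the irreducible summands $W_j$ constituting $W$. By \Cref{lem:wareorth}, these subspaces are pairwise orthogonal with respect to the bilinear form $\Phi(\Lambda)$, so $\Phi(\Lambda)$ is block-diagonal in this decomposition. By \Cref{lem:warepsd}, the restriction of $\Phi(\Lambda)$ to each block $W(J_1,J_2)$ is positive semidefinite, assuming $\varphi(\lambda)$ is. Hence $\Phi(\Lambda)$ itself is positive semidefinite, completing the proof.

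There is no real obstacle here, since essentially all the substantive work has been carried out in the preceding lemmas. The only care needed is to verify that the decomposition of $W$ into the $W(J_1,J_2)$ exhausts $W$ and is respected by $\Phi(\Lambda)$ in the sense of pairwise orthogonality; both of these are immediate from the already-established results. The main conceptual point of the argument is that, even though the individual restrictions of $\Phi(\Lambda)$ to $W(J_1,J_2)$ are detected by positive semidefiniteness of $\varphi(\lambda)$ on suitable subspaces of $V$ (via \Cref{lem:min1} and \Cref{lem:min2}), the pairwise orthogonality lets these local checks assemble into global positive semidefiniteness on $W$.
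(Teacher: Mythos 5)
Your proof is correct and follows essentially the same route as the paper: reduce to the diagonal case via $\Or(n)$-equivariance, then combine \Cref{lem:wareorth} (block-orthogonality of the $W(J_1,J_2)$) with \Cref{lem:warepsd} (positive semidefiniteness on each block). The only difference is that you spell out the block-diagonal structure a bit more explicitly than the paper does.
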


\begin{proof}
 One direction was shown in \Cref{cor:easydir}. For the other direction let $\Lambda\in\Sym_2(\R^n)$ be the diagonal matrix with diagonal $\lambda$ and let $S\in\Or(n)$ be an orthogonal matrix such that $S^tX S=\Lambda$. Since $\Phi$ is $\Or(n)$-linear, $\Phi(X)$ being positive semidefinite is equivalent to $\Phi(\Lambda)$ being positive semidefinite. The latter is the case if $\varphi(\lambda)$ is positive semidefinite by \Cref{lem:wareorth} and \Cref{lem:warepsd}.
\end{proof}

\begin{ex}\label{ex:firstderi}
 Let $V=V_{n-1,1}$, i.e., $m=1$ and $\epsilon(1)=1$, and let $\varphi:\R^n\to\Sym_2(V)$ be the map obtained by composing the diagonal map $\R^n\to\Sym_2(\R^n)$ with the restriction to $V_{n-1,1}$, i.e., we have $a_{11}=b_{11}=1$. It was shown in \cite{pol1} that $\varphi(a)$ is positive semidefinite if and only if $a$ is in the hyperbolicity cone of $\sigma_{n-1}([n])$. The associated $\Or(n)$-module is then $W=W_{n-1,1}$, the space of traceless symmetric matrices. The map $\Phi: \Sym_2(\R^2)\to\Sym_2(W)$ is obtained by composing the diagonal map $\Sym_2(\R^n)\to\Sym_2(\Sym_2(\R^n))$ with the restriction to $W_{n-1,1}$. This is Saunderson's spectrahedral representation of the first derivative relaxation of the positive semidefinite cone \cite{firstderi}.
\demo \end{ex}

\begin{ex}
 We want to carry out one completely explicit example. In order to avoid very large matrices, we consider $V=\R^3=V_3\oplus V_{2,1}$. Similarly to \Cref{ex:ma24complete}, for every $\fS_3$-linear $\varphi:\R^3\to\Sym_2(\R^3)$ the matrix $\varphi(a)$ equals to
 $$\bordermatrix{
	  &V_3 &  V_{2,1}  \cr
V_3 & \lambda_1 M_1(a)  & \lambda_2 M_2(a)   \cr
V_{2,1} & \lambda_2 M_2(a)^t &  \lambda_3M_3(a)+\lambda_4M_4(a)   \cr
}
 $$for some $\lambda_1,\ldots,\lambda_4\in\R$ where we define
 $$M_1(a)=\left(a_1+a_2+a_3\right), \, M_2(a)=(a_1-a_2,a_1-a_3),$$
 $$M_3(a)=(a_1+a_2+a_3)\cdot\begin{pmatrix}
                             2&1\\ 1&2
                            \end{pmatrix},\, M_4(a)=\begin{pmatrix}
                             a_1+a_2&a_1\\ a_1&a_1+a_3
                            \end{pmatrix}.$$
 For example when $\lambda_1=\cdots=\lambda_4=1$, we obtain $$\varphi(a)=\begin{pmatrix}
                                                                          a_1+a_2+a_3&  a_1-a_2&  a_1-a_3 \\a_1-a_2&  3 a_1+3 a_2+2 a_3&  2 a_1+a_2+a_3 \\a_1-a_3&  2 a_1+a_2+a_3&  3 a_1+2 a_2+3 a_3
                                                                         \end{pmatrix}.$$
 The matrix $\varphi(a)$ is positive semidefinite if and only if $a$ is in the hyperbolicity cone of the irreducible ternary cubic polynomial $$h=\sigma_{1,3}^3+2 \sigma_{1,3} \sigma_{2,3}+3 \sigma_{3,3}.$$ In order to compute the corresponding $\Or(3)$-linear map we let $W=W_3\oplus W_{2,1}$ where $W_3$ is the trivial representation and $W_{2,1}$ is the space of symmetric traceless $3\times3$ matrices. As a basis of $W_{2,1}$ we choose $$E_{11}-E_{22}, E_{11}-E_{33}, E_{12}+E_{21}, E_{13}+E_{31}, E_{23}+E_{32}$$ where $E_{ij}$ is the matrix which whose $(i,j)$th entry is $1$ and all others are zero. Letting $\Phi:\Sym_2(\R^3)\to\Sym_2(\Sym_2(\R^3))$ the $\Or(3)$-linear map associated to $\varphi$, we have that $\Phi(A)$ equals to the matrix
 $$\bordermatrix{
	  &W_3 &  W_{2,1}  \cr
W_3 & \lambda_1 N_1(A)  & \lambda_2 N_2(A)   \cr
W_{2,1} & \lambda_2 N_2(A)^t &  \lambda_3N_3(A)+\lambda_4N_4(A)   \cr
}
 $$for the same $\lambda_1,\ldots,\lambda_4\in\R$, where we define
 $$N_1(A)=\left(\tr(A)\right), \, N_2(A)=(a_{11}-a_{22},a_{11}-a_{33},2a_{12},2a_{13},2a_{23}),$$
 $$N_3(A)=\tr(A)\cdot\begin{pmatrix}
                             2&1&0&0&0\\ 1&2&0&0&0\\0&0&2&0&0\\0&0&0&2&0\\ 0&0&0&0&2
                            \end{pmatrix},$$ $$\textrm{ and }\,\,\,\, N_4(A)=\begin{pmatrix}
                             a_{11}+a_{22}& a_{11}& 0& a_{13}& -a_{23} \\ a_{11}& a_{11}+a_{33}& a_{12}& 0& -a_{23} \\ 0& a_{12}& a_{11}+a_{22}& a_{23}& a_{13} \\ a_{13}& 0& a_{23}& a_{11}+a_{33}& a_{12} \\ -a_{23}& -a_{23}& a_{13}& a_{12}& a_{22}+a_{33}
                            \end{pmatrix}.$$
In particular when $\lambda_1=\cdots=\lambda_4=1$, we obtain for $\Phi(A)$ the matrix
$$\left(\begin{smallmatrix}
   \tr(A)& a_{11}-a_{22}& a_{11}-a_{33}& 2 a_{12}& 2 a_{13}& 2 a_{23}\\ a_{11}-a_{22}& 3\tr(A) - a_{33}&  a_{11}+\tr(A)& 0& a_{13}& -a_{23}\\ a_{11}-a_{33}& a_{11}+\tr(A)& 3\tr(A) - a_{22}& a_{12}& 0& -a_{23}\\ 2 a_{12}& 0& a_{12}& 3\tr(A) -a_{33}& a_{23}& a_{13}\\ 2 a_{13}& a_{13}& 0& a_{23}& 3\tr(A) -a_{22}& a_{12}\\ 2 a_{23}& -a_{23}& -a_{23}& a_{13}& a_{12}& 3\tr(A) -a_{11}
  \end{smallmatrix}\right).$$The matrix $\Phi(A)$ is positive semidefinite if and only if $A$ is in the hyperbolicity cone of the irreducible cubic polynomial $$H=P_{1}^3+2 P_{1} P_{2}+3 P_{3}$$where $P_i(A)=\sigma_{i,3}(\lambda(A))$. Its determinant however is the reducible sextic $$3\cdot(P_{1}^3+2 P_{1} P_{2}+3 P_{3})\cdot(18P_1^3+3P_1P_2-P_3) .\demo $$
\end{ex}

\begin{figure}[h]
  \begin{center}
    \includegraphics[width=4cm]{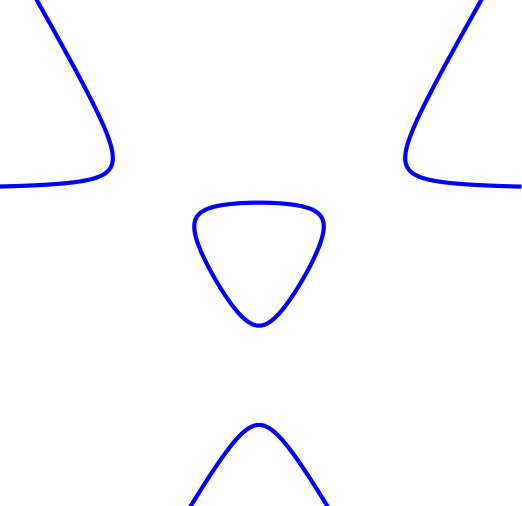} \quad
    \includegraphics[width=4cm]{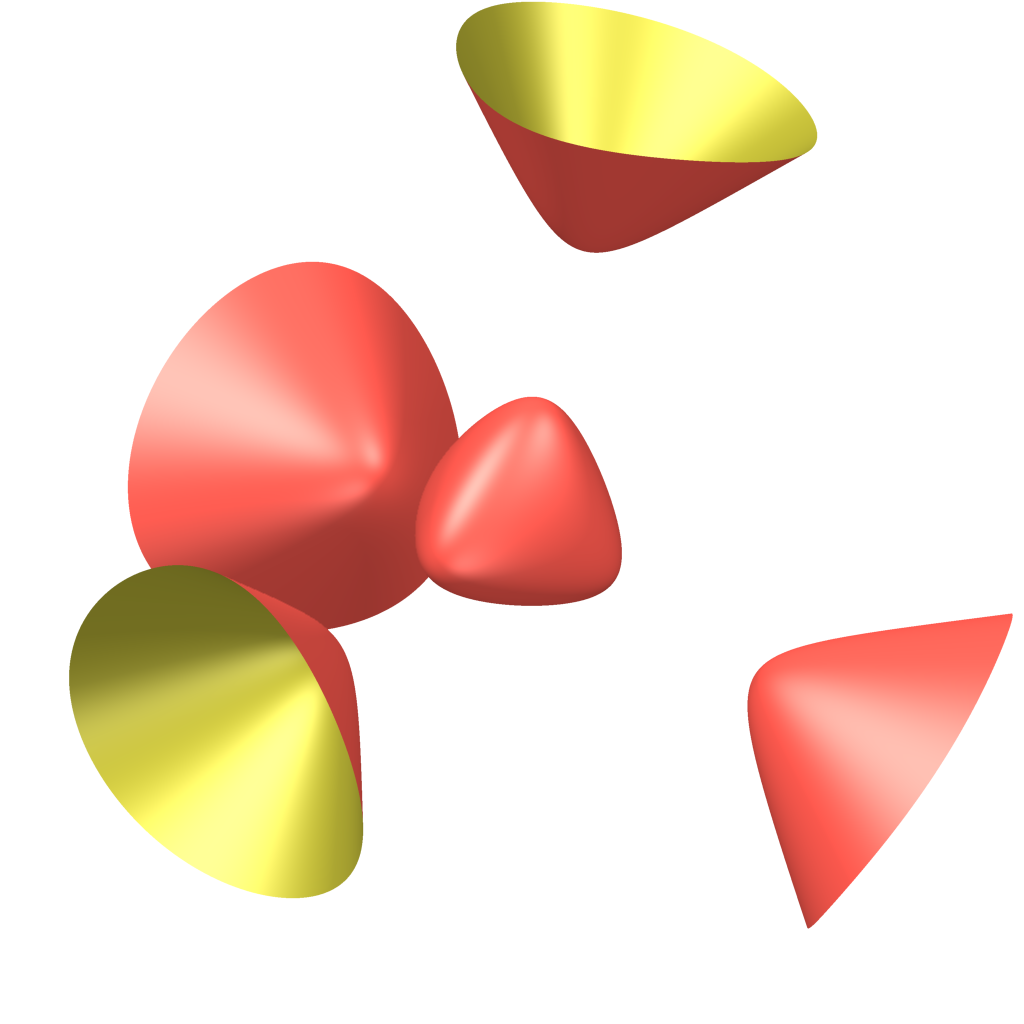}    
  \end{center}
  \caption{The zero set of the hyperbolic polynomial $h$ restricted to the affine hyperplane $x_1+x_2+x_3=1$ (left) and the zero set of the hyperbolic polynomial $H$ restricted to the space of symmetric $3\times3$ matrices with diagonal $(1,1,1)$ (right).}
  \label{fig:cubic2}
\end{figure}

\begin{rem}\label{rem:minors}
 Denote by $\R[\Sym_2(\R^n)]_d$ the space of homogeneous polynomial functions of degree $d$ on $\Sym_2(\R^n)$ and let $\Min_{d,n}$ be the subspace spanned by the $d\times d$ minors. Clearly, $\Min_{d,n}$ is an $\Or(n)$-invariant subspace. We consider a symmetric matrix $X\in\Sym_2(\R^n)$ as a self-adjoint endomorphism $\R^n\to\R^n$ and denote by $\wedge^dX$ the induced self-adjoint endomorphism of $\wedge^d\R^n$. To every self-adjoint endomorphism $A\in\Sym_2(\wedge^d\R^n)$ we associate the map $$f_A: \Sym_2(\R^n)\to\R,\, X\mapsto\tr(A\cdot\wedge^d X).$$ It is direct to see that $f_A\in\Min_{d,n}$ and that the map $$\Sym_2(\wedge^d\R^n)\to\Min_{d,n},\,A\mapsto f_A$$ is surjective and $\Or(n)$-invariant. The kernel of this map is orthogonal to $\Ma_{d,n}\subset\Sym_2(\wedge^d\R^n)$. If one cares about the size of spectrahedral representations, then one can further compress it to the subspace $\tilde{W}$ of $W$ that is obtained by replacing each summand $W_j\cong W_{n-\epsilon(j),\epsilon(j)}$ by its intersection $\tilde{W}_{n-\epsilon(j),\epsilon(j)}$ with the orthogonal complement of the kernel of the above map for $d=\epsilon(j)$. Since $\tilde{W}$ contains $V$, the resulting linear matrix inequality still describes the same spectrahedral cone.
\end{rem}

The content of the following remark is not needed for the rest of this article but it might give a more complete picture from the representation theoretic point of view. We therefore omit the proofs.

\begin{rem}
 We can describe the $\Or(n)$-module $\tilde{W}_{n-d,d}$ for $0\leq 2d\leq n$ as the kernel of the map $\De_I: \Min_{d,n}\to \Min_{d-1,n}$ where $\De_I$ denotes the derivative in direction of the identity matrix. The orthogonal projection from $\tilde{W}_{n-d,d}$ to the subspace $V_{n-d,d}$ is given by restricting a polynomial $p\in\tilde{W}_{n-d,d}$ to the space of diagonal matrices. Using the notation of \cite[\S 10.2.1]{goodwallach} the $\Or(n)$-module $\tilde{W}_{n-d,d}$ is isomorphic to the irreducible representation $E^{(d,d)'}$ where $(d,d)'$ is the partition $(2,\ldots,2)$ of $2d$. This suggests that the true analogon to the irreducible $\fS_n$-module $V_{n-d,d}$ should be the $\Or(n)$-module $\tilde{W}_{n-d,d}\cong E^{(d,d)'}$ rather than ${W}_{n-d,d}$. However, we think that the proof of \Cref{thm:psdiff} is easier to carry out by using ${W}_{n-d,d}$.
\end{rem}

\section{Derivative cones}\label{sec:deris}
Let us now recall the spectrahedral representation of the hyperbolicity cone of the elementary symmetric polynomial $\sigma_{d,n}$ that was constructed in \cite{BranEle}. From this we will construct a spectrahedral representation that satifies the conditions of the main theorem (\Cref{thm:main}). Note that an alternative spectrahedral representation of these hyperbolicity cones was constructed in \cite[Thm.~2.19]{matching}. We will compare the sizes of different such representations in \Cref{rem:sizes}.

Let $B(x)$ be a symmetric matrix whose rows and columns are indexed by words $w_1\ldots w_l$ with letters $w_i\in\{1,\ldots,n\}$ of length $0\leq l\leq d-1$ such that $w_i\neq w_j$ for $i\neq j$. We let the diagonal entry of $B(x)$ corresponding to the word $w_1\ldots w_l$ be $$(d-1-l)!\cdot ((d-l)\cdot x_{w_l}+\sum_{j\in[n]\setminus\{w_1,\ldots,w_l\}}x_j)$$and the entries $(w_1\ldots w_{l-1},w_1\ldots w_l)$ resp. $(w_1\ldots w_{l},w_1\ldots w_{l-1})$ for $1\leq l\leq d-1$ by $-(d-l)!\cdot x_{w_l}$. We set all other entries of $B(x)$ to zero. Then we have:

\begin{thm}[\cite{BranEle}]\label{thm:branden}
 A point $a\in\R^n$ is in the hyperbolicity cone of $\sigma_{d,n}$ if and only if $B(a)$ is positive semidefinite. More precisely, the determinant $\det B(x)$ is the product of $\sigma_{d,n}$ with a (nonzero) hyperbolic polynomial whose hyperbolicity cone contains the hyperbolicity cone of $\sigma_{d,n}$.
\end{thm}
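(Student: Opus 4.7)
I would proceed by induction on $d$, using a Schur-complement analysis that exploits the ``tree'' structure of $B(x)$: the nonzero off-diagonal entries only connect a word $w_1\ldots w_{l-1}$ to its extensions $w_1\ldots w_{l-1}w$, so after permuting rows and columns so that indices are grouped by their first letter, $B(x)$ becomes an arrowhead matrix. The base case $d=1$ is immediate: the only index is the empty word and $B(x)$ reduces to the $1\times 1$ matrix $\sigma_{1,n}(x)$.

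For the inductive step, the $(0,0)$ block of $B(x)$ (the empty word) is $(d-1)!\,\sigma_{1,n}(x)$, the only other entries in the first row/column are $-(d-1)!\,x_{w_1}$ for words of length one, and the remaining principal submatrix $B'(x)$ splits as a block-diagonal sum indexed by the first letter $w_1\in[n]$. Each such block is, up to a diagonal perturbation proportional to $x_{w_1}$ at the root of the subtree (coming from the extra ``$(d-l)x_{w_l}$'' in the diagonal formula), a copy of the matrix $B_{d-1}$ built on the reduced variable set $\{x_j:j\neq w_1\}$. Apply the Schur-complement identity
\[
 \det B(x)\;=\;(d-1)!\,\sigma_{1,n}(x)\cdot\det\!\Bigl(B'(x)-\tfrac{1}{(d-1)!\,\sigma_{1,n}(x)}\,vv^{\top}\Bigr),
\]
where $v$ is the column with entries $-(d-1)!\,x_{w_1}$. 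The rank-one correction spreads across the independent blocks and can be handled by the matrix-determinant lemma. The combinatorial identity
\[
 d\cdot\sigma_{d,n}(x)\;=\;\sum_{w_1=1}^{n}x_{w_1}\,\sigma_{d-1,n-1}(x_{\widehat{w_1}})
\]
together with the inductive factorization $\det B_{d-1}=\sigma_{d-1,n-1}\cdot q_{d-1}$ on each subtree should then give the factorization $\det B(x)=\sigma_{d,n}(x)\cdot q(x)$ with $q$ of the claimed form.

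For the positive semidefiniteness statement, use that on the hyperbolicity cone of $\sigma_{d,n}$ one has $\sigma_{1,n}(x)\geq 0$ (strictly so in the interior). The Schur-complement identity then reduces positive semidefiniteness of $B(x)$ to positive semidefiniteness of each of the corrected diagonal blocks, which by induction hold on the hyperbolicity cone of $\sigma_{d-1,n-1}$ in the appropriate restricted variables. The required containment of hyperbolicity cones follows from the Rolle-type inclusion $\Co(\sigma_{d,n},e)\subset\Co(\partial_{x_i}\sigma_{d,n},e)=\Co(\sigma_{d-1,n-1}(x_{\widehat{\imath}}),e)$. The main obstacle is the bookkeeping in the Schur-complement step: making the factorial factors and the rank-one correction cancel cleanly against the extra diagonal perturbation in each subtree block, and tracking $q(x)$ carefully enough to verify both that it is hyperbolic with respect to $e$ and that its hyperbolicity cone contains that of $\sigma_{d,n}$. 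This combinatorial identity is the crux of the argument; the rest is standard induction.
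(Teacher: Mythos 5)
The paper does not prove this statement --- \Cref{thm:branden} is quoted directly from \cite{BranEle} --- so there is no in-paper argument to compare yours against. Your structural observation is correct: the index set is a rooted tree (each word is linked only to its immediate prefix and its one-letter extensions), so after deleting the empty-word row and column the remaining matrix $B'(x)$ is block diagonal, indexed by the first letter $w_1$, with each block equal to $B_{d-1}$ on the variables $[n]\setminus\{w_1\}$ plus an extra $(d-1)!\,x_{w_1}$ on the subtree-root diagonal entry; the base case $d=1$ and the identity $d\,\sigma_{d,n}=\sum_i x_i\,\sigma_{d-1}([n]\setminus\{i\})$ are also correct. However, the Schur-complement step has a genuine gap, not merely a bookkeeping one. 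The rank-one correction $vv^{\top}/\bigl((d-1)!\,\sigma_{1,n}(x)\bigr)$ has nonzero $(w_1,w_1')$-entries $(d-1)!\,x_{w_1}x_{w_1'}/\sigma_{1,n}(x)$ for $w_1\neq w_1'$, so it couples \emph{all} the subtree roots to one another and the Schur complement is \emph{not} block diagonal. Invoking the matrix-determinant lemma does not restore a block factorization: it yields $\det B=\bigl((d-1)!\,\sigma_{1,n}-v^{\top}(B')^{-1}v\bigr)\cdot\prod_{w_1}\det B'_{w_1}$, and $v^{\top}(B')^{-1}v$ involves the root cofactors of the blocks $B'_{w_1}$, quantities about which your inductive hypothesis ($\det B_{d-1}=\sigma_{d-1,n-1}\cdot q_{d-1}$) says nothing. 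Closing the recursion would require a substantially stronger hypothesis that also tracks those cofactors and their own factorizations.

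The positive-semidefiniteness step also runs in the wrong direction. Since $vv^{\top}$ is positive semidefinite, the Schur complement $B'-vv^{\top}/\bigl((d-1)!\,\sigma_{1,n}\bigr)$ is \emph{dominated by} $B'$; hence showing each block of $B'(a)$ is PSD does not imply the Schur complement is PSD (it is the reverse implication that comes for free), and it is the Schur complement whose PSD-ness is equivalent to that of $B(a)$. A route more in line with \Cref{sec:deris} of this paper is: verify $B(e)\succ0$ directly, so that $\det B$ is hyperbolic with respect to $e$ and $\{a:B(a)\succeq0\}=\Co(\det B,e)$; use the kernel vector $m(x)$ of \Cref{lem:brandenkern} together with irreducibility of $\sigma_{d,n}$ to conclude $\sigma_{d,n}\mid\det B$; then argue, as in the proof of \Cref{thm:kleinebranden}, that this forces $\{a:B(a)\succeq0\}=\Co(\sigma_{d,n},e)$, from which the hyperbolicity of $q=\det B/\sigma_{d,n}$ and the cone containment $\Co(\sigma_{d,n},e)\subseteq\Co(q,e)$ follow. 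Alternatively, consult Br\"and\'en's original argument in \cite{BranEle}.
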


\begin{ex}
 For $d=2$ and $n=4$ the matrix $B(x)$ is given by $$\bordermatrix{
	  &\emptyset &  1 & 2 &3 \cr
\emptyset & x_1+x_2+x_3  & -x_1 & -x_2 & -x_3  \cr
1 & -x_1 & x_1+x_2+x_3 & 0 & 0  \cr
2 &  -x_2 &  0 & x_1+x_2+x_3 & 0 \cr
3 &  -x_3 &  0 & 0 & x_1+x_2+x_3  \cr
}.$$
\demo \end{ex}

Since the determinant of $B(x)$ is divisible by $\sigma_{d,n}$, it follows that $B(x)$ has a nontrivial kernel at every point where $\sigma_{d,n}$ vanishes. In order to explicitly describe such a kernel vector, recall that for every subset $S\subset [n]$ we denote by $\sigma_i(S)$ the elementary symmetric polynomial of degree $i$ in the variables indexed by elements from $S$.

\begin{lem}\label{lem:brandenkern}
 Let $m(x)$ be a vector whose entries are indexed by words $w_1\ldots w_l$ with letters $w_i\in\{1,\ldots,n\}$ of length $0\leq l\leq d-1$ such that $w_i\neq w_j$ for $i\neq j$. If we let the entry of $m(x)$ corresponding to the word $w_1\ldots w_l$ equal to $$\prod_{i=1}^l x_{w_i} \cdot \sigma_{d-1-l}([n]\setminus\{w_1,\ldots,w_l\}),$$ then $B(x)\cdot m(x)=\delta_{\emptyset} \cdot d!\cdot \sigma_d([n])$ where $\delta_{\emptyset}$ is the unit vector corresponding to the empty word.
\end{lem}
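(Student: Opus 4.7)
The plan is to verify the identity row by row. Fix a word $v = v_1\ldots v_k$ of length $0 \le k \le d-1$, and observe that the $v$-th row of $B(x)$ has at most three kinds of nonzero entries: the diagonal $B_{v,v}$; the entry $B_{v,v_1\ldots v_{k-1}} = -(d-k)!\,x_{v_k}$ in the column indexed by the proper prefix of $v$ (present only if $k \ge 1$); and entries $B_{v,v_1\ldots v_k j} = -(d-k-1)!\,x_j$ for each $j \in [n]\setminus\{v_1,\ldots,v_k\}$ (present only if $k \le d-2$). After substituting the formula for $m(x)$ into the resulting sum, one can factor $\prod_{i=1}^k x_{v_i}$ out of $(B(x) m(x))_v$ (using $x_{v_k}\prod_{i=1}^{k-1} x_{v_i} = \prod_{i=1}^k x_{v_i}$ in the prefix term).

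Writing $T = \{v_1,\ldots,v_k\}$ and $T^c = [n] \setminus T$, the remaining expression is a linear combination of $\sigma_{d-1-k}(T^c)$, $\sigma_{d-k}(T^c\cup\{v_k\})$, and the sum $\sum_{j\in T^c} x_j^2\,\sigma_{d-2-k}(T^c\setminus\{j\})$. Two elementary identities,
\[
 \sigma_r(S \cup \{y\}) = \sigma_r(S) + y\,\sigma_{r-1}(S), \qquad \sum_{j \in S} x_j\,\sigma_{r-1}(S \setminus \{j\}) = r\,\sigma_r(S),
\]
split the second term and rewrite the third as $(\sum_{j\in T^c} x_j)\sigma_{d-1-k}(T^c) - (d-k)\sigma_{d-k}(T^c)$. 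In the generic range $1 \le k \le d-2$ all resulting summands then cancel pairwise (the $x_{v_k}$-terms, the $(\sum_j x_j)$-terms, and the $\sigma_{d-k}(T^c)$-terms each form a $+/-$ pair). The boundary case $k = d-1$ has no extension terms and collapses because both the diagonal and the prefix contribution equal $\prod_{i=1}^{d-1}x_{v_i}\cdot(x_{v_{d-1}} + \sum_{j \in T^c} x_j)$ up to sign.

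Only the row $v = \emptyset$ (with $k = 0$, no prefix contribution, and diagonal reducing to $(d-1)!\sum_j x_j$) survives: combining the diagonal with the extension terms via the same identities gives
\[
 (B(x)m(x))_\emptyset = (d-1)!\Big[\big(\textstyle\sum_j x_j\big)\sigma_{d-1}([n]) - \big(\big(\textstyle\sum_j x_j\big)\sigma_{d-1}([n]) - d\,\sigma_d([n])\big)\Big] = d!\,\sigma_d([n]).
\]
I expect the main obstacle to be clean bookkeeping of the three different factorials $(d-k)!$, $(d-1-k)!$, $(d-k-1)!$ appearing in the three contributions, together with careful handling of the boundary cases $k = 0$ (no prefix, and the $(d-k)x_{w_l}$ summand of the diagonal must be interpreted as absent) and $k = d-1$ (no extensions). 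The underlying symmetric-function manipulations are routine once these two identities are in hand.
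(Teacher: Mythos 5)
Your proposal is correct and follows essentially the same approach as the paper: compute the entry $(B(x)m(x))_v$ for each word $v$ by collecting the (at most three kinds of) nonzero entries in row $v$, factor out $\prod_i x_{v_i}$ and a common factorial, and show the remaining symmetric-function expression vanishes for $v\neq\emptyset$ and equals $d\,\sigma_d([n])$ for $v=\emptyset$. The only cosmetic difference is that you cancel terms via the two elementary $\sigma$-identities, whereas the paper cancels by comparing coefficients of multiaffine monomials and of $x_j^2$-monomials; both are routine bookkeeping of the same computation.
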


\begin{proof}
 We compute the entry of $B(x)\cdot m(x)$ indexed by the word $w_1\ldots w_l$ for $0\leq l\leq d-1$.
The corresponding row of $B(x)$ has the following non-zero entries at the columns indexed by $w_1\ldots w_{l-1}$, $w_1\ldots w_{l}$
and $w_1\ldots w_{l+1}$ for $w_{l+1}\in[n]\setminus \{w_1,\ldots,w_l\}$. If we denote $S=[n]\setminus\{w_1,\ldots,w_{l-1}\}$, these entries
contribute the following summands:
\begin{enumerate}
	\item $w_1\ldots w_{l-1}$ (note that for $l=0$ this case does not occur): \[-(d-l)!\cdot x_{w_l}\cdot \prod_{i=1}^{l-1}x_{w_i}\cdot \sigma_{d-l}(S).\]
	\item $w_1\ldots w_{l}$: \[(d-1-l)!\cdot((d-l-1)\cdot x_{w_l}+\sigma_1(S))\cdot\prod_{i=1}^lx_{w_l}\cdot
					\sigma_{d-1-l}( S\setminus\{w_l\} ).\]
	\item $w_1\ldots w_{l+1}$ (note that for $l=d-1$ this case does not occur): \[(d-1-l)!\cdot x_{w_{l+1}}\cdot\prod_{i=1}^{l+1}x_{w_i} \cdot \sigma_{d-2-l}(S\setminus\{w_l,w_{l+1}\}).\]
\end{enumerate}
Summing these up, we arrive at the following expression for the entry of $B(x)\cdot m(x)$ indexed by the word $w_1\ldots w_l$:
\[( \underbrace{-(d-l)\cdot\sigma_{d-l}(S)}_{(1)}+\underbrace{((d-1-l)\cdot x_{w_l}+\sigma_1(S))
  \cdot\sigma_{d-1-l}( S\setminus\{w_l\} )}_{(2)}\]\[-\underbrace{\sum_{j\in S\setminus\{w_l\}} x_j^2\cdot \sigma_{d-2-l}([n])
  \sigma_{d-2-l}(S\setminus\{w_l,j\})}_{(3)}) \cdot(d-1-l)!\cdot\prod_{i=1}^l x_{w_i}.\] Two types of monomials can appear inside the paranthesis of the above expression:
\begin{enumerate}[a)]
	\item 	The coefficient of a multiaffine monomial in the variables indexed by $S$ is $-(d-l)$ in $(1)$ and in $(2)$
		it is $(d-l)$.
	\item 	The coefficient of $x_j^2$ times a multiaffine monomial in variables indexed by $S\setminus\{w_l,j\}$ is $1$ in $(2)$ and
		$-1$ in $(3)$ in the case $j<d-1$. If $j=d-1$ such monomials do not appear at all.
\end{enumerate}
We can conclude that for $l=0$ we obtain $d!\cdot\sigma_d([n])$ and for $l>0$ we get zero.
\end{proof}

\begin{ex}
 For $d=2$ and $n=3$ the vector $m(x)$ is given by\vspace{-15pt} $$ \bordermatrix{
	  & \cr
\emptyset & x_1+x_2+x_3    \cr
1 & x_1   \cr
2 &  x_2  \cr
3 &  x_3   \cr
}.\demo$$
 \end{ex}

\begin{thm}\label{thm:kleinebranden}
 There is an $\fS_n$-linear map $\psi:\R^n\to\Sym_2(\Ma_{d-1,n})$ such that $\psi(a)$ is positive semidefinite if and only if $a$ is in the hyperbolicity cone of $\sigma_{d,n}$.
\end{thm}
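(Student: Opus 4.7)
The strategy is to obtain $\psi$ by an $\fS_n$-equivariant compression of Brändén's matrix $B(x)$ from \Cref{thm:branden}. Let $U$ denote the vector space whose orthonormal basis is indexed by the words $w=w_1\ldots w_l$ with pairwise distinct letters in $[n]$ and length $0\le l\le d-1$, so that $B(x)$ defines an $\fS_n$-linear map $\R^n\to\Sym_2(U)$, $a\mapsto B(a)$, whose PSD locus is $\Co(\sigma_{d,n},e)$. Since the entries of the kernel vector $m(x)$ from \Cref{lem:brandenkern} depend only on the underlying set of letters of the indexing word, we are led to the $\fS_n$-equivariant surjection
$$\pi\colon U\to\Ma_{d-1,n},\qquad e_{w_1\ldots w_l}\longmapsto x_{w_1}\cdots x_{w_l}\cdot\sigma_{d-1-l}([n]\setminus\{w_1,\ldots,w_l\});$$
surjectivity is immediate at level $l=d-1$. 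Using the standard inner products, form the adjoint $\pi^*\colon\Ma_{d-1,n}\to U$ and set
$$\psi(a)(f,g):=\bigl\langle\pi^*f,\,B(a)\,\pi^*g\bigr\rangle_U.$$
This is manifestly an $\fS_n$-linear map $\R^n\to\Sym_2(\Ma_{d-1,n})$, and the forward implication ``$a\in\Co(\sigma_{d,n},e)\Rightarrow\psi(a)\succeq 0$'' is then a pure compression of $B(a)\succeq 0$.

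For the converse I would exploit the key observation that $m(x)$ itself factors through $\pi^*$: a direct entry-by-entry calculation (using the expansion $a^{S_w}\sigma_{d-1-l}([n]\setminus S_w)(a)=\sum_{S\supset S_w,\,|S|=d-1}a^S$) shows $m(a)=\pi^*(f_a)$ for $f_a:=\sum_{|S|=d-1}a^S\,x^S\in\Ma_{d-1,n}$. Combining this with the identity $B(x)m(x)=d!\,\sigma_{d,n}(x)\,\delta_\emptyset$ from \Cref{lem:brandenkern} and the fact that $\pi^*(x^S)$ has coefficient $1$ at $e_\emptyset$ for every $|S|=d-1$ gives the crucial identity
$$\psi(a)\,f_a=d!\,\sigma_{d,n}(a)\cdot\sigma_{d-1,n}\quad\text{in }\Ma_{d-1,n},$$
hence $\sigma_{d,n}(x)\mid\det\psi(x)$. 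Evaluating the bilinear form at $(f_a,f_a)$ then yields $\psi(a)(f_a,f_a)=d!\,\sigma_{d,n}(a)\,\sigma_{d-1,n}(a)$, which already forces $\sigma_{d,n}(a)\ge 0$ on the open region where $\sigma_{d-1,n}(a)>0$.

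The main obstacle is to promote this partial information to the full equality $\{\psi(a)\succeq 0\}=\Co(\sigma_{d,n},e)$. I see two natural routes. The first is to iterate the above argument with analogous test vectors of the form $\sum_{|T|=k-1}a^T\,x^T\cdot\sigma_{d-k}([n]\setminus T)$ for $1\le k\le d$, which should produce a family of identities forcing $\sigma_{k,n}(a)\ge 0$ for every $k\le d$; this is precisely the standard characterization of $\Co(\sigma_{d,n},e)$. The second, cleaner route is to derive a Schur-complement factorization of $\det B(x)$ along $U=\pi^*(\Ma_{d-1,n})\oplus\pi^*(\Ma_{d-1,n})^\perp$ and transfer the hyperbolicity structure asserted in the second half of \Cref{thm:branden} directly to $\det\psi(x)$, so that $\det\psi(x)$ factors as $\sigma_{d,n}(x)$ times a hyperbolic polynomial whose hyperbolicity cone contains $\Co(\sigma_{d,n},e)$; convexity and connectedness of the spectrahedral cone $\{\psi\succeq 0\}$ through $e$ then yield the desired equality.
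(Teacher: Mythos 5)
Your construction is exactly the paper's compression $\tilde{B}(x)=\frac{1}{d!}Q^tB(x)Q$ with $Q=\pi^*$ (so $\psi=d!\,\tilde{B}$), and your identity $\psi(a)f_a=d!\,\sigma_{d,n}(a)\cdot\sigma_{d-1,n}$ with $f_a=\tilde{m}(a)$ is precisely the paper's \Cref{eq:compsym}. For the converse the paper follows your second route: it asserts tersely that singularity of $\tilde{B}(a)$ on $\{\sigma_{d,n}=0\}$ rules out positive semidefiniteness outside $\Co(\sigma_{d,n},e)$, which unpacks to $\sigma_{d,n}\mid\det\tilde{B}$, hyperbolicity of $\det\tilde{B}$ from $\tilde{B}(e)\succ0$, and the fact that the hyperbolicity cone of a product is the intersection of the factors' cones — so you have correctly identified both the construction and the right way to close the gap.
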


\begin{proof}
 Let $B(x)$ be the spectrahedral representation of the hyperbolicity cone of $\sigma_{d,n}=\sigma_d([n])$ from \cite{BranEle} and $m(x)$ the vector from \Cref{lem:brandenkern}. We have $$B(x)\cdot m(x)=\delta_{\emptyset} \cdot d!\cdot \sigma_d([n])$$ according to \Cref{lem:brandenkern}. The entries of $m(x)$ span $\Ma_{d-1,n}$ as $\R$-vector space. Thus if $\tilde{m}(x)$ is a vector whose entries comprise a basis of $\Ma_{d-1,n}$, there is a unique rectangular real matrix $Q$ of full rank such that $Q\cdot\tilde{m}(x)=m(x)$. Letting $\tilde{B}(x)=\frac{1}{d!}Q^tB(x)Q$ and $v=Q^t\delta_\emptyset$ we obtain \begin{equation}\label{eq:compsym}
 \tilde{B}(x)\cdot \tilde{m}(x)=\frac{1}{d!}Q^tB(x)Q\tilde{m}(x)=\frac{1}{d!}Q^tB(x){m}(x)=Q^t\delta_{\emptyset} \cdot \sigma_d=v\cdot \sigma_d([n]).
 \end{equation}
 Furthermore, it is not hard to see that the map $\psi:a\mapsto\tilde{B}(a)$ is a homomorphism of $\fS_n$-modules if we consider $\tilde{B}(a)$ as an element of $\Sym_2(\Ma_{d-1,n})$ via the chosen basis of $\Ma_{d-1,n}$. But since $\tilde{B}(a)$ is a compression of $B(a)$, we have that $\tilde{B}(a)$ is positive semidefinite whenever $B(a)$ is positive semidefinite. On the other hand \Cref{eq:compsym} implies that $\tilde{B}(a)$ is singular whenever $\sigma_d([n])$ vanishes at $a$.
This implies that $\tilde{B}(a)$ cannot be positive semidefinite if $a$ is not in the hyperbolicity cone of $\sigma_d([n])$. 
\end{proof}

\begin{rem}\label{rem:sizes}
 The size of the spectrahedral representation from \cite{BranEle} is given by the number of words $w_1\ldots w_l$ with letters $w_i\in\{1,\ldots,n\}$ of length $0\leq l\leq d-1$ such that $w_i\neq w_j$ for $i\neq j$. The size of the spectrahedral representation from \cite{matching} is given by the number of simple paths of length (at least $1$ and) at most $d$ starting from a fixed vertex of the complete graph on $n$ vertices. This is the same as the number of words $w_1\ldots w_l$ with letters $w_i\in\{1,\ldots,n-1\}$ of length $0\leq l\leq d-1$ such that $w_i\neq w_j$ for $i\neq j$. Clearly, the latter is smaller than the former. Finally, the representation that we obtain in \Cref{thm:kleinebranden} is of size $\binom{n}{d-1}$, the number of multiaffine monomials of degree $d-1$ in $n$ variables. This is the same as the number of words $w_1\ldots w_l$ with letters $w_i\in\{1,\ldots,n-1\}$ of length $d-2\leq l\leq d-1$ such that $w_i< w_j$ for $i<j$. Thus it is considerably smaller than the other two representations.
\end{rem}

\begin{rem}\label{rem:explicitematrix}
 For future reference we want to make the entities that appear in \Cref{eq:compsym} explicit. To that end, we choose $\tilde{m}(x)$ to be the vector whose entries comprise the monomial basis of $\Ma_{d-1,n}$. Thus we can view $\tilde{m}(x)$ as a column vector whose rows are indexed by $\binom{[n]}{d-1}$ and the entry corresponding to $I\in\binom{[n]}{d-1}$ is the monomial $\prod_{i\in I}x_i$. The columns of the matrix $Q$ are indexed by $\binom{[n]}{d-1}$ and its rows are indexed by words $w_1\ldots w_l$ with letters $w_i\in\{1,\ldots,n\}$ of length $0\leq l\leq d-1$ such that $w_i\neq w_j$ for $i\neq j$. The entry of $Q$ indexed by $(w_1\cdots w_l,I)$ is $1$ if $w_1,\ldots,w_l\in I$ and zero otherwise. This implies that $v=Q^t\delta_\emptyset$ is the all-ones vector $e$. Finally, the rows and columns of the matrix $\tilde{B}(x)$ are both indexed by $\binom{[n]}{d-1}$. The entry of $\tilde{B}(x)$ indexed by $(I,J)$ equals $$\frac{1}{d-|I\cap J|}\sum_{k\in[n]\setminus(I\cup J)}x_k.$$
\end{rem}

\begin{ex}\label{ex:o23}
 For $d=2$ and $n=3$ the matrix $\tilde{B}(x)$ is given by $$\bordermatrix{
	  &  \{1\} & \{2\} &\{3\} \cr
\{1\} & x_2+x_3 & \frac{1}{2}x_3 & \frac{1}{2}x_2  \cr
\{2\} &  \frac{1}{2}x_3 &  x_1+x_3 & \frac{1}{2}x_1 \cr
\{3\} &  \frac{1}{2}x_2 &  \frac{1}{2}x_1 & x_1+x_2   \cr
}.$$It is a spectrahedral representation of the hyperbolicity cone of $\sigma_{2,3}$. Note that a smaller spectrahedral representation is the one from \cite{pol1} given by a $\fS_3$-linear map $\R^3\to\Sym_2(V_{2,1})$, see \Cref{ex:firstderi}.
\demo \end{ex}

\begin{ex}\label{ex:smallernonsymmetric}
 For $d=2$ and $n=4$ the matrix $\tilde{B}(x)$ is given by
 $$\bordermatrix{
	  &  \{1\} & \{2\} &\{3\}&\{4\} \cr
\{1\} & x_2+x_3+x_4 & \frac{1}{2}(x_3+x_4) & \frac{1}{2}(x_2+x_4) &  \frac{1}{2}(x_2+x_3) \cr
\{2\} &  \frac{1}{2}(x_3+x_4) &  x_1+x_3+x_4 & \frac{1}{2}(x_1+x_4) & \frac{1}{2}(x_1+x_3) \cr
\{3\} &  \frac{1}{2}(x_2+x_4) &  \frac{1}{2}(x_1+x_4) & x_1+x_2+x_4 & \frac{1}{2}(x_1+x_2)   \cr
\{4\} &  \frac{1}{2}(x_2+x_3) &  \frac{1}{2}(x_1+x_3) & \frac{1}{2}(x_1+x_2)  & x_1+x_2+x_3  \cr
}.$$It is a spectrahedral representation of the hyperbolicity cone of $\sigma_{2,4}$. There is a smaller spectrahedral representation:
$$A(x)=
\begin{pmatrix}
 x_1+x_2+x_4 & \frac{1}{2}x_2+x_4 & \frac{1}{2}x_1+x_4\\ \frac{1}{2}x_2+x_4 & x_2+x_3+x_4 & \frac{1}{2}x_3+x_4\\ \frac{1}{2}x_1+x_4 & \frac{1}{2}x_3+x_4 & x_1+x_3+x_4
\end{pmatrix}
.$$By \cite[\S3]{twores} there is no representation smaller than $A(x)$. Its determinant $$\det A(x)=\frac{3}{4}\cdot(x_1+x_2+x_3)\cdot\sigma_{2,4}$$is not invariant under the action of $\fS_4$. Therefore, this representation is not symmetric in the sense that it is not given by an $\fS_4$-linear map $\R^4\to\Sym_2(V)$ for some $\fS_4$-module $V$. In fact, we claim that if $V$ is a $3$-dimensional $\fS_4$-module, there is no $\fS_4$-linear map $\psi:\R^4\to\Sym_2(V)$ such that $\psi(a)$ is positive semidefinite if and only if $a$ is in the hyperbolicity cone of $\sigma_{2,4}$. Indeed, such $V$ must have the property that $\Sym_2(V)$ has $V_{3,1}$ as one of its irreducible components. The only $3$-dimensional $\fS_4$-modules with this property are $V_{3,1}$ and $V_{2,1,1}$. Since we have that $\Sym_2(V_{3,1})$ and $\Sym_2(V_{2,1,1})$ are isomorphic, we assume without loss of generality that $V=V_{3,1}$. Now let $\psi:\R^4\to\Sym_2(V_{3,1})$ be a map as above. Then the determinant of $\psi(x)$ is necessarily divisible by $\sigma_{2,4}$ and invariant under $\fS_4$. Thus it must also be divisible by $\sigma_{1,4}$. This shows that $\psi(x)$ is singular for all $x$ from the zero set $V_{3,1}\subset\R^4$ of $\sigma_{1,4}$. But one can check that every nonzero matrix in the $V_{3,1}$ component of $\Sym_2(V_{3,1})$ is nonsingular. This yields the desired contradiction.
\demo \end{ex}

\begin{cor}\label{cor:elemspec}
 The set of all symmetric matrices $X\in\Sym_2(\R^n)$ whose spectrum $\lambda(X)$ is in the hyperbolicity cone of $\sigma_{d+1,n}$ is a spectrahedral cone.
\end{cor}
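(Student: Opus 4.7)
The plan is to read off the corollary as a direct combination of the two main ingredients already established: the short symmetric LMI representation for elementary symmetric polynomials (\Cref{thm:kleinebranden}) and the transfer theorem (\Cref{thm:main}).

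First I would apply \Cref{thm:kleinebranden} with $d$ replaced by $d+1$. This produces an $\fS_n$-linear map $\psi\colon\R^n\to\Sym_2(\Ma_{d,n})$ with the property that $\psi(a)$ is positive semidefinite if and only if $a\in\Co(\sigma_{d+1,n},e)$. Hence, letting $S\subset\R^n$ denote the preimage of the positive semidefinite cone in $\Sym_2(\Ma_{d,n})$ under $\psi$, we have $S=\Co(\sigma_{d+1,n},e)$.

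Next I would invoke \Cref{cor:mashort}, which says that the $\fS_n$-module $\Ma_{d,n}$ is short. Therefore the hypothesis of \Cref{thm:main} is satisfied for $V=\Ma_{d,n}$ and $\varphi=\psi$, and we conclude that $\Lambda(S)\subset\Sym_2(\R^n)$ is a spectrahedral cone. By definition,
\[
\Lambda(S)=\{X\in\Sym_2(\R^n):\,\lambda(X)\in S\}=\{X\in\Sym_2(\R^n):\,\lambda(X)\in\Co(\sigma_{d+1,n},e)\},
\]
which is exactly the set in the statement. This yields the claim.

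There is essentially no obstacle here: all the substantive work has been packaged into \Cref{thm:main} and \Cref{thm:kleinebranden}. The only thing to verify is that the map $\psi$ constructed in the proof of \Cref{thm:kleinebranden} really is $\fS_n$-linear (which is noted in that proof, as $Q$ is built from the $\fS_n$-equivariant inclusion of the monomial basis of $\Ma_{d,n}$ into the word-indexed basis of \cite{BranEle}) and that $\Ma_{d,n}$ is short, which is \Cref{cor:mashort}. One could additionally make the spectrahedral representation of $\Lambda(S)$ explicit by feeding the matrix $\tilde{B}(x)$ from \Cref{rem:explicitematrix} through the construction in \Cref{sec:mainres}, but this is not needed for the statement.
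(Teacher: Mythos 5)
Your proposal is correct and matches the paper's own proof exactly: the paper also cites \Cref{cor:mashort} for shortness of $\Ma_{d,n}$ and then combines \Cref{thm:main} with \Cref{thm:kleinebranden} (applied with $d$ shifted to $d+1$). The index shift and the identification $\Lambda(S) = \{X : \lambda(X)\in\Co(\sigma_{d+1,n},e)\}$ are handled correctly in your write-up.
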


\begin{proof}
 Since $\Ma_{d,n}$ is a short representation by \Cref{cor:mashort}, this follows from \Cref{thm:main} and \Cref{thm:kleinebranden}.
\end{proof}

\begin{rem}
 Using \Cref{rem:minors} we obtain a spectrahedral representation for this set whose size is the dimension of $\Min_{d,n}$. In order to determine this dimension we note that considering $\Min_{d,n}$ as a $\GL_n$-module, it is irreducible with highest weight $(d,d)'$, see for example \cite[Thm.~3.19]{weymanetal}. Thus by \cite[Thm.~6.3(1)]{fultonharris} we have $$\dim(\Min_{d,n})=\prod_{1\leq i\leq d}\left(\prod_{d+1\leq j \leq n} \frac{2+j-i}{j-i}\right)=\prod_{i=1}^d\frac{(n+1-i)(n+2-i)}{(d+1-i)(d+2-i)}.$$In particular, for fixed $d$, the dimension of $\Min_{d,n}$ grows only polynomially in $n$. We further note that $\dim(\Min_{d,n})=\dim(\Min_{n-d,n})$.
\end{rem}

\begin{cor}\label{cor:charapoly}
 Let $X$ be the \emph{generic} symmetric $n\times n$ matrix, i.e., its entries are given by the variables $x_{ij}$ for $1\leq i\leq j\leq n$. Then the hyperbolicity cone of every derivative $\De_I^d \det(X)$ is spectrahedral.
\end{cor}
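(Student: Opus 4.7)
The plan is to reduce the statement to \Cref{cor:elemspec} by identifying the hyperbolicity cone of $\De_I^d\det(X)$ with $\Lambda(\Co(\sigma_{n-d,n},e))$. The whole argument is essentially a reinterpretation of the Renegar derivatives of $\det$ as spectral sets associated to elementary symmetric polynomials, after which all the hard work has already been done.

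First I would observe that both the determinant $\det(X)$ and the identity matrix $I$ are invariant under the conjugation action of $\Or(n)$ on $\Sym_2(\R^n)$. Hence so is the polynomial $\De_I^d\det(X)$, which means that $\De_I^d\det(X)$ depends only on the spectrum $\lambda(X)$. To compute this dependence explicitly, restrict to a diagonal matrix $X=\diag(\lambda)$. Then $X+tI=\diag(\lambda+te)$ and
\[
\det(X+tI)\;=\;\prod_{i=1}^n(\lambda_i+t)\;=\;\sum_{k=0}^n\sigma_{k,n}(\lambda)\,t^{\,n-k}.
\]
Taking the $d$-th derivative in $t$ at $t=0$ yields $(n-d)!\,\sigma_{n-d,n}(\lambda)$. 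By $\Or(n)$-invariance this identity extends to all of $\Sym_2(\R^n)$:
\[
\De_I^d\det(X)\;=\;(n-d)!\cdot\sigma_{n-d,n}(\lambda(X)).
\]

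Next I would translate this identity into a statement about hyperbolicity cones. By the same invariance argument applied to $X+tI$ replaced by $tI-X$, we have $\De_I^d\det(tI-X)=(n-d)!\,\sigma_{n-d,n}(te-\lambda(X))$. Thus the roots in $t$ of $\De_I^d\det(tI-X)$ are exactly the roots of $\sigma_{n-d,n}(te-\lambda(X))$, which are all nonnegative precisely when $\lambda(X)\in\Co(\sigma_{n-d,n},e)$. Therefore
\[
\Co(\De_I^d\det(X),I)\;=\;\bigl\{X\in\Sym_2(\R^n):\lambda(X)\in\Co(\sigma_{n-d,n},e)\bigr\}\;=\;\Lambda\bigl(\Co(\sigma_{n-d,n},e)\bigr).
\]

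Finally, an application of \Cref{cor:elemspec} (with the index shifted so that $n-d$ plays the role of $d+1$) shows that the right-hand side is a spectrahedral cone, which completes the argument. The main step where there is anything to verify is the reduction to the diagonal case, and even that is immediate once $\Or(n)$-invariance of $\De_I^d\det(X)$ is noted; there is no serious obstacle, since all the representation-theoretic and combinatorial work has already been absorbed into \Cref{thm:main} and \Cref{cor:elemspec}.
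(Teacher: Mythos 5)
Your argument matches the paper's in substance --- both identify $\De_I^d\det(X)$ with a positive constant multiple of $\sigma_{n-d,n}(\lambda(X))$ and then invoke \Cref{cor:elemspec}; the paper does this via a Taylor expansion of $\det(tI+X)$ in $t$, while you use $\Or(n)$-invariance plus reduction to the diagonal case, which amounts to the same computation. One small arithmetic slip: the $d$-th $t$-derivative at $t=0$ of $\sum_{k}\sigma_{k,n}(\lambda)\,t^{n-k}$ picks out the $k=n-d$ term and equals $d!\,\sigma_{n-d,n}(\lambda)$, not $(n-d)!\,\sigma_{n-d,n}(\lambda)$, though this positive constant has no effect on the hyperbolicity cone or the conclusion.
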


\begin{proof}
 Let us write $$\det(tI+X)=\sum_{d=0}^n p_{d}(X)t^{d}$$for suitable polynomials $p_d$. Then by Taylor series we have $p_d(X)=\frac{1}{d!}\De_I^{d} \det(X)$. On the other hand, we can express $p_d(X)$ as the elementary symmetric polynomial of degree $n-d$ in the zeros of $\det(tI-X)$. Therefore, we have $p_d(X)=\sigma_{n-d,n}(\lambda(X))$ and the hyperbolicity cone of $p_d$ is the set of all symmetric matrices $A$ such that $\lambda(A)$ is in the hyperbolicity cone of $\sigma_{n-d,n}$. Thus the claim follows from \Cref{cor:elemspec}.
\end{proof}

\begin{cor}
 Let $h=\det A(x)\in\R[x_1,\ldots,x_n]$ where $$A(x):=x_1A_1+\ldots+x_nA_n$$ for real symmetric matrices $A_i$ with the property that $A(e)$ is positive definite. Then the hyperbolicity cone of every derivative $\De_e^d h$ is spectrahedral.
\end{cor}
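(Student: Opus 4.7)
The plan is to reduce this statement to the previous corollary (\Cref{cor:charapoly}) via a pullback argument. First I would observe that for every $d\geq 0$ one has
\[
\De_e^d h(x) \;=\; \De_{A(e)}^d \det Y \big|_{Y = A(x)},
\]
which follows by induction from the chain rule together with the identity $A(x+te) = A(x) + tA(e)$. Thus $\De_e^d h$ is the pullback under the linear map $L:\R^n\to\Sym_2(\R^N)$, $L(x)=A(x)$, of the polynomial $\De_{A(e)}^d\det$ on the space $\Sym_2(\R^N)$ of real symmetric $N\times N$ matrices, where $N$ is the size of the $A_i$.

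Next I would use that $A(e)$ is positive definite to normalize the directional derivative to the identity direction. Choose an invertible matrix $S$ with $A(e)=S^t S$, and set $\tilde A(x) = S^{-t}A(x)S^{-1}$, so that $\tilde A(e)=I$. Since $\det A(x) = (\det S)^2\det\tilde A(x)$, the polynomial $\tilde h(x)=\det\tilde A(x)$ is hyperbolic with respect to $e$ with the same hyperbolicity cone as $h$, and similarly $\De_e^d h = (\det S)^2\cdot \De_e^d\tilde h$, so $\Co(\De_e^d h,e) = \Co(\De_e^d\tilde h,e)$. By the first paragraph applied to $\tilde A$,
\[
\De_e^d \tilde h(x) \;=\; \De_I^d \det Y \big|_{Y=\tilde A(x)}.
\]

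Now apply \Cref{cor:charapoly}: the hyperbolicity cone of $\De_I^d\det$ on $\Sym_2(\R^N)$ is a spectrahedral cone, i.e.\ cut out by a linear matrix inequality $M(Y)\succeq 0$ for some linear pencil $M$ on $\Sym_2(\R^N)$. Since $\tilde A(e)=I$ lies in this cone, it is the hyperbolicity cone with respect to $\tilde A(e)$, and pulling back along the linear map $x\mapsto \tilde A(x)$ (which sends the hyperbolicity direction $e$ to $I$) produces the hyperbolicity cone $\Co(\De_e^d\tilde h,e)$ of the pullback $\De_e^d\tilde h$. But a linear preimage of a spectrahedral cone is spectrahedral: the set $\{a\in\R^n : M(\tilde A(a))\succeq 0\}$ is cut out by the linear matrix inequality obtained by substituting the linear expression $\tilde A(a)$ into $M$. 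Hence $\Co(\De_e^d h,e)=\Co(\De_e^d\tilde h,e)$ is spectrahedral, as claimed.

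The only non-routine point in this plan is the bookkeeping that matches hyperbolicity cones under the two linear changes of coordinates: one must check that the hyperbolicity direction $e$ is preserved by $L$ (it is sent to $A(e)$, resp.\ to $I$ after the normalization), and that the preimage of the hyperbolicity cone of a hyperbolic polynomial under a linear map taking the hyperbolicity direction to the hyperbolicity direction coincides with the hyperbolicity cone of the pullback. Both facts are standard, so the argument really is a direct reduction to \Cref{cor:charapoly}.
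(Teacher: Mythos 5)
Your proof is correct and follows essentially the same route as the paper: normalize so that $A(e)=I$ by conjugating $A(x)$ with a suitable invertible matrix, and then reduce to \Cref{cor:charapoly} by viewing $\De_e^d h$ as the pullback of $\De_I^d\det$ along the linear map $x\mapsto A(x)$ (equivalently, by restricting to the subspace spanned by $A_1,\ldots,A_n$). The paper states this more tersely, but the content — the normalization of $A(e)$ and the observation that the hyperbolicity cone of the pullback is the linear preimage of the hyperbolicity cone, hence spectrahedral — is identical.
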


\begin{proof}
 After replacing $A(x)$ by $S^tA(x)S$ for a suitable invertible matrix $S$, we can assume that $A(e)=I$. Then the claim follows from \Cref{cor:charapoly} by restricting to the subspace spanned by $A_1,\ldots,A_n$.
\end{proof}

\section{Wronskian polynomials and Newton's inequalities}\label{sec:newton}
Let $h\in\R[x_1,\ldots,x_n]$ be a square-free homogeneous polynomial which is hyperbolic with respect to $e\in\R^n$. It was observed in \cite[Thm.~3.1]{multiaffine} that the hyperbolicity cone $\Co (h,e)$ can be described as a linear section of the cone of nonnegative polynomials: 
$$\Co(h,e)=\{a\in\R^n:\, \Delta_{e,a}(h)\geq0\textrm{ on }\R^n\}$$
where $\Delta_{e,a}(h)=\De_eh\cdot\De_ah-h\cdot\De_e\De_ah$ is the \emph{Wronskian polynomial}. It was further shown in \cite[Thm.~4.2]{multiaffine} that if $h=\det A(x)$ where $$A(x)=x_1A_1+\ldots+x_nA_n$$ for real symmetric matrices $A_i$ with $A(e)$ positive definite, then we even have $$\Co(h,e)=\{a\in\R^n:\, \Delta_{e,a}(h)\textrm{ is a sum of squares of polynomials}\}.$$We will show in this section that this is also true for the derivatives $\De_e^d h$. More precisely, we will show that the matrices in the spectrahedral representation of the hyperbolicity cone of $\De_e^d h$ can serve as \emph{Gram matrices} for $\Delta_{e,a}(\De_e^d h)$.

Let $X$ be the generic symmetric $n\times n$ matrix and $M_1,\ldots,M_N$, $N=\binom{n}{d}$, the symmetric $d\times d$ minors of $X$. We complete $M_1,\ldots,M_N$ to an orthonormal basis $M_1,\ldots,M_r$ of $\Min_{d,n}$ (with respect to a suitable $\Or(n)$-invariant scalar product) and let $M=(M_1,\ldots,M_r)^t$. Let $\Phi:\Sym_2(\R^n)\to\Sym_2(\Min_{d,n})$ be the $\Or(n)$-linear map that we get from \Cref{cor:elemspec}. In the following, we identify $\Phi(X)$ with its representing matrix with respect to the basis $M_1,\ldots,M_r$. Finally, we denote $P_d(X)=\sigma_{d,n}(\lambda(X))$, which is a homogeneous polynomial in the entries of $X$ satifying $P_d(X)=\frac{1}{(n-d)!}\De_I^{n-d}\det(X)$. The polynomial $P_d(X)$ can also be described as the sum of all symmetric $d\times d$ minors of $X$.

\begin{lem}\label{lem:kernelmin}
 For all $A\in\Sym_2(\R^n)$ we have that $$\Phi(A)\cdot M(A)=w\cdot P_{d+1}(A)$$where $w$ is the vector whose first $N$ entries are $1$ and all other entries are $0$.
\end{lem}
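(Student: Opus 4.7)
The plan is to exploit $\Or(n)$-equivariance to reduce to the diagonal case, and then invoke the spectrahedral identity from \Cref{thm:kleinebranden} applied to $\sigma_{d+1,n}$.

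Using the chosen orthonormal basis, I would identify the vector $M(A)\in\R^r$ with the unique element $\kappa_A\in\Min_{d,n}$ characterized by $\langle \kappa_A, p \rangle = p(A)$ for every $p\in\Min_{d,n}$; then $\kappa_{SAS^t}=S\cdot\kappa_A$ under $S\in\Or(n)$. Since $\Phi$ is $\Or(n)$-linear, $P_{d+1}$ is $\Or(n)$-invariant, and $w$ corresponds to $\sum_{i=1}^N M_i=P_d\in\Min_{d,n}$ (also $\Or(n)$-invariant since it is a symmetric function of the eigenvalues), both sides of the claimed identity transform the same way under the $\Or(n)$-action. By the spectral theorem, every symmetric matrix is orthogonally conjugate to a diagonal one, so it suffices to establish the identity for $A=\diag(a)$ with $a\in\R^n$.

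For the diagonal case, I would first arrange the basis extension $M_{N+1},\ldots,M_r$ to sit in the orthogonal complement (with respect to the invariant inner product inherited from $\Sym_2(\wedge^d\R^n)$) of the principal-minor subspace $\Min_{d,n}^{\mathrm{sym}}$ spanned by $M_1,\ldots,M_N$. This is possible because the subspaces $T^{d,n}_{J_1,J_2}$ in $\Sym_2(\wedge^d\R^n)$ are pairwise orthogonal by \Cref{lem:orth}, and the symmetric principal minors correspond under $A\mapsto f_A$ of \Cref{rem:minors} to the distinguished subspace $T^{d,n}_{\emptyset,\emptyset}$ spanned by the $t_{I,\emptyset,\emptyset}=e_I\otimes e_I$. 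Under this choice, $M_i(\diag(a))=0$ for $i>N$, while the first $N$ entries of $M(\diag(a))$ form the vector $\tilde m(a)$ of multiaffine monomials of degree $d$ in $a$, identified with the monomial basis of $\Ma_{d,n}$.

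By \Cref{prop:resphi}, the restriction of $\Phi(\diag(a))$ to the embedded copy of $\Ma_{d,n}$ inside $\Min_{d,n}$ equals $\varphi(a)=\tilde B(a)$, the symmetric matrix from \Cref{thm:kleinebranden} applied to $\sigma_{d+1,n}$. Equation \eqref{eq:compsym} (with $d\to d+1$) then yields $\tilde B(a)\cdot\tilde m(a)=e\cdot\sigma_{d+1,n}(a)=e\cdot P_{d+1}(\diag(a))$, where $e\in\R^N$ is the all-ones vector. This recovers the first $N$ coordinates of $w\cdot P_{d+1}(\diag(a))$. To show that the remaining $r-N$ coordinates of $\Phi(\diag(a))M(\diag(a))$ vanish, I would invoke \Cref{lem:wareorth}: the subspaces $W(J_1,J_2)$ are pairwise orthogonal with respect to the bilinear form $\Phi(\diag(a))$, so the associated self-adjoint endomorphism preserves each $W(J_1,J_2)$, and $\kappa_{\diag(a)}$ lies in $W(\emptyset,\emptyset)\cong\Min_{d,n}^{\mathrm{sym}}$ by the previous step, whence so does its image.

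The main technical hurdle will be cleanly justifying the identification of $\Min_{d,n}$ with the compressed $\Or(n)$-module $\tilde W$ from \Cref{rem:minors}, so that the restriction statement of \Cref{prop:resphi} and the block-orthogonal structure of \Cref{lem:wareorth} transfer to the $\Min_{d,n}$-setting with the intended correspondence $\Ma_{d,n}\leftrightarrow\Min_{d,n}^{\mathrm{sym}}$ and $V_{n-d,d}\leftrightarrow$ the corresponding $\tilde W_{n-d,d}$-summand.
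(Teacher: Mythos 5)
Your proof takes essentially the same approach as the paper: reduce to the diagonal case via $\Or(n)$-equivariance (using that $P_{d+1}$ and $w\leftrightarrow P_d=\sum_i M_i$ are $\Or(n)$-invariant and $M$ transforms equivariantly), then compute the first $N$ coordinates via the identity \eqref{eq:compsym} applied to $\tilde B(a)\tilde m(a)$ and kill the remaining coordinates via the block-orthogonality of $\Phi(\Lambda)$ with respect to the $W(J_1,J_2)$-decomposition. Your write-up is in fact somewhat more explicit than the paper's, which asserts the block form $\Phi(\Lambda)=\bigl(\begin{smallmatrix}\tilde B(\lambda)&0\\0&C(\lambda)\end{smallmatrix}\bigr)$ merely ``by construction''; the identification of $\Min_{d,n}$ with $\tilde W$ that you flag as a technical hurdle is also left implicit there, so that concern is shared by the paper rather than a gap specific to your argument.
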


\begin{proof}
 Let $S\in\Or(n)$ such that $SAS^t$ is the diagonal matrix $\Lambda$ with diagonal $\lambda\in\R^n$. We denote by $\rho(S)$ the representing matrix of the linear action of $S$ on $\Min_{d,n}$ with respect to the orthonormal basis $M_1,\ldots,M_r$. Note that $\rho(S)$ is an orthogonal matrix. By construction we have $$\Phi(SAS^t)=\Phi(\Lambda)=\begin{pmatrix} \tilde{B}(\lambda) & 0 \\ 0 & C(\lambda)                                                                                                                                                                                                                                                                    \end{pmatrix}$$where $\tilde{B}(\lambda)$ is the matrix from \Cref{rem:explicitematrix} and $C(\lambda)$ some other real symmetric matrix. Then, further using the notation of \Cref{rem:explicitematrix}, we have $$\Phi(\Lambda)\cdot M(\Lambda)=\begin{pmatrix} \tilde{B}(\lambda) & 0 \\ 0 & C(\lambda)                                                                                                                                                                                                                                                                    \end{pmatrix}\cdot \begin{pmatrix}\tilde{m}(\lambda) \\ 0 \end{pmatrix}=w\cdot \sigma_{d+1,n}(\lambda(A)).$$Because $\Phi$ is $\Or(n)$-linear, we  obtain $$\rho(S)\cdot\Phi(A)\cdot\rho(S)^t\cdot M(\Lambda)=w\cdot P_{d+1}(A).$$Since  $M_1+\cdots+M_N$ is invariant under $\Or(n)$, we have $\rho(S)^t\cdot w=w$ which shows\[\Phi(A)\cdot M(A)=\Phi(A)\cdot\rho(S)^t\cdot M(\Lambda)=\rho(S)^t\cdot w\cdot P_{d+1}(A)=w\cdot P_{d+1}(A).\qedhere\]
\end{proof}

From this we can deduce the main result of this section.

\begin{thm}\label{thm:wron}
 For all $A\in\Co(P_{d+1},I)^\circ$ we have ${P_{d+1}(A)},{P_d(A)}>0$ and  $$\De_AP_{d+1}(X)\cdot P_d(X)-P_{d+1}(X)\cdot\De_AP_d(X)-\frac{P_{d+1}(A)}{P_d(A)}\cdot P_d(X)^2$$ is a sum of squares of polynomials in the entries of $X$.
\end{thm}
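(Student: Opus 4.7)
The strategy is to use the spectrahedral representation $\Phi\colon\Sym_2(\R^n)\to\Sym_2(\Min_{d,n})$ from \Cref{cor:elemspec} as a Gram matrix and to exhibit the Wronskian part of the claimed expression as a bilinear form whose ``kernel'' is $\Phi(A)$. The positivity claims $P_{d+1}(A),P_d(A)>0$ on $\Co(P_{d+1},I)^{\circ}$ are immediate: $P_{d+1}(A)>0$ by definition of the open hyperbolicity cone, and since $\De_IP_{d+1}$ is a positive multiple of $P_d$, the inclusion $\Co(P_{d+1},I)^{\circ}\subset\Co(P_d,I)^{\circ}$ of interiors of derivative cones (a direct consequence of Rolle's theorem) yields $P_d(A)>0$.

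The central step is to establish the master identity
\[
M(X)^{t}\,\Phi(A)\,M(X)\;=\;P_d(X)\,\De_AP_{d+1}(X)\,-\,P_{d+1}(X)\,\De_AP_d(X).
\]
To derive it, multiply the identity of \Cref{lem:kernelmin} on the left by $M(X)^{t}$ and use that $w^{t}M(X)=M_1(X)+\cdots+M_N(X)=P_d(X)$, the sum of the principal $d\times d$ minors of $X$; this yields $M(X)^{t}\Phi(X)M(X)=P_d(X)P_{d+1}(X)$. Differentiating both sides in direction $A$, the right-hand side becomes $\De_AP_d(X)\,P_{d+1}(X)+P_d(X)\,\De_AP_{d+1}(X)$, while on the left the product rule, together with the linearity $\De_A\Phi(X)=\Phi(A)$ and the identity $w^{t}\De_AM(X)=\De_AP_d(X)$, produces $2\De_AP_d(X)\,P_{d+1}(X)+M(X)^{t}\Phi(A)M(X)$. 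Rearrangement then gives the stated identity.

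With the master identity in hand, the SOS certificate is produced by testing $\Phi(A)\succeq0$ against the polynomial vector
\[
u(X)\;:=\;P_d(A)\,M(X)\,-\,P_d(X)\,M(A).
\]
Expanding $u(X)^{t}\Phi(A)u(X)$ and applying \Cref{lem:kernelmin} at the point $A$ (which yields $M(A)^{t}\Phi(A)M(A)=P_d(A)P_{d+1}(A)$ and $M(X)^{t}\Phi(A)M(A)=P_d(X)P_{d+1}(A)$), the cross terms simplify and the remaining quadratic term is rewritten via the master identity to give
\[
u(X)^{t}\Phi(A)\,u(X)\;=\;P_d(A)^{2}\!\left[\De_AP_{d+1}(X)\,P_d(X)-P_{d+1}(X)\,\De_AP_d(X)-\frac{P_{d+1}(A)}{P_d(A)}\,P_d(X)^{2}\right]\!.
\]
Since $\Phi(A)\succeq0$ we may write $\Phi(A)=LL^{t}$; then $u(X)^{t}\Phi(A)u(X)=\|L^{t}u(X)\|^{2}$ is manifestly a sum of squares of polynomials in the entries of $X$. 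Dividing by $P_d(A)^{2}>0$ delivers the claimed SOS representation.

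I expect the main obstacle to be the derivation of the master identity, where one must keep careful track of both the differentiation of the matrix-valued linear map $\Phi(X)$ (whose linearity gives $\De_A\Phi(X)=\Phi(A)$) and of the polynomial vector $M(X)$, and verify that the two ``outer'' contributions on the left combine to cancel exactly one of the two summands of $\De_A(P_dP_{d+1})$ on the right. Everything downstream is routine quadratic bookkeeping.
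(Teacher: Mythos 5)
Your proof is correct, and while the derivation of the master identity $M(X)^{t}\Phi(A)M(X)=\De_AP_{d+1}\cdot P_d-P_{d+1}\cdot\De_AP_d$ is essentially the same as the paper's (the paper differentiates the vector equation $\Phi(X)M(X)=wP_{d+1}(X)$ from \Cref{lem:kernelmin} and then contracts with $M(X)^{t}$, whereas you first form the scalar equation $M(X)^{t}\Phi(X)M(X)=P_d(X)P_{d+1}(X)$ and then differentiate; the bookkeeping is nearly identical), your SOS step is genuinely different and arguably cleaner. The paper keeps the Gram vector $M(X)$ fixed and modifies the Gram matrix to $\Phi(A)-\frac{P_{d+1}(A)}{P_d(A)}ww^{t}$, which then requires a separate rank-one-update argument: one must show the linear-in-$t$ polynomial $\det(\Phi(A)-t\,w w^{t})$ has its unique nonnegative zero at $t_0=\frac{P_{d+1}(A)}{P_d(A)}$, so that the perturbed matrix remains positive semidefinite. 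You instead keep $\Phi(A)$ fixed and modify the test vector to $u(X)=P_d(A)M(X)-P_d(X)M(A)$; the cross-terms coming from $M(X)^{t}\Phi(A)M(A)=P_d(X)P_{d+1}(A)$ and $M(A)^{t}\Phi(A)M(A)=P_d(A)P_{d+1}(A)$ exactly produce the subtracted $P_d(X)^2$ term, and positive semidefiniteness is needed only for $\Phi(A)$ itself, which is immediate from \Cref{cor:elemspec}. What the paper's route buys is the explicit PSD Gram matrix $\Phi(A)-\frac{P_{d+1}(A)}{P_d(A)}w w^{t}$ paired with the fixed monomial vector $M(X)$, which may be convenient if one wants the certificate in that standard form; what your route buys is the elimination of the determinant/rank-one-update argument, and it makes transparent that the certificate is a perfect consequence of $\Phi(A)\succeq0$ alone.
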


\begin{proof}
 The first claim is clear since $A$ is in the interior of the hyperbolicity cones of both $P_{d}$ and $P_{d+1}$. In order to prove the second claim, we proceed as in \cite[p.~261]{kumnaplau}. By \Cref{lem:kernelmin} we have that $$\Phi(X)\cdot M(X)=w\cdot P_{d+1}(X)$$where $w$ is the vector whose first $N$ entries are $1$ and all other entries are $0$. Taking the derivative in direction $A$ of both sides gives us $$\Phi(A)\cdot M(X)+\Phi(X)\cdot \De_AM(X)=w\cdot\De_A P_{d+1}(X).$$Multiplying from the left by $M(X)^t$ and another application of \Cref{lem:kernelmin} gives:$$M(X)^t\cdot\Phi(A)\cdot M(X)+(w\cdot P_{d+1}(X))^t\cdot \De_AM(X)=M(X)^t\cdot w\cdot \De_AP_{d+1}(X).$$Since $M(X)^t\cdot w=P_d(X)$ we obtain the identity
 $$\De_AP_{d+1}\cdot P_d-P_{d+1}\cdot\De_AP_d=M(X)^t\cdot\Phi(A)\cdot M(X).$$Finally, subtracting $\frac{P_{d+1}(A)}{P_d(A)}\cdot P_d(X)^2=\frac{P_{d+1}(A)}{P_d(A)}\cdot M(X)^t\cdot w\cdot w^t \cdot M(X)$ we get that $$M(X)^t\cdot(\Phi(A)-\frac{P_{d+1}(A)}{P_d(A)} \cdot w\cdot w^t)\cdot M(X)$$is the polynomial in question. It therefore suffices to show that the matrix $$\Phi(A)-\frac{P_{d+1}(A)}{P_d(A)} \cdot w\cdot w^t$$ is positive semidefinite. Since $w\cdot w^t$ is of rank one and since $\Phi(A)$ is positive semidefinite, the polynomial $$\det(\Phi(A)-t\cdot w\cdot w^t)\in\R[t]$$ has at exactly one zero $t_0\geq0$. Moreover, the matrix $\Phi(A)-\lambda \cdot w\cdot w^t$ is positive semidefinite for all $\lambda\leq t_0$. It thus suffices to show that $t_0=\frac{P_{d+1}(A)}{P_d(A)}$. We have \[(\Phi(A)-\frac{P_{d+1}(A)}{P_d(A)} \cdot w\cdot w^t)\cdot M(A)=w\cdot(P_{d+1}(A)-\frac{P_{d+1}(A)}{P_d(A)} \cdot P_d(A))=0.\]Thus $\frac{P_{d+1}(A)}{P_d(A)}$ is a zero of  $\det(\Phi(A)-t\cdot w\cdot w^t)$.
\end{proof}

\begin{cor}[Newton's inequalities for matrices]\label{cor:newtonmatrix}
 The polynomial $$\left(\frac{P_{d}(X)}{\binom{n}{d}}\right)^2-\left(\frac{P_{d+1}(X)}{\binom{n}{d+1}}\right)\cdot \left(\frac{P_{d-1}(X)}{\binom{n}{d-1}}\right)$$ is a sum of squares of polynomials in the entries of $X$.
\end{cor}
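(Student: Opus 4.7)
The plan is to deduce this corollary directly from \Cref{thm:wron} by specializing $A$ to the identity matrix $I$. Since $I$ lies in the interior of the hyperbolicity cone $\Co(P_{d+1},I)$ by definition, \Cref{thm:wron} applies and tells us that
\[
\De_I P_{d+1}(X)\cdot P_d(X)-P_{d+1}(X)\cdot \De_I P_d(X)-\frac{P_{d+1}(I)}{P_d(I)}\cdot P_d(X)^2
\]
is a sum of squares of polynomials in the entries of $X$. It then suffices to verify that this expression is a positive real scalar multiple of the polynomial in the statement, since sums of squares are preserved under multiplication by positive real numbers.

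The next step is a direct computation of the two quantities $\De_I P_k(X)$ and $P_k(I)$. Substituting $\lambda(X)$ by $\lambda(X+sI)=\lambda(X)+s\cdot(1,\ldots,1)$ and differentiating at $s=0$ yields
\[
\De_I P_k(X)=\sum_{i=1}^n\frac{\partial \sigma_{k,n}}{\partial y_i}(\lambda(X))=(n-k+1)\,\sigma_{k-1,n}(\lambda(X))=(n-k+1)\,P_{k-1}(X),
\]
a standard identity for elementary symmetric polynomials. Plugging in $X=I$, we obtain $P_k(I)=\sigma_{k,n}(1,\ldots,1)=\binom{n}{k}$. Substituting these into the expression above turns the sum of squares into
\[
(n-d)P_d(X)^2-(n-d+1)P_{d+1}(X)P_{d-1}(X)-\frac{\binom{n}{d+1}}{\binom{n}{d}}P_d(X)^2=\frac{(n-d)d}{d+1}P_d(X)^2-(n-d+1)P_{d+1}(X)P_{d-1}(X),
\]
where the last equality uses $\binom{n}{d+1}/\binom{n}{d}=(n-d)/(d+1)$.

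Finally, a short check shows that this polynomial is a positive multiple of the one in the statement. Indeed, using $\binom{n}{d+1}\binom{n}{d-1}/\binom{n}{d}^2=\frac{(n-d)d}{(d+1)(n-d+1)}$, we see that dividing the expression by the positive constant $\binom{n}{d}^2\cdot\frac{(n-d)d}{d+1}$ gives
\[
\left(\frac{P_d(X)}{\binom{n}{d}}\right)^2-\frac{(n-d+1)(d+1)}{(n-d)d\binom{n}{d}^2}P_{d+1}(X)P_{d-1}(X)=\left(\frac{P_d(X)}{\binom{n}{d}}\right)^2-\frac{P_{d+1}(X)P_{d-1}(X)}{\binom{n}{d+1}\binom{n}{d-1}},
\]
which is exactly the polynomial in the statement. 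Since dividing by a positive constant preserves being a sum of squares, the corollary follows. There is no real obstacle here beyond careful bookkeeping of the combinatorial constants; all the heavy lifting is done by \Cref{thm:wron}.
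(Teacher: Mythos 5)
Your proof is correct and follows the same route as the paper, which simply cites \Cref{thm:wron} with $A=I$ and leaves the combinatorial bookkeeping implicit; you have supplied exactly that missing bookkeeping, correctly computing $\De_I P_k = (n-k+1)P_{k-1}$ and $P_k(I)=\binom{n}{k}$, and verifying that the specialized expression is a positive multiple of the stated polynomial.
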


\begin{proof}
 This is \Cref{thm:wron} for $A=I$.
\end{proof}

\begin{cor}[Classical Newton's inequalities]\label{cor:newton}
 The polynomial $$\left(\frac{\sigma_{d,n}(x)}{\binom{n}{d}}\right)^2-\left(\frac{\sigma_{d+1,n}(x)}{\binom{n}{d+1}}\right)\cdot \left(\frac{\sigma_{d-1,n}(x)}{\binom{n}{d-1}}\right)$$ is a sum of squares of polynomials in $x_1,\ldots,x_n$.
\end{cor}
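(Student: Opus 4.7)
The strategy is simply to specialize \Cref{cor:newtonmatrix} to diagonal matrices. Let $X$ denote the generic symmetric $n\times n$ matrix, with entries $x_{ij}$ for $1\leq i\leq j\leq n$. By \Cref{cor:newtonmatrix} there exist polynomials $q_1,\ldots,q_N\in\R[x_{ij}:1\leq i\leq j\leq n]$ such that
\[
 \left(\frac{P_{d}(X)}{\binom{n}{d}}\right)^2-\left(\frac{P_{d+1}(X)}{\binom{n}{d+1}}\right)\cdot \left(\frac{P_{d-1}(X)}{\binom{n}{d-1}}\right)=\sum_{k=1}^N q_k(X)^2.
\]

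Now I would substitute $x_{ii}=x_i$ for $1\leq i\leq n$ and $x_{ij}=0$ for $i<j$, so that $X$ becomes the diagonal matrix with diagonal entries $x_1,\ldots,x_n$. The eigenvalues of this matrix are precisely $x_1,\ldots,x_n$, hence $P_k(X)=\sigma_{k,n}(\lambda(X))$ specializes to $\sigma_{k,n}(x_1,\ldots,x_n)$ for every $k$. The left-hand side of the identity above therefore becomes exactly the expression appearing in the corollary, while the right-hand side remains a sum of squares of polynomials, now in the variables $x_1,\ldots,x_n$. This yields the claim.

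There is no real obstacle here: the only thing worth checking is that the specialization of each $q_k(X)$ to a diagonal matrix really is a polynomial in $x_1,\ldots,x_n$ (which is obvious, since $q_k$ is a polynomial in the $x_{ij}$) and that the specialization of a sum of squares is again a sum of squares (which is immediate from $(q_k(X)|_{\mathrm{diag}})^2=q_k(X)^2|_{\mathrm{diag}}$). Thus \Cref{cor:newton} is an immediate consequence of \Cref{cor:newtonmatrix}.
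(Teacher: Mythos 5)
Your proof is correct and is exactly the paper's own argument: the paper's proof of \Cref{cor:newton} is the single line ``This is restricting \Cref{cor:newtonmatrix} to diagonal matrices,'' and you have simply spelled out the (straightforward) details of that restriction.
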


\begin{proof}
 This is restricting \Cref{cor:newtonmatrix} to diagonal matrices.
\end{proof}

\begin{rem}
 Since $\binom{n}{d}^2\geq\binom{n}{d+1}\binom{n}{d-1}$ our \Cref{cor:newton} also implies that$${\sigma_{d,n}(x)}^2-{\sigma_{d+1,n}(x)}\cdot {\sigma_{d-1,n}(x)}$$is a sum of squares which was previously shown in \cite[Prop.~6]{wronsos}.
\end{rem}

\begin{cor}\label{cor:sos}
 Let $h=\De_e^k\det A(x)\in\R[x_1,\ldots,x_n]$ where $$A(x):=x_1A_1+\ldots+x_nA_n$$ for real symmetric matrices $A_i$ with the property that $A(e)$ is positive definite. Then the polynomial $$\Delta_{e,a}(h)-\frac{h(a)}{\De_eh(a)}\cdot (\De_eh)^2$$ is a sum of squares for all $a\in\Co(h,e)$ with $\De_eh(a)\neq0$. In particular, the Wronskian $\Delta_{e,a}(h)$ is a sum of squares for all $a\in\Co(h,e)$.
\end{cor}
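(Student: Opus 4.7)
The plan is to deduce the corollary from \Cref{thm:wron} via a chain-rule substitution, after a standard normalization. As in the proof of \Cref{cor:charapoly}, by replacing each $A_i$ with $S^t A_i S$ for a suitable invertible $S$, I may assume $A(e)=I$ without changing the content of the statement. Define the linear map $\Pi\colon\R^n\to\Sym_2(\R^N)$ by $\Pi(x)=A(x)$, so that $\Pi(e)=I$.

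Using the identity $\De_I^d\det(X)=d!\cdot P_{n-d}(X)$ from the proof of \Cref{cor:charapoly}, the chain rule gives $h(x)=k!\cdot P_{n-k}(\Pi(x))$. A further application of the chain rule, combined with the identity $\De_I P_d=(n-d+1)P_{d-1}$ (immediate by computing on diagonal matrices), yields
\[
\De_e h(x) = (k+1)!\cdot P_{n-k-1}(\Pi(x)), \qquad \De_a h(x) = k!\cdot (\De_{\Pi(a)}P_{n-k})(\Pi(x)),
\]
as well as $\De_e\De_a h(x) = (k+1)!\cdot (\De_{\Pi(a)}P_{n-k-1})(\Pi(x))$.

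Next I apply \Cref{thm:wron} with $d+1=n-k$ to the matrix $\Pi(a)$. This is legitimate whenever $\Pi(a)$ lies in the interior of $\Co(P_{n-k},I)$, which is a dense condition in $\Co(h,e)$: for any $a\in\Co(h,e)$ and $\epsilon>0$ one has $a+\epsilon e\in\Co(h,e)$ and $\Pi(a+\epsilon e)=\Pi(a)+\epsilon I\in\Co(P_{n-k},I)^\circ$. \Cref{thm:wron} yields an explicit sum-of-squares expression in the entries of $X$. Substituting the linear forms $X=\Pi(x)$ preserves the sum-of-squares property, and plugging in the identities above, a direct computation (carefully tracking the factorials) shows that the substituted polynomial equals
\[
\frac{1}{k!(k+1)!}\left(\Delta_{e,a}(h)(x) - \frac{h(a)}{\De_e h(a)}\,(\De_e h(x))^2\right),
\]
proving the first claim on the dense subset just described.

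Finally, closedness of the sum-of-squares cone among polynomials of bounded degree, together with continuity of the map $a\mapsto\Delta_{e,a}(h)-\frac{h(a)}{\De_e h(a)}(\De_e h)^2$ on the open locus $\{\De_e h\neq 0\}$, extends the first claim to all $a\in\Co(h,e)$ with $\De_e h(a)\neq 0$. For the ``in particular'' statement, on the dense subset $\{a\in\Co(h,e):\De_e h(a)\neq 0\}$ one has $\tfrac{h(a)}{\De_e h(a)}\geq 0$, so the first claim combined with nonnegativity shows $\Delta_{e,a}(h)$ is itself a sum of squares there; one more closure step delivers this for all of $\Co(h,e)$. The main substantive input is \Cref{thm:wron}; the only real obstacle is the chain-rule bookkeeping that produces the precise coefficient $\tfrac{1}{k!(k+1)!}$.
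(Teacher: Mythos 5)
Your proof is correct and follows essentially the same route as the paper's: normalize to $A(e)=I$, pull back the sum-of-squares identity of \Cref{thm:wron} along the linear substitution $X\mapsto A(x)$ (what the paper phrases as ``restricting to the subspace spanned by $A_1,\ldots,A_n$''), and finish by passing to the limit using closedness of the SOS cone. The only difference is that you have carried out the chain-rule and factorial bookkeeping explicitly, which the paper leaves implicit.
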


\begin{proof}
 After replacing $A(x)$ by $S^tA(x)S$ for a suitable invertible matrix $S$, we can assume that $A(e)=I$. Then the claim follows from \Cref{thm:wron} by restricting to the subspace spanned by $A_1,\ldots,A_n$ and the fact that the cone of sums of squares is closed.
\end{proof}

\begin{rem}\label{cor:ineq1}
 For arbitrary hyperbolic polynomials the polynomial in \Cref{cor:sos} is still globally nonnegative. This can be regarded as a strengthening of the correlation inequality for hyperbolic polynomials. The following proof of this observation has been pointed out to us by  an anonymous referee after we gave a more complicated argument in a previous version of this article. Let $h\in\R[x_1,\ldots,x_n]_d$ be hyperbolic with respect to $e\in\R^n$ and $a\in\Co(h,e)$. Then for all $x\in\R^n$ the zeros of $f(t):=h(te+x)\in\R[t]$ are \emph{interlaced} by those of $g(t):=\De_a h(te+x)\in\R[t]$ in the sense that if $\alpha_1\leq\cdots\leq\alpha_d$ are the roots of $f$, and $\beta_1\leq\cdots\leq\beta_{d-1}$ the roots of $g$, we have $\alpha_i\le\beta_i\le\alpha_{i+1}$  for all  $i=1,\dots,d-1$, see e.g. \cite[Thm.~3.1]{multiaffine}. This implies that the \emph{B\'ezout matrix} $B(f,g)$ is positive semidefinite \cite[\S 2.2]{Krein81}. The entries $b_{ij}$ of the B\'ezout matrix $B(f,g)$ satisfy the identity $$\frac{f(s)g(t)-f(t)g(s)}{s-t}=\sum_{i,j}b_{ij}\cdot s^{i-1}t^{j-1}$$and a straightforward calculation shows that $$b_{11}=\Delta_{e,a}(h)(x),\,\, b_{1d}=b_{d1}=h(e)\cdot\De_a h(x),\,\, b_{dd}=h(e)\cdot\De_a h(e).$$ Since $B(f,g)$ is positive semidefinite, the minor $b_{11}b_{dd}-b_{1d}^2$ is nonnegative. This implies the inequality $\Delta_{e,a}(h)\geq \frac{h(e)}{\De_a h(e)}\cdot (\De_a h)^2$.
\end{rem}

\bibliographystyle{alpha}
\bibliography{biblio}

\def\cprime{$'$}
\begin{thebibliography}{COSW04}

\bibitem[AB18]{amini}
Nima Amini and Petter Br\"{a}nd\'{e}n.
\newblock Non-representable hyperbolic matroids.
\newblock {\em Adv. Math.}, 334:417--449, 2018.

\bibitem[Ami19]{matching}
Nima Amini.
\newblock Spectrahedrality of hyperbolicity cones of multivariate matching
  polynomials.
\newblock {\em J. Algebraic Combin.}, 50(2):165--190, 2019.

\bibitem[BGLS01]{Bauschke}
Heinz~H. Bauschke, Osman G{\"u}ler, Adrian~S. Lewis, and Hristo~S. Sendov.
\newblock Hyperbolic polynomials and convex analysis.
\newblock {\em Canad. J. Math.}, 53(3):470--488, 2001.

\bibitem[Br{\"{a}}07]{petterwron}
Petter Br{\"{a}}nd\'{e}n.
\newblock Polynomials with the half-plane property and matroid theory.
\newblock {\em Adv. Math.}, 216(1):302--320, 2007.

\bibitem[Br{\"a}11]{Bra11}
Petter Br{\"a}nd{\'e}n.
\newblock Obstructions to determinantal representability.
\newblock {\em Adv. Math.}, 226(2):1202--1212, 2011.

\bibitem[Br{\"a}14]{BranEle}
Petter Br{\"a}nd{\'e}n.
\newblock Hyperbolicity cones of elementary symmetric polynomials are
  spectrahedral.
\newblock {\em Optim. Lett.}, 8(5):1773--1782, 2014.

\bibitem[COSW04]{matroids}
Young-Bin Choe, James~G. Oxley, Alan~D. Sokal, and David~G. Wagner.
\newblock Homogeneous multivariate polynomials with the half-plane property.
\newblock volume~32, pages 88--187. 2004.
\newblock Special issue on the Tutte polynomial.

\bibitem[FH91]{fultonharris}
William Fulton and Joe Harris.
\newblock {\em Representation theory}, volume 129 of {\em Graduate Texts in
  Mathematics}.
\newblock Springer-Verlag, New York, 1991.
\newblock A first course, Readings in Mathematics.

\bibitem[G{\.a}r59]{Gar}
Lars G{\.a}rding.
\newblock An inequality for hyperbolic polynomials.
\newblock {\em J. Math. Mech.}, 8:957--965, 1959.

\bibitem[GM16]{knesergraph}
Chris Godsil and Karen Meagher.
\newblock {\em Erd\H{o}s-{K}o-{R}ado theorems: algebraic approaches}, volume
  149 of {\em Cambridge Studies in Advanced Mathematics}.
\newblock Cambridge University Press, Cambridge, 2016.

\bibitem[GW98]{goodwallach}
Roe Goodman and Nolan~R. Wallach.
\newblock {\em Representations and invariants of the classical groups},
  volume~68 of {\em Encyclopedia of Mathematics and its Applications}.
\newblock Cambridge University Press, Cambridge, 1998.

\bibitem[GW14]{wronsos}
Wenbo Gao and David~G Wagner.
\newblock Highly symmetric matroids, the strong {R}ayleigh property, and sums
  of squares.
\newblock {\em arXiv preprint arXiv:1411.7735}, 2014.

\bibitem[HV07]{hv}
J.~William Helton and Victor Vinnikov.
\newblock Linear matrix inequality representation of sets.
\newblock {\em Comm. Pure Appl. Math.}, 60(5):654--674, 2007.

\bibitem[JPW81]{weymanetal}
T.~J\'{o}zefiak, P.~Pragacz, and J.~Weyman.
\newblock Resolutions of determinantal varieties and tensor complexes
  associated with symmetric and antisymmetric matrices.
\newblock In {\em Young tableaux and {S}chur functors in algebra and geometry
  ({T}oru\'{n}, 1980)}, volume~87 of {\em Ast\'{e}risque}, pages 109--189. Soc.
  Math. France, Paris, 1981.

\bibitem[KN81]{Krein81}
M.~G. Kre{\u\i}n and M.~A. Na{\u\i}mark.
\newblock The method of symmetric and {H}ermitian forms in the theory of the
  separation of the roots of algebraic equations.
\newblock {\em Linear and Multilinear Algebra}, 10(4):265--308, 1981.
\newblock Translated from the Russian by O. Boshko and J. L. Howland.

\bibitem[KNP19]{kumnaplau}
Mario Kummer, Simone Naldi, and Daniel Plaumann.
\newblock Spectrahedral representations of plane hyperbolic curves.
\newblock {\em Pacific J. Math.}, 303(1):243--263, 2019.

\bibitem[KPV15]{multiaffine}
Mario Kummer, Daniel Plaumann, and Cynthia Vinzant.
\newblock Hyperbolic polynomials, interlacers, and sums of squares.
\newblock {\em Math. Program.}, 153(1, Ser. B):223--245, 2015.

\bibitem[Kum16]{twores}
Mario Kummer.
\newblock Two results on the size of spectrahedral descriptions.
\newblock {\em SIAM J. Optim.}, 26(1):589--601, 2016.

\bibitem[Kum17]{detbez}
Mario Kummer.
\newblock Determinantal representations and {B}\'{e}zoutians.
\newblock {\em Math. Z.}, 285(1-2):445--459, 2017.

\bibitem[Lon76]{additivecompound}
David London.
\newblock On derivations arising in differential equations.
\newblock {\em Linear and Multilinear Algebra}, 4(3):179--189, 1976.

\bibitem[Ren06]{renegar}
James Renegar.
\newblock Hyperbolic programs, and their derivative relaxations.
\newblock {\em Found. Comput. Math.}, 6(1):59--79, 2006.

\bibitem[RRSW19]{lowerbounds}
Prasad Raghavendra, Nick Ryder, Nikhil Srivastava, and Benjamin Weitz.
\newblock Exponential lower bounds on spectrahedral representations of
  hyperbolicity cones.
\newblock In {\em Proceedings of the {T}hirtieth {A}nnual {ACM}-{SIAM}
  {S}ymposium on {D}iscrete {A}lgorithms}, pages 2322--2332. SIAM,
  Philadelphia, PA, 2019.

\bibitem[San13]{pol1}
Raman Sanyal.
\newblock On the derivative cones of polyhedral cones.
\newblock {\em Adv. Geom.}, 13(2):315--321, 2013.

\bibitem[Sau18]{firstderi}
James Saunderson.
\newblock A spectrahedral representation of the first derivative relaxation of
  the positive semidefinite cone.
\newblock {\em Optim. Lett.}, 12(7):1475--1486, 2018.

\bibitem[SP15]{projspec}
James Saunderson and Pablo~A. Parrilo.
\newblock Polynomial-sized semidefinite representations of derivative
  relaxations of spectrahedral cones.
\newblock {\em Math. Program.}, 153(2, Ser. A):309--331, 2015.

\bibitem[SS20]{sanyal2020spectral}
Raman Sanyal and James Saunderson.
\newblock Spectral polyhedra.
\newblock {\em arXiv preprint arXiv:2001.04361}, 2020.

\end{thebibliography}
 \end{document}